\newcommand{\ZZ}{\mathbb{Z}}
\newcommand{\NN}{\mathbb{N}}
\newcommand{\QQ}{\mathbb{Q}}
\newtheorem{theorem}{Theorem}[section] 
\newtheorem{proposition}[theorem]{Proposition} 
\newtheorem{corollary}[theorem]{Corollary} 
\newtheorem{lemma}[theorem]{Lemma} 
\newtheorem{definition}{Definition}
\newtheorem{remark}[theorem]{Remark} 
\newtheorem{example}[theorem]{Example}
\newcommand{\Ch}{\operatorname{Ch}}
\newcommand{\Int}{\operatorname{Int}}
\newcommand{\Inc}{\operatorname{Inc}}
\newcommand{\rk}{\operatorname{rk}}
\newcommand{\barP}{\overline{P}}
\newcommand{\bott}{\hat{0}}
\newcommand{\topp}{\hat{1}}
\newcommand{\qZ}{\mathsf{Z}}
\newcommand{\qE}{\mathsf{E}}
\newcommand{\qL}{\mathsf{L}}
\newcommand{\ehr}{\operatorname{\textsf{Ehr}}}
\newcommand{\qserieZ}{\mathscr{Z}}
\newcommand{\qserieL}{\mathscr{L}}
\newcommand{\qbinom}[2]{\genfrac{[}{]}{0mm}{1}{#1}{#2}_q}
\newcommand{\HH}{\mathbb{H}}  
\newcommand{\rel}{\stackrel{\heartsuit}{\leftrightarrow}}  
\title{On a q-analogue of the Zeta polynomial of posets}
\author{F. Chapoton}
\date{\today}
\begin{document}

\maketitle

\begin{abstract}
  We introduce a $q$-analogue of the classical Zeta polynomial of
  finite partially ordered sets, as a polynomial in one variable $x$
  with coefficients depending on the indeterminate $q$. We prove some
  properties of this polynomial invariant, including its behaviour
  with respect to duality, product and disjoint union. The leading
  term is a $q$-analogue of the number of maximal chains, but not
  always with non-negative coefficients. The value at $q=0$ turns out
  to be essentially the characteristic polynomial.
\end{abstract}

In the study of finite partially ordered sets (posets), one uses frequently
polynomial invariants. They are useful to distinguish the posets
or recognize them under different disguises, but also for testing the
solidity of our understanding. Computing these invariants may require
a good handle on the combinatorial mechanism behind the scene. This
can lead to structural results. For example, sometimes these
polynomials factor nicely and one would like to understand the reason
for this.

One of the most classical such polynomial is the Zeta polynomial,
whose values at positive integers count chains of elements, introduced
in \cite[\S 3]{stanley1974}. This polynomial is related quite closely
to the order polynomial, introduced in \cite{stanley1970}, which
describes integer points in a polytope naturally attached to the
poset. Yet another polynomial is the characteristic polynomial,
recording the values of the Möbius function, introduced in
\cite{rota1964}. An important but less well-known polynomial is the
Coxeter polynomial, which contains information about the derived
category of modules over the incidence algebra. For more information
on this last topic, see the survey article \cite{delapena1994}





\smallskip

The aim of the present article is to introduce and study a
$q$-analogue of the Zeta polynomial, defined for posets endowed with
a height function. This is a polynomial in the variable $x$, whose
values at $q$-integers count chains according to the sum of heights of
their elements. We will show that it shares many properties of the
Zeta polynomial, to which it reduces when setting $q=1$.

\smallskip

The main motivation for this definition comes from the $q$-analogue of
Ehrhart theory introduced in \cite{q-ehrhart}, where the objects of
study are lattice polytopes together with a linear form. It is
well-known that the usual Zeta polynomial of the distributive lattice
$J(P)$ of lower ideals in a poset $P$ is equal to the order polynomial
of the partial order $P$, and therefore to the Ehrhart polynomial of
the order polytope of $P$ introduced in
\cite{stanley_two_polytopes}. The present article started with the aim
to understand what happens to this story when one replaces the Ehrhart
polynomial of $P$ by the $q$-Ehrhart polynomial. The conclusion is
that everything works nicely, and a rather clean theory can be
established.


The construction is mostly as expected, with the appropriate changes
and dependencies on the choice of the height function on the
poset. There is one unexpected property, namely the value of the
$q$-Zeta polynomial at $q=0$ can under some conditions be identified
with the characteristic polynomial.



\smallskip

The reader should be warned that, in some sense, a large part of the
surrounding context and proofs are not new and indeed very classical
since the pioneering works of Stanley on $P$-partitions. The precise
relation of $q$-Ehrhart polynomials with the theory of $P$-partitions
has been described in \cite[\S 4]{q-ehrhart}. A nice reference on the
history of the subject of $P$-partitions can be found in
\cite{gessel-P}, in which the connection between $P$-partitions and
Zeta polynomials appears in \S 8.1.

The study of Ehrhart generating series is the main focus of the theory
of $P$-partitions. From this perspective, the $q$-Zeta polynomials are
just describing in another compact way the coefficients of these
Ehrhart series. This is a slightly different viewpoint, which offers
other insights.

\smallskip

The article is organized as follows. After recalling briefly the
classical Zeta polynomial, the $q$-Zeta polynomial is defined and some
examples are given in Section \ref{sect1}. In Section \ref{sect2},
some basic properties are proved. In Section \ref{sect3}, the
relationship, in the case of distributive lattices, with the $q$-order
polynomial is spelled out. In Section \ref{sect4}, a property relating
the leading coefficient of the $q$-Zeta polynomial and the numerator
of the Ehrhart series is proved. In Section \ref{sect5}, a known
criterion is recalled for the positivity of the coefficients in the
numerator of the Ehrhart series. Section \ref{sect6} describes the
unexpected relationship, at $q=0$, with the characteristic
polynomial. Section \ref{sect7} proves a $q$-analogue of a reciprocity
theorem for Zeta polynomials of Eulerian posets. The article ends with
three appendices : \ref{app:flags} on classical results on flag
vectors, positivity and $R$-labellings, \ref{app:q-ring} on
$q$-analogues of polynomials with integer values and
\ref{app:q-incidence} on a $q$-analogue of the incidence algebra.

\smallskip

I would like to humbly dedicate this article to Richard Stanley, with my sincere admiration.

Acknowledgments: Many thanks to the careful referee that suggested to add the material of Section \ref{sect7}.




\smallskip

The author has made use of SageMath \cite{sage} in the preparation of this article.

The author has been supported by the ANR Combiné (ANR-19-CE48-0011) and ANR Charms (ANR-19-CE40-0017).

\section*{Introduction}

Let us first recall briefly the classical Zeta polynomial of posets.

Let $P$ be a finite poset. For every integer $n \geq 2$, consider the
set $C_n(P)$ of chains $e_1 \leq e_2 \leq \cdots \leq e_{n-1}$ in
$P$. Then the cardinality of $C_n(P)$ is given by the value at $n$ of
a polynomial $Z_P(x)$, called the Zeta polynomial of $P$.

One can easily prove this fact by the following computation.

For $k\geq 1$, let $\Ch_k(P)$ be the set of strict chains with $k$
elements in $P$, \textit{i.e.} sequences $c_1 < \cdots < c_k$.

By gathering chains according to their underlying strict chain and multiplicities, one finds that
\begin{equation}
  \label{classical_Z1}
  Z_P(n) = \sum_{e_1 \leq \cdots \leq e_{n-1}} 1
        = \sum_{k \geq 1} \left( \sum_{\stackrel{m_1,\ldots,m_k \geq 1}{\sum_i m_i=n-1}} 1 \right) \# \Ch_k.
\end{equation}
But the inner sum is just the binomial coefficient $\binom{n-2}{k-1}$, so that
\begin{equation}
  \label{classical_Z2}
  Z_P(n) = \sum_{k \geq 1} \binom{n-2}{k-1} \# \Ch_k,
\end{equation}
which is obviously a polynomial evaluated at $n$.


The Zeta polynomial is a useful invariant of posets, with nice general
properties. By Formula \eqref{classical_Z2}, it belongs to the ring of
polynomials with rational coefficients taking integer values on
$\NN$. The Zeta polynomial is multiplicative with respect to the
Cartesian product of posets, additive with respect to disjoint union
and invariant under duality. For more on this classical subject, see
for instance \cite[\S 3.11]{stanleyEC1}.

\section{$q$-analogue}

\label{sect1}

Let us now turn to our proposal for a $q$-analogue of the Zeta polynomial.

The letter $q$ stands for the indeterminate in $\QQ(q)$. We use the
standard notation $[n]_q = (q^n-1)/(q-1)$ for the $q$-analogue of the integer $n \in \ZZ$, which is a polynomial in $q$ if $n \geq 0$ and a Laurent polynomial otherwise. We will denote
by $[n]!_q$ the $q$-factorial of $n$ when $n \geq 0$. Note that
$[0]_q=0$ and $[1]_q=1$.

\smallskip

Let $P$ be a finite poset. We need the additional data of a height function
$h : P \to \NN$ such that $h(x) < h(y)$ for every cover relation
$x < y$ in $P$.

A poset $P$ is \textit{graded} if there exists a height function that
increase by $1$ along every cover relation. Then there is a preferred
choice for such a height function $h$, by assuming further that it
has minimal value $0$ on every connected component of $P$. This
specific height function will be denoted by $\rk$.

Every poset $P$ can be endowed with a height function by choosing an
arbitrary linear extension and using it as a height function.

Note that some posets that are not graded are nevertheless naturally
endowed with natural height functions, for example the Tamari lattices
using their description as posets of tilting modules \cite{thomas_tamari}.

\medskip

For a strict chain $c=(c_1,\ldots,c_{k})$ in $\Ch_k(P)$, let $h(c)$
denote the sequence of heights $(h(c_1),\ldots,h(c_{k}))$ and let
$\sum h(c)$ denote the sum of this sequence. The hypothesis on $h$
ensures that the sequence $h(c)$ is strictly increasing.

For every integer $n \geq 2$, let us define
\begin{equation}
  \label{qZ_sum_over_chains}
  \qZ_{P,h}([n]_q) = \sum_{e_1 \leq \dots \leq e_{n-1}} q^{\sum_j h(e_j)}
        = \sum_{k \geq 1} \sum_{c \in \Ch_k(P)} \sum_{\stackrel{m_1,\ldots,m_k\geq 1}{\sum_i m_i=n-1}} q^{\sum_i m_i h(c_i)}.
\end{equation}
For the moment, the left hand side is an abuse of notation, as we will
only prove later that the right hand side is indeed the evaluation of
a polynomial at $[n]_q$. The reader can safely but temporarily assume
that the left-hand side just means some function of $n$ and $q$
depending of $P$ and $h$.

This can be rewritten as
\begin{equation}
  \label{qZ_step}
  \qZ_{P,h}([n]_q)
        = \sum_{k \geq 1} \sum_{c \in \Ch_k(P)} q^{\sum h(c)}\sum_{\stackrel{m'_1,\ldots,m'_k \geq 0}{\sum_i m'_i=n-k-1}} q^{\sum_i m'_i h(c_i)}.
\end{equation}
At this point, one can recognize the innermost sum as being
essentially the $q$-Ehrhart polynomial of a simplex. Before proceeding
further, let us recall the theory of $q$-Ehrhart polynomials, whose
details can be found in \cite{q-ehrhart}.

\subsection{$q$-Ehrhart polynomials and reciprocity}

Fix a lattice polytope $Q$ and an integral linear form $\ell$ on the ambient
lattice. Assume that $\ell$ is not constant on any edge
of $Q$ and takes values in $\NN$ on $Q$. Then there exists a unique
polynomial $\ehr_{Q,\ell}(x) \in \QQ(q)[x]$ such that
\begin{equation}
  \ehr_{Q,\ell}([n]_q) = \sum_{z \in n Q} q^{\ell(z)},
\end{equation}
for every integer $n \geq 0$.  In words, this is counting lattice
points in the dilates of $Q$ according to the value of $\ell$ on each
lattice point. By the formula above, the polynomial $\ehr_{Q,\ell}$
belongs to the ring of polynomials whose value at every $q$-integer
$[n]_q$ with $n \geq 0$ is a polynomial in $q$ with positive integer
coefficients. This is called the $q$-Ehrhart polynomial of the lattice
polytope $Q$ with respect to the linear form $\ell$. Note that setting
$q=1$ recovers the classical Ehrhart polynomial of lattice polytopes,
not depending on the linear form $\ell$. The degree of $\ehr_{Q,\ell}$
is the maximum value of $\ell$ on $Q$.

Let us also recall Ehrhart reciprocity in this setting. Let $n \geq 1$ be an
integer. Then the evaluation of $\ehr_{Q,\ell}$ at $[-n]_q$ is given
up to sign by the similar sum over interior points in the dilates of $Q$:
\begin{equation}
  \ehr_{Q,\ell}([-n]_q) = (-1)^d \sum_{z \in \Int(n Q)} q^{-\ell(z)},
\end{equation}
where $d$ is the dimension of the polytope $Q$ and $\Int$ denotes the
interior of a polytope. Note that the interior of a $0$-dimensional
polytope (a point) is just itself.

\medskip

For $k\geq 1$, the lattice polytope in $\NN^k$ whose vertices are the
basis vectors in $\NN^k$ will be called the standard basic simplex.

For every $k$-tuple $a$ of distinct elements of $\NN$, let us denote
by $\qE_{a}$ the $q$-Ehrhart polynomial of the standard basic simplex
in $\NN^k$ with respect to the linear form given by the standard
scalar product with $a$. The degree of $\qE_{a}$ is the maximal
element of $a$. At $q=1$, $\qE_{a}$ becomes the Ehrhart polynomial of
the standard basic simplex, namely $\binom{x+k-1}{k-1}$, no longer
depending on $a$.

For example, for the tuple $a=(1,2,3)$, one gets
\begin{equation*}
  \qE_a = (\left(q - 1\right) x + 1) \cdot (q x + 1) \cdot \frac{(q^{2} x + q + 1)}{q + 1}.
\end{equation*}

\subsection{Definition of $q$-Zeta polynomial}

Let us go back to our proposed definition \eqref{qZ_step} for $\qZ_{P,h}$. One therefore finds that
\begin{equation}
  \label{qZ_almost}
  \qZ_{P,h}([n]_q)
        = \sum_{k \geq 1} \sum_{c \in \Ch_k(P)} q^{\sum h(c)} \qE_{h(c)}([n-k-1]_q).
\end{equation}
Here one has to be a little cautious about the substitution of the
inner summation in \eqref{qZ_step} by the $q$-Ehrhart polynomial
$\qE_{h(c)}$. This is a priori allowed only if $n \geq k+1$. On the
one hand, if $2 \leq n \leq k$, then the inner summation in
\eqref{qZ_step} vanishes because it runs over an empty set. On the
other hand, by Ehrhart reciprocity, the value $\qE_{h(c)}([-d]_q)$
vanishes when $1 \leq d \leq k-1$, because in this case the $d$-th
dilate of the standard basic simplex with $k$ vertices has empty interior.

By the appropriate shift of variables relating $[n]_q$ and
$[n-k-1]_q$, one finally reaches the following definition, consistent
with all previous formulas.
\begin{definition}
  The $q$-Zeta polynomial $\qZ_{P,h}$ of a finite poset $P$ with respect to the height
  function $h$ is given by
  \begin{equation}
    \label{def_qZ}
    \qZ_{P,h}(x) = \sum_{k \geq 1}\sum_{c\in \Ch_k(P)} q^{\sum h(c)} \qE_{h(c)}\left(\frac{x-[k+1]_q}{q^{k+1}}\right).
  \end{equation}
  This polynomial is an element of the ring $\QQ(q)[x]$. Its degree is
  the maximal value of the height function $h$ on $P$. It
  belongs to the sub-ring of polynomials whose values at every
  q-integer $[n]_q$ with $n \geq 2$ is a polynomial in $q$ with positive integer
  coefficients.
\end{definition}
The first and second properties are clear from Formula \eqref{def_qZ} and the general
properties of the $q$-Ehrhart polynomials. The third property follows directly from \eqref{qZ_step}, which holds by construction, as well as \eqref{qZ_sum_over_chains}.

In other words, by \eqref{qZ_sum_over_chains}, one has the following statement:
\begin{lemma}
  The values of $\qZ_{P,h}$ at $q$-integers $[n]_q$ for $n \geq 2$ are
  $q$-analogues of the numbers of chains $e_1 \leq \dots \leq e_{n-1}$
  in $P$, where the power of $q$ is the sum of the heights of elements
  in the chain.
\end{lemma}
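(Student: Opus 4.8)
The plan is to unwind the definitions. Formulas \eqref{qZ_sum_over_chains}, \eqref{qZ_step} and \eqref{qZ_almost} all hold by construction, since they are exactly the steps leading to the Definition, so it suffices to verify two things: that substituting $x=[n]_q$ into the right-hand side of \eqref{def_qZ} reproduces \eqref{qZ_almost}, and that \eqref{qZ_almost} and \eqref{qZ_sum_over_chains} agree term by term — in particular that the right-hand side contributes nothing in the degenerate range where a chain sum is empty.

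For the first point I would use the elementary identity $[n]_q-[k+1]_q = (q^n-q^{k+1})/(q-1) = q^{k+1}\,[n-k-1]_q$, valid for all integers $n$ and $k$, so that the argument $(x-[k+1]_q)/q^{k+1}$ of \eqref{def_qZ} equals exactly $[n-k-1]_q$ once $x=[n]_q$; this yields $\qZ_{P,h}([n]_q)=\sum_{k\ge1}\sum_{c\in\Ch_k(P)} q^{\sum h(c)}\,\qE_{h(c)}([n-k-1]_q)$, which is \eqref{qZ_almost}. For the second point I would split the outer sum at $k=n-1$. When $k\le n-1$ the integer $n-k-1$ is $\ge0$, so by the defining property of the $q$-Ehrhart polynomial of the standard basic simplex in $\NN^k$, the value $\qE_{h(c)}([n-k-1]_q)$ is the sum of $q^{\sum_i m'_i h(c_i)}$ over all $m'_1,\dots,m'_k\ge0$ with $\sum_i m'_i=n-k-1$; multiplying by $q^{\sum h(c)}$ and reindexing $m_i=m'_i+1$ turns this into $\sum_{m_1,\dots,m_k\ge1,\ \sum_i m_i=n-1} q^{\sum_i m_i h(c_i)}$, which is precisely the contribution of length-$k$ chains to \eqref{qZ_sum_over_chains}.

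When $k\ge n$, the corresponding terms of \eqref{qZ_sum_over_chains} are empty sums, since $n-1$ has no composition into $k$ positive parts; and the matching terms on the right of \eqref{qZ_almost} vanish too, because writing $n-k-1=-(k+1-n)$ with $1\le k+1-n\le k-1$ (here one uses $n\ge2$), Ehrhart reciprocity identifies $\qE_{h(c)}([n-k-1]_q)$, up to sign, with a sum over the interior lattice points of the $(k+1-n)$-th dilate of the $(k-1)$-dimensional standard basic simplex, and that interior is empty — exactly the observation already recorded just before the Definition. Collecting the surviving terms over all $k$ then gives $\qZ_{P,h}([n]_q)=\sum_{e_1\le\dots\le e_{n-1}} q^{\sum_j h(e_j)}$, and the boundary case $k=n-1$ (where $\qE_{h(c)}$ is evaluated at $[0]_q=0$ and returns its constant term $1$) matches the unique chain all of whose multiplicities are $1$. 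The only step that is not pure definition-chasing is this last vanishing through reciprocity, but since it has already been carried out in the text, I expect no real obstacle; the lemma is in essence a repackaging of the construction of $\qZ_{P,h}$.
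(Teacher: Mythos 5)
Your proof is correct and follows the same route as the paper: the paper establishes this lemma by exactly the chain of identities \eqref{qZ_sum_over_chains}--\eqref{qZ_almost} together with the shift $[n]_q-[k+1]_q=q^{k+1}[n-k-1]_q$ and the two-case check (empty composition sets for $2\le n\le k$ versus vanishing by Ehrhart reciprocity from empty interiors), which it carries out in the discussion preceding the Definition and then cites. You have merely written out in full the details the paper leaves implicit, so there is nothing to add.
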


As expected, the polynomial $\qZ_{P,h}$ deserves the name of
$q$-analogue of the Zeta polynomial.
\begin{lemma}
  For any height function $h$, the specialisation of $\qZ_{P,h}$ at $q=1$ is the usual Zeta polynomial $Z_P$ of the poset $P$.
\end{lemma}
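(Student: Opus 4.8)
The plan is to set $q=1$ directly in the defining formula~\eqref{def_qZ} and check that each ingredient degenerates to the corresponding ingredient of the classical formula~\eqref{classical_Z2}. The cleanest route is to argue that the substitution $q=1$ is legitimate (no poles appear) and then match the three factors $q^{\sum h(c)}$, $\qE_{h(c)}$, and the argument $(x-[k+1]_q)/q^{k+1}$ one by one.

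First I would note that $q^{\sum h(c)}\big|_{q=1}=1$, so this factor disappears, exactly as the height data disappears from the classical Zeta polynomial. Next, by the remark recalled just before the definition of $\qE_a$, at $q=1$ the polynomial $\qE_{h(c)}$ specialises to the Ehrhart polynomial of the standard basic simplex in $\NN^k$, namely $\binom{x+k-1}{k-1}$, independently of the tuple $h(c)$; here $k$ is the length of the chain $c$. Finally, since $[m]_q\big|_{q=1}=m$ for every integer $m$, the argument $(x-[k+1]_q)/q^{k+1}$ specialises to $x-(k+1)$. Substituting, the $c$-summand becomes $\binom{(x-(k+1))+k-1}{k-1}=\binom{x-2}{k-1}$, which depends on $c$ only through its length $k$. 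Summing over $c\in\Ch_k(P)$ contributes a factor $\#\Ch_k(P)$, and summing over $k\geq 1$ yields $\sum_{k\geq 1}\binom{x-2}{k-1}\#\Ch_k(P)$, which is precisely $Z_P(x)$ by~\eqref{classical_Z2}.

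The only point requiring a little care is the legitimacy of evaluating at $q=1$: a priori $\qZ_{P,h}\in\QQ(q)[x]$, so one must know that no denominator vanishes at $q=1$. This is not really an obstacle, since each $\qE_a$ lies in the ring of polynomials taking, at every $q$-integer, values in $\NN[q]$ (stated in the excerpt), and in particular its coefficients have no pole at $q=1$; equivalently, one may invoke that $\qZ_{P,h}$ evaluated at $[n]_q$ counts chains weighted by $q^{\sum h}$, a Laurent polynomial in $q$ that is manifestly regular at $q=1$ with value the plain chain count $Z_P(n)$, so the two polynomials in $x$ agree at infinitely many integers $n\geq 2$ and hence coincide. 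I would present the computation via~\eqref{def_qZ} as the main argument and mention this evaluation-at-$[n]_q$ viewpoint as an alternative sanity check. The main (very mild) obstacle is thus bookkeeping the shift $x\mapsto x-(k+1)$ correctly so that the binomial comes out as $\binom{x-2}{k-1}$ rather than something off by one.
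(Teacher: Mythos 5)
Your proposal is correct, but it takes a somewhat different route from the paper. The paper's proof is a one-liner: it compares \eqref{classical_Z1} with \eqref{qZ_sum_over_chains}, i.e.\ it observes that the value of $\qZ_{P,h}$ at $[n]_q$ specialises at $q=1$ to the chain count $Z_P(n)$, and (implicitly) that two polynomials agreeing at the infinitely many points $n\ge 2$ coincide. You instead specialise the closed formula \eqref{def_qZ} coefficient by coefficient and match the result against \eqref{classical_Z2}: the weight $q^{\sum h(c)}$ becomes $1$, each $\qE_{h(c)}$ degenerates to $\binom{x+k-1}{k-1}$, and the shifted argument becomes $x-(k+1)$, giving $\binom{x-2}{k-1}$ summed against $\#\Ch_k(P)$; the arithmetic checks out. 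What your route buys is that it makes explicit the one genuinely delicate point, namely that evaluating an element of $\QQ(q)[x]$ at $q=1$ requires the coefficients to be regular there; the paper's proof silently passes over this. Your justification of regularity via the integrality of values at $q$-integers is a little terse (one should say why value-integrality forces coefficient-regularity, e.g.\ by interpolation at the nodes $[0]_q,\dots,[d]_q$, whose Vandermonde determinant does not vanish at $q=1$), but your fallback argument --- agreement of the two polynomials at all integers $n\ge 2$ --- is exactly the paper's proof and closes any gap. So the proposal is sound; it is essentially the paper's argument plus a more explicit, definition-level verification.
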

\begin{proof}
  This follows by comparing \eqref{classical_Z1} and \eqref{qZ_sum_over_chains}.
\end{proof}


\subsection{Examples}

Let us now give a few examples, using the height function $\rk$ coming from
the grading of the posets, unless stated otherwise.

\begin{example}
  \label{ex1}
  For the unique poset $\circ$ with one element with respect to $\rk$,
  the $q$-Zeta polynomial is $1$. More generally, for the height
  function on $\circ$ with value $H \in \NN$, the value of the $q$-Zeta
  polynomial at $[n]_q$ is $q^{(n-1)H}$ and therefore
  \begin{equation}
    \qZ_{\circ,H}(x) = \left(\frac{1+(q-1)x}{q}\right)^H.
  \end{equation}
\end{example}

\begin{example}
  \label{ex2}
  For the total order on $d \geq 2$ elements, one finds
  \begin{equation}
    \frac{\prod_{j=0}^{d-2}([j]_q+q^j x)}{[d-1]!_q}.
  \end{equation}
  This follows from the identification of the value at $[n]_q$, using the
  Formula \eqref{qZ_sum_over_chains} as a weighted sum over all chains,
  with the standard $q$-binomial coefficient counting lattice paths in a
  $(n-1)\times(d-1)$ rectangle according to the area below.
\end{example}

\begin{example}
  \label{ex3}
  For the graded poset on 5 elements with one minimum $\bott$, one
  maximum $\topp$ and 3 pairwise incomparable elements in between, one finds
  \begin{equation}
    \frac{x \cdot (\left(q + 2\right) x - 1)}{q + 1}.
  \end{equation}
  This was found using a computer. By hand, it can be computed by
  interpolation, as the degree is known to be $2$. This is made easier
  by the statements about the values at $[0]_q$ and $[1]_q$ given below in \Cref{valeur_0} and \Cref{valeur_1}.
\end{example}

\begin{example}
  \label{ex4}
  Let us also consider the poset on $\{a,b,c,d\}$ where $a$ and $b$ are
  both smaller than both $c$ and $d$. One obtains
  \begin{equation}
    \frac{2(q + 1)}{q} (x - 1),
  \end{equation}
  whose value at $[1]_q$ is $0$.
\end{example}

\begin{example}
  \label{ex5}
  For the poset on $\{a,b,c\}$ where $a$ is smaller than $b$ and $c$, one obtains
  \begin{equation}
    2 x-1,
  \end{equation}
  whereas for the dual poset one gets
  \begin{equation}
    \frac{(\left(q + 1\right) x - 1)}{q}.
  \end{equation}
\end{example}

\begin{example}
  \label{ex6}
  For the poset on $\{a,b,c,d,e\}$ where $a \leq b, a\leq c, b\leq d, c\leq e$, one obtains
  \begin{equation}
    \frac{2q x^{2} + 2 x - q - 1}{q + 1}.
  \end{equation}
\end{example}


\section{Basic properties}

\label{sect2}

Let us present in this section a few basic properties of $\qZ_{P,h}$.

\smallskip

Let $P$ and $Q$ be two posets with height functions $g$ and
$h$. Consider $P\times Q$ with the height function $g+h$ sending
$(a,b)$ to $g(a)+h(b)$. Consider also $P \sqcup Q$ with the height
function $g\sqcup h$ defined by $g$ on $P$ and $h$ on $Q$.

\begin{lemma}
  With the notations above, $\qZ_{P\times Q, g+h} = \qZ_{P,g} \qZ_{Q,h}$ and
  $\qZ_{P \sqcup Q, g \sqcup h} = \qZ_{P,h} + \qZ_{Q,h}$.
\end{lemma}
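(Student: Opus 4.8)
The plan is to work directly from the defining formula \eqref{qZ_sum_over_chains}, since both claimed identities are statements about the values $\qZ_{P,h}([n]_q)$ at all integers $n \geq 2$, and a polynomial in $\QQ(q)[x]$ is determined by its values at the infinitely many points $[n]_q$. So it suffices to check each identity after evaluating at $[n]_q$ for every $n \geq 2$.

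For the product, I would start from $\qZ_{P \times Q, g+h}([n]_q) = \sum_{(e_1,f_1) \leq \cdots \leq (e_{n-1},f_{n-1})} q^{\sum_j (g(e_j) + h(f_j))}$, where the sum runs over chains in $P \times Q$. The key observation is that a chain $(e_1,f_1) \leq \cdots \leq (e_{n-1},f_{n-1})$ in the product poset is \emph{exactly} a pair consisting of a chain $e_1 \leq \cdots \leq e_{n-1}$ in $P$ and a chain $f_1 \leq \cdots \leq f_{n-1}$ in $Q$ (both of the same length $n-1$), because $(a,b) \leq (a',b')$ in $P\times Q$ iff $a \leq a'$ and $b \leq b'$. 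Hence the sum factors:
\begin{equation*}
  \sum_{(e_j,f_j)} q^{\sum_j g(e_j) + \sum_j h(f_j)} = \Big(\sum_{e_1 \leq \cdots \leq e_{n-1}} q^{\sum_j g(e_j)}\Big)\Big(\sum_{f_1 \leq \cdots \leq f_{n-1}} q^{\sum_j h(f_j)}\Big) = \qZ_{P,g}([n]_q)\,\qZ_{Q,h}([n]_q),
\end{equation*}
and since this holds for all $n \geq 2$, the polynomial identity $\qZ_{P\times Q, g+h} = \qZ_{P,g}\,\qZ_{Q,h}$ follows. (One should double-check the $q$-exponent bookkeeping: the height of $(e_j,f_j)$ under $g+h$ is $g(e_j)+h(f_j)$, so the total exponent is indeed $\sum_j g(e_j) + \sum_j h(f_j)$, which splits cleanly.)

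For the disjoint union, again evaluate at $[n]_q$. A chain $e_1 \leq \cdots \leq e_{n-1}$ in $P \sqcup Q$ must lie entirely in $P$ or entirely in $Q$, since no element of $P$ is comparable to any element of $Q$. Therefore $C_n(P \sqcup Q)$ is the disjoint union of $C_n(P)$ and $C_n(Q)$, and the height of each element is unchanged, giving $\qZ_{P \sqcup Q, g \sqcup h}([n]_q) = \qZ_{P,g}([n]_q) + \qZ_{Q,h}([n]_q)$ for all $n \geq 2$, hence the polynomial identity. (I note in passing that the statement as printed has a typo, writing $h$ where it should read $g$ on the $P$-summand; the correct statement is $\qZ_{P \sqcup Q, g \sqcup h} = \qZ_{P,g} + \qZ_{Q,h}$.)

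I do not expect any serious obstacle here: the whole argument is the observation that chains in a product (resp.\ disjoint union) of posets are pairs of chains (resp.\ chains in one piece), combined with the fact that a polynomial is pinned down by its values at $\{[n]_q : n \geq 2\}$. The only place demanding a little care is making sure the evaluation-at-$[n]_q$ bookkeeping is legitimate, i.e.\ that \eqref{qZ_sum_over_chains} genuinely records $\qZ_{P,h}([n]_q)$ for all $n \geq 2$ — but this is exactly the content established when the definition was set up, so it may be cited. An alternative, more algebraic route would be to manipulate the defining sum \eqref{def_qZ} over strict chains directly, using that $\Ch_k(P \times Q)$ relates to the $\Ch_i(P) \times \Ch_j(Q)$; but this drags in the convolution identities for the polynomials $\qE_a$ and is strictly more painful, so I would avoid it in favor of the chain-counting argument above.
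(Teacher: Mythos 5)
Your proof is correct and follows exactly the paper's own argument: evaluate via Formula \eqref{qZ_sum_over_chains} at each $[n]_q$, identify chains in $P\times Q$ with pairs of chains (so the weighted sum factors) and chains in $P\sqcup Q$ with chains lying entirely in one part (so the sum splits). Your observation that the printed statement has a typo ($\qZ_{P,h}$ should read $\qZ_{P,g}$ on the $P$-summand of the disjoint union) is also correct.
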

\begin{proof}
  This is most easily seen using Formula \eqref{qZ_sum_over_chains}.

  A chain $e$ in the Cartesian product $P \times Q$ is the same as a
  pair of chains $(e',e'')$ in $P$ and $Q$. The sum over $e$ of the
  heights in $P \times Q$ is that of $e'$ in $P$ plus that of $e''$ in
  $Q$.

  For the disjoint union $P\sqcup Q$, chains are either entirely in $P$ or
  entirely in $Q$ and the result follows.
\end{proof}

\medskip

For a poset $P$, let $\chi_P$ denote the Euler characteristic of the
order complex of $P$, which is the simplicial complex made of strict
chains. Recall that if $P$ has a unique minimum or maximum, the order
complex is contractible.

\begin{lemma}
  \label{valeur_1}
  The value of $\qZ_{P,h}$ at $[1]_q$ is $\chi_P$. In particular,
  if $P$ has a unique minimum or maximum, this is $1$.
\end{lemma}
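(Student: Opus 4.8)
The plan is to evaluate the defining formula \eqref{def_qZ} at $x = [1]_q = 1$ and track what survives. The key observation is that substituting $x = [1]_q$ into the shifted argument $\tfrac{x - [k+1]_q}{q^{k+1}}$ produces $\tfrac{[1]_q - [k+1]_q}{q^{k+1}}$, and since $[k+1]_q - [1]_q = q + q^2 + \cdots + q^k = q[k]_q$, this equals $\tfrac{-q[k]_q}{q^{k+1}} = \tfrac{-[k]_q}{q^k}$. Under the change of variables that turns $\qE_a$ back into a genuine Ehrhart-type evaluation (the same shift used to pass from \eqref{qZ_step} to \eqref{def_qZ}), this corresponds precisely to evaluating $\qE_{h(c)}$ at $[-(k-1)]_q$, i.e. the point where Ehrhart reciprocity applies to the $(k-1)$-st dilate of the standard basic simplex with $k$ vertices.

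Next I would invoke Ehrhart reciprocity as recalled in the excerpt: $\qE_{h(c)}([-(k-1)]_q) = (-1)^{k-1} \sum_{z \in \Int((k-1)Q)} q^{-\ell(z)}$, where $Q$ is the standard basic simplex in $\NN^k$. The crucial point is that the $(k-1)$-st dilate of a $(k-1)$-dimensional simplex with $k$ lattice-point vertices has exactly one interior lattice point — the barycentre-type point $(1,1,\ldots,1)$ — so this sum has a single term. Evaluating $\ell = h(c) \cdot (\text{that point})$ gives $\sum_i h(c_i) = \sum h(c)$, so $\qE_{h(c)}(\ldots) = (-1)^{k-1} q^{-\sum h(c)}$. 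Multiplying by the prefactor $q^{\sum h(c)}$ in \eqref{def_qZ}, every summand collapses to $(-1)^{k-1}$, independent of $q$ and of the particular chain. Therefore
\begin{equation*}
  \qZ_{P,h}([1]_q) = \sum_{k \geq 1} (-1)^{k-1} \#\Ch_k(P),
\end{equation*}
which is exactly the reduced-then-unreduced Euler characteristic $\chi_P$ of the order complex (chains of size $k$ are the $(k-1)$-dimensional faces, so the signs $(-1)^{k-1}$ give the Euler characteristic, with the convention making $\chi$ of a point equal to $1$). The "in particular" clause then follows from the standard fact, recalled just above the lemma, that the order complex of a poset with a unique minimum or maximum is a cone, hence contractible, so $\chi_P = 1$.

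The main obstacle I anticipate is purely bookkeeping: getting the variable shift exactly right so that $x = [1]_q$ really lands on $[-(k-1)]_q$ in the reciprocity formula, and confirming that the dilate in question is the $(k-1)$-st (not the $k$-th or $(k-2)$-nd) — the relevant simplex has $k$ vertices and dimension $k-1$, and one needs $(k-1)$ to be the smallest dilation factor admitting an interior lattice point, which is consistent with the remark in the excerpt that $\qE_{h(c)}([-d]_q)$ vanishes for $1 \le d \le k-1$ except it is the boundary case $d = k-1$ that is nonzero here. Wait — I must double-check the excerpt's claim that it vanishes for $1 \le d \le k-1$; if so, the surviving dilate is the $k$-th, and then $x=[1]_q$ must correspond to $[-k]_q$ rather than $[-(k-1)]_q$, with the unique interior point of $kQ$ again being $(1,\ldots,1)$ but now $\ell$-value still $\sum h(c_i)$. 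Reconciling this indexing against $[k+1]_q - [1]_q = q[k]_q$ is the one delicate step; everything else is a direct substitution.
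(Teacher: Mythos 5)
Your proposal follows essentially the same route as the paper: evaluate at $x=[1]_q$ so that each summand of \eqref{def_qZ} becomes $q^{\sum h(c)}\qE_{h(c)}([-k]_q)$, apply Ehrhart reciprocity using the unique interior lattice point $(1,\ldots,1)$ of the $k$-th dilate of the $(k-1)$-dimensional standard basic simplex to get $(-1)^{k-1}q^{-\sum h(c)}$, cancel the $q$-powers, and recognize $\sum_{k\geq 1}(-1)^{k-1}\#\Ch_k(P)$ as the Euler characteristic of the order complex. The indexing slip in the middle of your write-up --- asserting that $\bigl([1]_q-[k+1]_q\bigr)/q^{k+1}=-[k]_q/q^{k}$ corresponds to $[-(k-1)]_q$ --- is resolved correctly by your own closing paragraph: one has $-[k]_q/q^{k}=[-k]_q$, the $(k-1)$-st dilate has empty relative interior (consistent with the vanishing of $\qE_{h(c)}([-d]_q)$ for $1\leq d\leq k-1$), and the surviving dilate is the $k$-th; with that fix your argument coincides with the paper's proof.
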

\begin{proof}
  Let us first compute $\qE_{h(c)}([-k]_q)$ for $k \geq 1$ and
  $c\in \Ch_k(P)$. By Ehrhart reciprocity,
  $\qE_{h(c)}([-k]_q) = (-1)^{k-1} q^{-\sum h(c)}$ because the only
  interior point in the $k$-th dilate of the standard basic simplex is
  $(1,\ldots,1)$. Then using Formula \eqref{qZ_almost}, one finds
  \begin{equation*}
   \qZ_{P,h}([1]_q) =  \sum_{k \geq 1} (-1)^{k-1} \# \Ch_k(P).
  \end{equation*}
  This is exactly the
  expected Euler characteristic.
\end{proof}

\begin{lemma}
  \label{valeur_0}
  Assuming that $P$ has a unique minimum $\bott$, the value of
  $\qZ_{P,h}$ at $[0]_q$ is
  $q^{-h(\bott)} (1 - \chi_{P\setminus \bott})$.  In particular, if
  $P$ also has a unique maximum $\topp$ distinct from $\bott$, this is $0$.
\end{lemma}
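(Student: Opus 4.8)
The plan is to mimic the computation in \Cref{valeur_1}, evaluating the defining formula \eqref{qZ_almost} at $[0]_q$, i.e.\ taking $n=0$ so that the argument of each $\qE_{h(c)}$ becomes $[n-k-1]_q = [-(k+1)]_q$. First I would apply Ehrhart reciprocity to compute $\qE_{h(c)}([-(k+1)]_q)$ for a chain $c\in\Ch_k(P)$: since $\qE_{h(c)}$ is the $q$-Ehrhart polynomial of the standard basic simplex on $k$ vertices (a polytope of dimension $d=k-1$), reciprocity gives
\begin{equation*}
  \qE_{h(c)}([-(k+1)]_q) = (-1)^{k-1} \sum_{z \in \Int((k+1)\Delta_k)} q^{-\langle h(c),z\rangle}.
\end{equation*}
The interior lattice points of the $(k+1)$-st dilate of the standard basic simplex in $\NN^k$ are the tuples of $k$ strictly positive integers summing to $k+1$, i.e.\ the $k$ permutations of $(2,1,\dots,1)$; the one with the $2$ in position $i$ contributes $q^{-(\sum h(c) + h(c_i))}$. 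Hence $\qE_{h(c)}([-(k+1)]_q) = (-1)^{k-1} q^{-\sum h(c)} \sum_{i=1}^{k} q^{-h(c_i)}$.

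Plugging this into \eqref{qZ_almost} (with the appropriate shift so that the argument corresponds to $[0]_q$), the factors $q^{\sum h(c)}$ cancel and one obtains
\begin{equation*}
  \qZ_{P,h}([0]_q) = \sum_{k\geq 1} (-1)^{k-1} \sum_{c\in\Ch_k(P)} \sum_{i=1}^{k} q^{-h(c_i)}.
\end{equation*}
Now I would reorganize this sum by the element $c_i$ of the chain that receives the extra weight. Fix an element $y\in P$ with $h(y)=H$; its total contribution is $q^{-H}$ times $\sum_{k\geq 1}(-1)^{k-1} N_k(y)$, where $N_k(y)$ is the number of strict $k$-chains in $P$ containing $y$. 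Using the hypothesis that $P$ has a unique minimum $\bott$: for $y=\bott$, a $k$-chain containing $\bott$ is $\bott$ together with a strict $(k-1)$-chain in $P\setminus\bott$ (possibly empty when $k=1$), so $\sum_k (-1)^{k-1} N_k(\bott) = 1 - \chi_{P\setminus\bott}$ (the $+1$ from the empty chain, the rest being $-\chi$ of the order complex of $P\setminus\bott$ up to sign bookkeeping). For $y\neq\bott$, every maximal strict chain through $y$ also passes through $\bott$, and the standard pairing argument — pair each chain $C\ni y$ with $C\triangle\{\bott\}$ — shows $\sum_k(-1)^{k-1}N_k(y)=0$, since $\bott$ can always be inserted or removed and this involution is sign-reversing. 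Therefore only $y=\bott$ survives and $\qZ_{P,h}([0]_q) = q^{-h(\bott)}(1-\chi_{P\setminus\bott})$, which is the claimed formula.

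For the ``in particular'' clause, if $P$ also has a unique maximum $\topp\neq\bott$, then $P\setminus\bott$ still has a unique maximum $\topp$, so its order complex is contractible and $\chi_{P\setminus\bott}=1$, giving value $0$. The main obstacle I anticipate is the careful bookkeeping in the sign-reversing involution and the treatment of the empty chain / the $k=1$ term, together with keeping straight which dilate and which reciprocity shift is in play; one must also confirm that $P\setminus\bott$ is nonempty precisely when $P$ has more than one element (if $P=\{\bott\}$ the formula still reads correctly, with $\chi_\emptyset=0$ and value $q^{-h(\bott)}$, consistent with \Cref{ex1}). None of the individual steps is deep, but assembling them so the signs and edge cases are unambiguous is where the real work lies.
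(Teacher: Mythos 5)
Your proof is correct and follows essentially the same route as the paper: Ehrhart reciprocity applied to the $(k+1)$-st dilate of the standard basic simplex yields the same intermediate sum $\sum_{k}(-1)^{k-1}\sum_{c}\sum_i q^{-h(c_i)}$, and the toggling of $\bott$ is the same sign-reversing involution the paper uses, merely organized by the marked element $c_i$ rather than by pairing chains directly. The edge cases (the singleton chain $\{\bott\}$, the empty chain in $P\setminus\bott$, and the one-element poset) are handled consistently with the paper's treatment.
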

\begin{proof}
  The special case when $P$ has only one element is clear, see \Cref{ex1}.

  Assume now that $P$ has at least $2$ elements. First one can show that
  \begin{equation*}
    \qE_{h(c)}([-k-1]_q) = (-1)^{k-1} q^{-\sum h(c)} \sum_i q^{-h(c_i)},
  \end{equation*}
  using Ehrhart reciprocity, for $k \geq 1$ and $c\in
  \Ch_k(P)$. Therefore one finds
  \begin{equation*}
    \qZ_{P,h}([0]_q) =  \sum_{k \geq 1} (-1)^{k-1} \sum_{c \in \Ch_k(P)} \sum_i q^{-h(c_i)}.
  \end{equation*}

  In this sum, the strict chain reduced to $\bott$ contributes the term $q^{-h(\bott)}$.

  On the remaining chains, removing or adding $\bott$ define bijections exchanging
  strict chains with $k+1$ elements containing $\bott$ and strict
  chains with $k$ elements not containing $\bott$. The contributions
  of such a pair of chains to the previous sum almost cancel mutually,
  except one summand $q^{-h(\bott)}$ in the inner sum, contributing
  \begin{equation*}
    \sum_{k \geq 1} (-1)^{k}  \sum_{c \in \Ch_{k}(P\setminus \bott)} q^{-h(\bott)},
  \end{equation*}
  in which one can recognize $- q^{-h(\bott)} \chi_{P\setminus \bott}$.
\end{proof}

Recall that a poset is \textbf{bounded} if it has unique minimum $\bott$ and unique maximum $\topp$.

\begin{lemma}
  \label{valeur_moins1}
  If the poset $P$ is bounded, then
  \begin{equation}
    \qZ_{P,h}([-1]_q) =  q^{-h(\bott)-h(\topp)} \mu_P(\bott,\topp),
  \end{equation}
  where $\mu_P$ is the usual Möbius function of the poset.
\end{lemma}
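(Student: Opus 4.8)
The plan is to mimic the proofs of Lemmas \ref{valeur_1} and \ref{valeur_0}, pushing the Ehrhart-reciprocity computation one step further. Concretely, I would first establish the formula
\[
\qE_{h(c)}([-k-2]_q) = (-1)^{k-1} q^{-\sum h(c)} \sum_{i \le j} q^{-h(c_i)-h(c_j)}
\]
for $k \ge 1$ and $c \in \Ch_k(P)$ — here the sum is over unordered pairs with repetition allowed, since the interior points of the $(k+2)$-nd dilate of the standard basic simplex in $\NN^k$ are exactly the points $(1,\dots,1)+e_i+e_j$. (This is the natural continuation of the pattern: $([-k]_q)$ picks up the single interior point $(1,\dots,1)$, $([-k-1]_q)$ picks up the $k$ points $(1,\dots,1)+e_i$, and $([-k-2]_q)$ picks up the $\binom{k+1}{2}$ points with one extra unit of total weight distributed over two — possibly equal — coordinates.) I would verify this directly from the reciprocity statement $\ehr_{Q,\ell}([-n]_q) = (-1)^d \sum_{z \in \Int(nQ)} q^{-\ell(z)}$ recalled in the excerpt, with $d = k-1$.

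Next, substituting into Formula \eqref{qZ_almost} with the appropriate shift (so that $([-1]_q)$ corresponds to $[n-k-1]_q$ with $n = -1$, i.e. to $\qE_{h(c)}([-k-2]_q)$), I would obtain
\[
\qZ_{P,h}([-1]_q) = \sum_{k \ge 1} (-1)^{k-1} \sum_{c \in \Ch_k(P)} \sum_{i \le j} q^{-h(c_i)-h(c_j)}.
\]
The task then becomes a combinatorial evaluation of this alternating sum over strict chains. The idea is again to pair strict chains by inserting or deleting an element, but this time the cancellation is subtler because the inner weight $\sum_{i\le j} q^{-h(c_i)-h(c_j)}$ is quadratic in the chain, not linear. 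I would split the pair-sum into the diagonal part $\sum_i q^{-2h(c_i)}$ and the strict part $\sum_{i<j} q^{-h(c_i)-h(c_j)}$. For a strict chain $c$ and an element $e \notin c$ with $c \cup \{e\}$ still a chain, comparing the weights of $c$ and $c \cup \{e\}$, the difference is controlled by the terms involving $e$; the hope is that after summing over all such insertions with signs, everything telescopes down to the contribution of the two-element chain $\bott < \topp$ together with boundary terms, and that these reassemble into $\mu_P(\bott,\topp)$ via one of the standard expressions
\[
\mu_P(\bott,\topp) = \sum_{k \ge 1} (-1)^{k-1}\,\#\{\text{strict chains }\bott = c_1 < \dots < c_k = \topp\} - (\text{something}),
\]
i.e. Philip Hall's formula $\mu_P(\bott,\topp) = \sum_i (-1)^i \, c_i$ where $c_i$ is the number of chains of length $i$ from $\bott$ to $\topp$, applied to the interior order complex of $(\bott,\topp)$. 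Since $P$ is bounded with $\bott \ne \topp$, Lemma \ref{valeur_0} already tells us the order-zero moment vanishes, which should be used to discard the "linear" leftovers.

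The main obstacle I anticipate is precisely this bookkeeping: organizing the alternating sum of the quadratic weight so that the non-Hall-formula terms cancel. Unlike the order-$0$ case, where the pairing argument left a clean single residual summand $q^{-h(\bott)}$ per chain, here each insertion/deletion of $e$ leaves residual terms $q^{-2h(e)}$ (from the diagonal) and $\sum_{i} q^{-h(c_i)-h(e)}$ (from the cross terms), and one has to argue these again telescope. A cleaner route, which I would try in parallel, is to avoid the explicit chain-sum entirely: use the multiplicativity and the known small-case values together with the fact, implicit in the $q$-Ehrhart / $P$-partition framework, that $\qZ_{P,h}([-1]_q)$ equals (up to the overall $q^{-h(\bott)-h(\topp)}$ normalization, which accounts for the shift of $h$ by a constant) the classical value $Z_P(-1) = \mu_P(\bott,\topp)$ — the point being that reciprocity at $[-1]_q$ in this setting should degenerate, after the shift, to the statement that only the top-dimensional alternating count survives, which is exactly Hall's formula for the Möbius function. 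Checking that the $q$-weights indeed collapse to give the $q$-independent Möbius number (times the height-normalization prefactor) is the crux, and I would confirm it on Examples \ref{ex3}–\ref{ex6} as a sanity check before committing to the general argument.
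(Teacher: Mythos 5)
Your setup is exactly the paper's: the reciprocity evaluation $\qE_{h(c)}([-k-2]_q) = (-1)^{k-1} q^{-\sum h(c)} \sum_{i\le j} q^{-h(c_i)-h(c_j)}$ is correct (the interior lattice points of the $(k+2)$-nd dilate are indeed $(1,\dots,1)+e_i+e_j$ with $i\le j$), and so is the resulting alternating sum over strict chains. But the argument stops where the actual work begins: you describe the collapse of that alternating sum to $q^{-h(\bott)-h(\topp)}\mu_P(\bott,\topp)$ as a ``hope'', name the bookkeeping as the main obstacle, and your fallback route (multiplicativity plus small cases plus an appeal to ``reciprocity should degenerate'') is not an argument. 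So there is a genuine gap, though a fillable one.

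The missing idea is to run the insertion/deletion pairing in two \emph{sequential} stages rather than all at once, exploiting that the weight $W(c)=\sum_{i\le j}q^{-h(c_i)-h(c_j)}$ is the degree-two complete homogeneous function of the variables $q^{-h(c_i)}$, so that each pairing stage lowers the degree of the residual by one. Concretely (assuming $\bott\ne\topp$; the one-element case is immediate): first pair each chain $c$ avoiding $\bott$ with $c\cup\{\bott\}$. The difference of their weights is $q^{-2h(\bott)}+q^{-h(\bott)}\sum_i q^{-h(c_i)}$. Summing with signs, the $q^{-2h(\bott)}$ part contributes $q^{-2h(\bott)}\bigl(1-\chi_{P\setminus\bott}\bigr)=0$, because $P\setminus\bott$ still has the maximum $\topp$ and hence a contractible order complex; what survives is $q^{-h(\bott)}$ times the alternating sum of the \emph{linear} weight $\sum_i q^{-h(c_i)}$ over chains of $P\setminus\bott$. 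Now pair on $\topp$: inserting $\topp$ changes the linear weight by the constant $q^{-h(\topp)}$, so each pair contributes $\pm q^{-h(\topp)}$ and the whole expression collapses to $q^{-h(\bott)-h(\topp)}\bigl(-1+\sum_{k\ge1}(-1)^{k-1}\#\Ch_k(P')\bigr)$ with $P'=P\setminus\{\bott,\topp\}$, which equals $q^{-h(\bott)-h(\topp)}\,\mu_P(\bott,\topp)$ by Philip Hall's theorem. This two-stage pairing is precisely what the paper does; with it written out, your plan becomes a complete proof.
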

\begin{proof}
  The proof is very similar to that of the previous lemmas.

  The special case when $P$ has only one element is clear, see
  \Cref{ex1}. One can therefore assume that $\bott \not= \topp$.

  Using Ehrhart reciprocity to evaluate $\qE_{h(c)}$ at
  $[-k-2]_q$ for $c\in \Ch_k(P)$, one gets the formula
  \begin{equation*}
    \sum_{k \geq 1} (-1)^{k-1} \sum_{c \in \Ch_k(P)} \sum_i \left(q^{-2 h(c_i)} +\sum_{i \not = j} q^{-h(c_i)-h(c_j)} \right).
  \end{equation*}
  Then one first separates strict chains according to whether they
  start by $\bott$ or not. Contributions of pairs of chains almost
  cancel. In the remaining sum, one separates strict chains according
  to whether they end by $\topp$ or not. Once again, there are
  cancellations by pairs. There remains only
  \begin{equation*}
    q^{-h(\bott)-h(\topp)} \left(-1 + \sum_{k \geq 1} (-1)^{k-1} \# \Ch_k(P')\right),
  \end{equation*}
  where $\# \Ch_k(P')$ is the number of strict chains
  $c_1 < c_2 < \cdots < c_k$ in the poset
  $P' = P \setminus \{\bott,\topp\}$. By a classical result, the reduced
  Euler characteristic between the parentheses is the Möbius number $\mu_P(\bott,\topp)$.
\end{proof}

Let us consider a poset $P$ and its dual poset $\barP$. To any
height function $h$ on $P$ and for any integer $H$ at least equal to
the maximal value of $h$, the function $H-h$ is a height function on
$\barP$. In that case, the $q$-Zeta polynomial of $P$ w.r.t. $h$ and
that of $\barP$ w.r.t. $H-h$ are not directly related in a simple way, but
their values are related as follows.
\begin{lemma}
  \label{duality}
  One has the following relation:
  \begin{equation}
    \qZ_{\barP,H-h}([n]_q) = q^{(n-1)H} \left( \qZ_{P,h}([n]_q) \right)|_{q=1/q},  
  \end{equation}
  for all $n \in \ZZ$. Note that the replacement of $q$ by $1/q$ also
  affects the coefficients of the polynomial.
\end{lemma}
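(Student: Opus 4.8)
The plan is to work directly from the combinatorial description of the values, i.e.\ Formula \eqref{qZ_sum_over_chains}, rather than from the closed form \eqref{def_qZ}. Recall that for $n \geq 2$,
\begin{equation*}
  \qZ_{P,h}([n]_q) = \sum_{e_1 \leq \dots \leq e_{n-1}} q^{\sum_j h(e_j)},
\end{equation*}
the sum running over all (weak) chains of length $n-1$ in $P$. Since a chain $e_1 \leq \dots \leq e_{n-1}$ in $P$ is the same thing, after reversing, as a chain $e_{n-1} \leq \dots \leq e_1$ in the dual poset $\barP$, the terms indexed by chains are in bijection, and I only need to track how the weight changes. Under $h \mapsto H-h$ on $\barP$, a chain contributing $q^{\sum_j h(e_j)}$ to $\qZ_{P,h}([n]_q)$ contributes $q^{\sum_j (H - h(e_j))} = q^{(n-1)H} \cdot q^{-\sum_j h(e_j)}$ to $\qZ_{\barP,H-h}([n]_q)$, because there are $n-1$ elements in the chain. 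Summing over all chains, one gets exactly
\begin{equation*}
  \qZ_{\barP,H-h}([n]_q) = q^{(n-1)H} \sum_{e_1 \leq \dots \leq e_{n-1}} q^{-\sum_j h(e_j)} = q^{(n-1)H}\, \bigl(\qZ_{P,h}([n]_q)\bigr)\big|_{q \to 1/q},
\end{equation*}
which is the desired identity for all integers $n \geq 2$.

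The one subtlety is the claim ``for all $n \in \ZZ$'', not just $n \geq 2$: the above chain-counting argument only literally makes sense for $n \geq 2$. To upgrade to all integers, I would invoke a polynomiality/uniqueness argument. Both sides of the asserted identity are, when viewed as functions of $n$, obtained by substituting $[n]_q$ into a fixed element of $\QQ(q)[x]$ — on the left $\qZ_{\barP,H-h}(x)$, and on the right the polynomial $q^{(n-1)H}$ needs a little care since $q^{n-1}$ is not a polynomial in $[n]_q$. Here I would use the standard substitution $q^{n} = 1 + (q-1)[n]_q$, so that $q^{(n-1)H} = \bigl(1 + (q-1)([n]_q - 1)/q\bigr)^H \cdot$ (adjusting for the shift by one), which exhibits $q^{(n-1)H}$ as the evaluation at $[n]_q$ of a genuine element of $\QQ(q)[x]$; compare \Cref{ex1}, where precisely this computation gives $\qZ_{\circ,H}(x) = ((1+(q-1)x)/q)^H$. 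Likewise, the operation ``replace $q$ by $1/q$ in the coefficients and then evaluate at $[n]_q$'' is the evaluation at $[n]_q$ of the polynomial obtained by substituting $q \to 1/q$ in $\qZ_{P,h}$ and then, in its argument, replacing the variable appropriately, since $[n]_{1/q} = q^{1-n}[n]_q$; tracking this change of variable through gives again a well-defined element of $\QQ(q)[x]$.

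Having rewritten both sides as evaluations at $[n]_q$ of fixed elements of $\QQ(q)[x]$, and knowing they agree for all $n \geq 2$ (infinitely many values), I conclude they agree as polynomials, hence their evaluations agree at $[n]_q$ for every $n \in \ZZ$. The main obstacle, and the only place where care is genuinely needed, is this bookkeeping of the $q \to 1/q$ substitution together with the change of variable $x \mapsto$ (something) forced by $[n]_{1/q} = q^{1-n}[n]_q$: one must check that the factor $q^{(n-1)H}$ exactly compensates this, which is a short but slightly fiddly computation with the relation $q^n = 1 + (q-1)[n]_q$. Everything else is the transparent chain-reversal bijection above.
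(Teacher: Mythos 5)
Your proof is correct and follows essentially the same route as the paper: the chain-reversal bijection gives the identity for $n \geq 2$ directly from \eqref{qZ_sum_over_chains}, and the extension to all $n \in \ZZ$ rests on exhibiting the right-hand side as the evaluation at $[n]_q$ of a fixed element of $\QQ(q)[x]$ via $q^{n-1} = (1+(q-1)[n]_q)/q$ and $[n]_{1/q} = q^{1-n}[n]_q$. The paper carries out the ``fiddly computation'' you defer by expanding $\qZ_{P,h}(x) = \sum_{j=0}^{H} \kappa_j x^j$ and writing the right-hand side as the value at $[n]_q$ of $\sum_{j=0}^{H} \kappa_j|_{q=1/q} \left((1+(q-1)x)/q\right)^{H-j} x^j$, which is exactly the bookkeeping you describe.
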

\begin{proof}
  For $n \geq 2$, this follows directly from
  \eqref{qZ_sum_over_chains}. The case $n=1$ also follows from
  \Cref{valeur_1}. It is therefore enough to check that the right hand
  side is the value of a polynomial at the $q$-integer
  $[n]_q$. Indeed, let us write
  \begin{equation*}
    \qZ_{P,h}(x) = \sum_{j=0}^{H} \kappa_j x^j,
  \end{equation*}
  as the degree of this polynomial is at most $H$. Then the right hand side in the expected relation is
  \begin{equation*}
    \sum_{j=0}^{H} \kappa_j|_{q=1/q} (q^{n-1})^{H-j} [n]_q^j,
  \end{equation*}
  which is the value at $[n]_q$ of the polynomial
  \begin{equation*}
    \sum_{j=0}^{H} \kappa_j|_{q=1/q} \left(\frac{1 + (q - 1) x}{q}\right)^{H-j} x^j.
  \end{equation*}
\end{proof}

See \Cref{ex5} for a simple case of this relationship between $q$-Zeta polynomials of dual posets.

\begin{lemma}
  Let $P$ be a poset with height function $h$. Let $\qZ_{P,h}(x)$ be the
  corresponding $q$-Zeta polynomial. Then the $q$-Zeta polynomial
  $\qZ_{P,h+1}$ for $P$ with the shifted height function $h+1$ is given by
  \begin{equation}
    \qZ_{P,h+1}(x) = \frac{1}{q} (1 + (q-1) x) \cdot \qZ_{P,h}(x).
  \end{equation}
\end{lemma}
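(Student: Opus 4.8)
The plan is to work directly from the chain-sum formula \eqref{qZ_sum_over_chains}, evaluated at $q$-integers, and then invoke the same polynomial-identity argument already used in \Cref{duality}. Concretely, for $n \geq 2$ the height function $h+1$ assigns to each element $e_j$ the value $h(e_j)+1$, so a chain $e_1 \leq \cdots \leq e_{n-1}$ with $n-1$ elements contributes $q^{\sum_j (h(e_j)+1)} = q^{n-1} q^{\sum_j h(e_j)}$. Summing over all such chains gives
\begin{equation*}
  \qZ_{P,h+1}([n]_q) = q^{n-1}\,\qZ_{P,h}([n]_q)
\end{equation*}
for every integer $n \geq 2$. So the first step is just this elementary observation about reweighting.

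Next I would recognize $q^{n-1}$ as $1 + (q-1)[n]_q$, since $[n]_q = (q^n-1)/(q-1)$ gives $(q-1)[n]_q = q^n - 1$, hence $q^n = 1 + (q-1)[n]_q$, and dividing by $q$ yields $q^{n-1} = \tfrac1q(1 + (q-1)[n]_q)$. Therefore the proposed identity
\begin{equation*}
  \qZ_{P,h+1}(x) = \tfrac1q (1 + (q-1)x)\cdot \qZ_{P,h}(x)
\end{equation*}
holds after substituting $x = [n]_q$ for every integer $n \geq 2$. Since two polynomials in $x$ that agree at infinitely many distinct values (and $[n]_q$ for $n \geq 2$ are pairwise distinct in $\QQ(q)$) must be equal, the identity holds as an identity in $\QQ(q)[x]$. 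Both sides are manifestly polynomials in $x$: the right-hand side is a product of polynomials, and the left-hand side is a $q$-Zeta polynomial by definition.

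The only point requiring a little care — and the mild obstacle — is justifying that the values at $[n]_q$ for $n \geq 2$ suffice to pin down the polynomial, rather than needing $n \geq 1$ or $n \geq 0$. But the degree of $\qZ_{P,h+1}$ is at most $\max(h)+1$ (a finite bound, from the Definition), so finitely many evaluation points beyond that degree already determine it, and there are infinitely many admissible $n \geq 2$. Alternatively, one can simply note that $[2]_q, [3]_q, \dots$ are infinitely many distinct elements of the infinite field $\QQ(q)$, so agreement there forces equality of polynomials. I would phrase the write-up in two short paragraphs: one establishing $\qZ_{P,h+1}([n]_q) = q^{n-1}\qZ_{P,h}([n]_q)$ from \eqref{qZ_sum_over_chains}, and one converting $q^{n-1}$ to $\tfrac1q(1+(q-1)[n]_q)$ and concluding by the polynomial-identity principle, exactly parallel to the end of the proof of \Cref{duality}.
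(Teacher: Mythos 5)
Your proposal is correct and follows exactly the paper's own argument: the paper likewise derives $\qZ_{P,h+1}([n]_q) = q^{n-1}\,\qZ_{P,h}([n]_q)$ from Formula \eqref{qZ_sum_over_chains} and states that the result follows, with the conversion $q^{n-1} = \tfrac1q(1+(q-1)[n]_q)$ and the polynomial-identity principle left implicit. You have simply spelled out those final steps in more detail.
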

\begin{proof}
  Using Formula \eqref{qZ_sum_over_chains}
  for the height function $h+1$ gives directly
  \begin{equation*}
    \qZ_{P,h+1}([n]_q) = q^{n-1} \qZ_{P,h}([n]_q),
  \end{equation*}
  from which the result follows.
\end{proof}

\begin{lemma}
  Let $P$ be a poset with height function $h$. Let $\qZ_{P,h}(x)$ be
  the corresponding $q$-Zeta polynomial. Let $D \geq 1$ be an
  integer. Then the $q$-Zeta polynomial $\qZ_{P,D h}$ for $P$ with the
  scaled height function $D h$ is given by
  \begin{equation}
    \qZ_{P,D h}(x) = \qZ_{P,h}\bigm|_{q=q^D}\left(\frac{(1+(q-1)x)^D-1}{q^D-1}\right).
  \end{equation}
\end{lemma}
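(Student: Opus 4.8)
The plan is to evaluate both sides at an arbitrary $q$-integer $[n]_q$ with $n \geq 2$ and check they agree, since a polynomial in $\QQ(q)[x]$ of bounded degree is determined by its values at infinitely many such points (the degrees of both sides are controlled: the left side has degree $D \cdot \max h$ in $x$, and the right side is $\qZ_{P,h}$ of degree $\max h$ composed with a degree-$D$ polynomial in $x$, so also degree $D\cdot\max h$). So it suffices to prove the identity of values.

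First I would use Formula \eqref{qZ_sum_over_chains} for the scaled height function $Dh$: directly from the definition,
\begin{equation*}
  \qZ_{P,Dh}([n]_q) = \sum_{e_1 \leq \dots \leq e_{n-1}} q^{D\sum_j h(e_j)} = \sum_{e_1 \leq \dots \leq e_{n-1}} (q^D)^{\sum_j h(e_j)} = \qZ_{P,h}\bigm|_{q \to q^D}\bigl([n]_{q^D}\bigr).
\end{equation*}
That is, evaluating the $q$-Zeta polynomial of $(P,Dh)$ at $[n]_q$ is the same as evaluating the ($q^D$-substituted) $q$-Zeta polynomial of $(P,h)$ at $[n]_{q^D}$. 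The second step is purely a $q$-integer identity: I need to check that $[n]_{q^D}$ equals the argument $\bigl((1+(q-1)x)^D-1\bigr)/(q^D-1)$ evaluated at $x = [n]_q$. Since $1+(q-1)[n]_q = 1 + (q^n - 1) = q^n$, the numerator becomes $(q^n)^D - 1 = q^{nD}-1$, and dividing by $q^D - 1$ gives exactly $[n]_{q^D}$. Hence the right-hand side of the claimed formula, evaluated at $[n]_q$, is $\qZ_{P,h}\bigm|_{q\to q^D}([n]_{q^D})$, matching the left-hand side.

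There is no real obstacle here; the only point requiring a word of care is the degree bookkeeping that licenses "equal at all $[n]_q$, $n\geq 2$, hence equal as polynomials" — but this is the same argument already used implicitly throughout Section \ref{sect2} (e.g. in the proof of \Cref{duality}), and it goes through because the substitution $x \mapsto \bigl((1+(q-1)x)^D-1\bigr)/(q^D-1)$ is a polynomial map of degree exactly $D$ in $x$ with coefficients in $\QQ(q)$, so composing with a degree-$(\max h)$ polynomial yields a genuine element of $\QQ(q)[x]$ of degree $D\max h$, and agreement at the infinitely many values $[n]_q$ forces the polynomial identity. One should also note the consistency check: setting $D=1$ recovers the tautology, and setting $q=1$ recovers the fact that the classical $Z_P$ is unaffected by rescaling the height function, as it must be.
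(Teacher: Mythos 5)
Your proof is correct and follows exactly the paper's own argument: use Formula \eqref{qZ_sum_over_chains} to get $\qZ_{P,Dh}([n]_q) = \qZ_{P,h}|_{q=q^D}([n]_{q^D})$ and then rewrite $[n]_{q^D}$ in terms of $[n]_q$ via $1+(q-1)[n]_q = q^n$. The only difference is that you spell out the degree bookkeeping and the final consistency checks, which the paper leaves implicit.
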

\begin{proof}
  From \eqref{qZ_sum_over_chains}, it follows that
  $\qZ_{P,D h}([n]_q) = \qZ_{P,h}|_{q=q^D}([n]_{q^D})$ for $n \geq 2$. Then
  expressing the argument $[n]_{q^D}$ using $[n]_q$ gives the formula.
\end{proof}

This property can be seen in \Cref{ex1}.

\smallskip


Recall that the coefficients of the flag $f$-vector of a graded poset
$P$ are the numbers of strict chains $c$ with a fixed sequence of
ranks $\rk(c)$. For more on this, see \cite{billera_flag} and \cite[\S 3.12]{stanleyEC1}.

\begin{lemma}
  Let $P$ be a graded poset and consider the height function $\rk$.
  The polynomial $\qZ_{P,\rk}$ is entirely determined by the flag $f$-vector of $P$.
\end{lemma}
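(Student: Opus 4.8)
The plan is to read the statement off directly from the closed formula \eqref{def_qZ}, after a regrouping of its terms. First I would set up notation for the flag $f$-vector in the form convenient here: for a finite strictly increasing sequence $S = (s_1 < \cdots < s_k)$ of natural numbers, write $|S| = k$ and $\sum S = s_1 + \cdots + s_k$, and let $f_S$ be the number of strict chains $c \in \Ch_k(P)$ with $\rk(c) = S$. Since $P$ is graded, every strict chain has a well-defined rank sequence, so these numbers $f_S$ are precisely the coefficients of the flag $f$-vector of $P$; only finitely many of them are nonzero because $P$ is finite.

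The key step is then to rewrite the double sum in \eqref{def_qZ} specialised to $h = \rk$. For a strict chain $c$ of length $k$, the contribution
\begin{equation*}
  q^{\sum \rk(c)} \, \qE_{\rk(c)}\!\left(\frac{x - [k+1]_q}{q^{k+1}}\right)
\end{equation*}
depends on $c$ only through the sequence $\rk(c)$: this uses that $\qE_a$ is a function of the tuple $a$ alone, and that the exponent $\sum \rk(c)$ and the length $k$ are themselves read off from $\rk(c)$. Collecting the chains according to their rank sequence $S$ therefore gives
\begin{equation*}
  \qZ_{P,\rk}(x) = \sum_S f_S \, q^{\sum S} \, \qE_S\!\left(\frac{x - [|S|+1]_q}{q^{|S|+1}}\right),
\end{equation*}
a finite sum over strictly increasing sequences $S$. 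The right-hand side is an explicit expression whose only input about $P$ is the collection of integers $f_S$, that is, the flag $f$-vector; this proves the claim.

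There is essentially no obstacle: the lemma is a bookkeeping consequence of \eqref{def_qZ} together with the definition of $\qE_a$. The one point worth checking explicitly is that $\qE_{\rk(c)}$ is well defined, i.e. that the entries of $\rk(c)$ are pairwise distinct; but this is immediate from the defining property $h(x) < h(y)$ along cover relations applied to $h = \rk$, which makes $\rk(c)$ strictly increasing along every strict chain $c$.
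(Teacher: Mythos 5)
Your proof is correct and follows essentially the same route as the paper: the paper's own argument simply notes that Formula \eqref{qZ_almost} (equivalently \eqref{def_qZ}) expresses $\qZ_{P,\rk}$ as a linear combination of the $\qE_a$ whose coefficients are the flag $f$-vector entries, which is exactly your regrouping of chains by rank sequence. Your version just spells out the bookkeeping in more detail.
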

\begin{proof}
  It follows from Formula
  \eqref{qZ_almost} that $\qZ_{P,\rk}$ is a linear combination of
  $q$-Ehrhart polynomials $\qE_{a}$ whose coefficients are exactly
  coefficients of the flag $f$-vector.
\end{proof}

Note: one can wonder about the converse implication. Most probably,
the flag $f$-vector should be a finer invariant than the $q$-Zeta
polynomial.

\section{Relation with the $q$-order polynomial}

\label{sect3}

The order polytope $Q_P$ of a poset $P$ \cite{stanley_two_polytopes} is the lattice polytope in
$\NN^{P}$ defined by inequalities $0 \leq z_p \leq 1$ for all
$p \in P$ and $z_p \leq z_q$ if $p \leq q$ in $P$. The application
$(z_p)_p \mapsto (1-z_p)_p$ is a bijection between $Q_P$ and
$Q_{\barP}$ for the dual poset $\barP$.

\smallskip

Let us recall the $q$-order polynomial $\qL_P$ of a poset $P$ as introduced in \cite{q-ehrhart}. This
is the $q$-Ehrhart polynomial of the order polytope $Q_P$, with
respect to the linear form $(1,1,\ldots,1)$, \textit{i.e.} the sum of coordinates. The value of $\qL_P$ at
$[n]_q$ for $n \geq 0$ is therefore
\begin{equation}
  \qL_{P}([n]_q) = \sum_{z \in n Q_P} q^{\sum_p z_p}.
\end{equation}
The degree of $\qL_P$ is $\# P$.

Let $P$ be a finite poset. Let $J(P)$ be the distributive lattice of
lower ideals in $P$ under the order of containment. This lattice is
graded by the cardinality of the lower ideal. Fix $n \geq 2$, and
consider a chain $e_1\leq \ldots\leq e_{n-1}$ of elements of
$J(P)$. This is an increasing chain of lower ideals of $P$. As such,
it is characterised by the following data : for each element $p$ of
$P$, let $z_p$ be the smallest integer between $0$ and $n-2$ such that
$p \in e_{z_p+1}$ if it exists and $n-1$ otherwise. Then these vectors
$(z_p)_{p \in P}$ satisfy that $z_p \leq z_q$ if $p \leq q$ in
$P$. This map defines a bijection between chains
$e_1\leq \ldots\leq e_{n-1}$ in $J(P)$ and lattice points in the
$(n-1)$-dilate of the order polytope $Q_P$.

\begin{proposition}
  \label{Z_of_J}
  For every poset $P$, one has $\qZ_{J(P),\rk}(1 + q x) =  \qL_{\barP}(x)$.
\end{proposition}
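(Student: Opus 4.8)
The plan is to compare both sides by evaluating at $q$-integers $[n]_q$ for all $n \geq 2$, which suffices since both sides are polynomials in $x$ (after the substitution $x \mapsto (x-1)/q$ on the left, which is an invertible affine change of variable over $\QQ(q)$). So it is enough to show that $\qZ_{J(P),\rk}([n]_q) = \qL_{\barP}\!\left(\frac{[n]_q - 1}{q}\right)$ for every integer $n \geq 2$. The key observation, already spelled out in the paragraph preceding the statement, is the bijection between chains $e_1 \leq \cdots \leq e_{n-1}$ in $J(P)$ and lattice points $(z_p)_{p \in P}$ in the $(n-1)$-dilate $(n-1)Q_P$ of the order polytope. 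Under this bijection I want to track the two weights: on the $\qZ$ side the exponent of $q$ is $\sum_j \rk(e_j)$, and on the $\qL$ side (for $\barP$) the exponent is $\sum_p w_p$ where $(w_p)$ is the corresponding point of $(n-1)Q_{\barP}$.

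The main computation is therefore to rewrite $\sum_j \rk(e_j)$ in terms of the vector $(z_p)$. Since $J(P)$ is graded by cardinality, $\rk(e_j) = \#e_j = \#\{p : p \in e_j\}$. Summing over $j$ from $1$ to $n-1$ and exchanging the order of summation gives $\sum_j \rk(e_j) = \sum_{p \in P} \#\{j \in \{1,\dots,n-1\} : p \in e_j\}$. By the definition of $z_p$ (the least index $z_p \in \{0,\dots,n-2\}$ with $p \in e_{z_p+1}$, and $n-1$ if no such index exists), the set $\{j : p \in e_j\}$ is exactly $\{z_p+1, z_p+2, \dots, n-1\}$, which has cardinality $(n-1) - z_p$. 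Hence
\begin{equation*}
  \sum_j \rk(e_j) = \sum_{p \in P} \bigl( (n-1) - z_p \bigr) = (n-1)\#P - \sum_{p \in P} z_p.
\end{equation*}
Now the complementation map $(z_p)_p \mapsto (w_p)_p := (n-1-z_p)_p$ is precisely the bijection $(n-1)Q_P \to (n-1)Q_{\barP}$ coming from $(z_p) \mapsto (1-z_p)$ scaled by $n-1$, and under it $\sum_p w_p = (n-1)\#P - \sum_p z_p = \sum_j \rk(e_j)$. Therefore
\begin{equation*}
  \qZ_{J(P),\rk}([n]_q) = \sum_{(z_p) \in (n-1)Q_P} q^{(n-1)\#P - \sum_p z_p} = \sum_{(w_p) \in (n-1)Q_{\barP}} q^{\sum_p w_p} = \qL_{\barP}([n-1]_q),
\end{equation*}
using the defining property of $\qL_{\barP}$ as the $q$-Ehrhart polynomial of $Q_{\barP}$ with linear form the sum of coordinates.

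Finally I would convert the argument $[n-1]_q$ into the stated form: one has $[n-1]_q = \frac{q^{n-1}-1}{q-1}$ and $[n]_q = \frac{q^n-1}{q-1}$, so $q\,[n-1]_q + 1 = q\cdot\frac{q^{n-1}-1}{q-1} + 1 = \frac{q^n - q + q - 1}{q-1} = [n]_q$, i.e. $[n-1]_q = \frac{[n]_q - 1}{q}$. Substituting $x = \frac{[n]_q-1}{q}$, equivalently $[n]_q = 1 + qx$, we get $\qZ_{J(P),\rk}(1+qx)\big|_{x = ([n]_q-1)/q} = \qL_{\barP}(x)\big|_{x=([n]_q-1)/q}$ at every $q$-integer $[n]_q$ with $n \geq 2$; since infinitely many such values agree and both sides are polynomials in $x$ over $\QQ(q)$, the identity $\qZ_{J(P),\rk}(1+qx) = \qL_{\barP}(x)$ holds identically. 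The only mildly delicate point is bookkeeping: making sure the definition of $z_p$ is matched against the right dilation factor ($n-1$, not $n$) and that the edge-non-degeneracy hypothesis needed for $\qL_{\barP}$ to be defined is satisfied for the linear form $(1,\dots,1)$ on $Q_{\barP}$ — but that is exactly the setup under which $\qL$ was introduced, so no new work is required there. I expect the cardinality interchange $\sum_j \#e_j = \sum_p ((n-1)-z_p)$ to be the conceptual heart, and everything else to be routine.
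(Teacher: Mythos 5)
Your proposal is correct and follows essentially the same route as the paper's proof: evaluate at $q$-integers, use the bijection between chains in $J(P)$ and lattice points of $(n-1)Q_P$, rewrite $\sum_j \rk(e_j)=\sum_p(n-1-z_p)$, and pass to $Q_{\barP}$ by complementation. Your version merely spells out the interchange of summation and the final polynomial-identity step in more detail than the paper does.
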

\begin{proof}
  Let us compare the values at $x = [n-1]_q$ for $n \geq 2$. Let us compute
  the monomial in $q$ attached in
  $\qZ_{J(P),\rk}(1 + q x) = \qZ_{J(P),\rk}([n]_q)$ to one chain
  $$e_1\leq \ldots\leq e_{n-1}$$ in terms of the corresponding lattice
  point $(z_p)_{p\in P}$ in $(n-1)Q_{P}$. For the chain, the monomial 
  is $q$ to the power $\sum_i \# e_i = \sum_i \sum_{p \in e_i} 1$. By the
  bijection between chains and lattice points, the exponent of $q$
  becomes $\sum_{p \in P} (n - 1 - z_p)$ where $z \in (n-1)
  Q_P$. Using the bijection $(z_p)_p \mapsto (1-z_p)_p$ between $Q_P$
  and $Q_{\barP}$, the exponent of $q$ becomes
  $\sum_{p \in \barP} z_p$ where $z \in (n-1) Q_{\barP}$. Summing
  these monomials over $z$ gives exactly the value at $[n-1]_q$ of the
  $q$-order polynomial of $\barP$. 
\end{proof}

For example, let $P$ be the poset with three elements $a$, $b$ and $c$, such that $a$
is less than $b$ and $c$. Then the $q$-Ehrhart polynomial of $\barP$ is
\begin{equation*}
  \frac{1}{[2]_q [3]_q} \cdot (q x + 1) \cdot (q^{2} x + q + 1) \cdot (\left(q^{3} + q^{2}\right) x + q^{2} + q + 1),
\end{equation*}
and the $q$-Zeta polynomial of $J(P)$ is
\begin{equation*}
   \frac{1}{[2]_q [3]_q} \cdot x \cdot (q x + 1) \cdot (\left(q^{2} + q\right) x + 1).
\end{equation*}

\section{Ehrhart series and volumes}

\label{sect4}

\subsection{$P$-partitions and $q$-Ehrhart series for posets}

\label{section-p-partition}

Let us first recall the following classical setting, part of the more
general theory of $P$-partitions, due to Stanley \cite[\S
8]{stanley_ordered}.

The $q$-Ehrhart series of any poset $P$ is the formal power series defined by
\begin{equation}
  \label{q-ehrhart-series}
  \qserieL_P = \sum_{n \geq 0} \qL_P([n]_q) t^n,
\end{equation}
and can be expressed as a rational fraction
\begin{equation}
  \qserieL_P = \frac{\HH_P(q, t)}{(1- t)(1-q t)\dots(1-q^{\#P} t)},
\end{equation}
where $\HH_P$ is a polynomial in $q$ and $t$ with non-negative integer
coefficients. This polynomial has a known combinatorial
interpretation, using descents for $t$ and major indices for $q$, as a
sum over all linear extensions of the poset $P$.

Let us also introduce the $q$-volume of a poset, as defined in \cite[\S 4.2]{q-ehrhart}.
\begin{definition}
  \label{def-q-vol}
  The $q$-volume of a poset $P$ is the leading coefficient of the
  $q$-Ehrhart polynomial $\qL_P$ times the $q$-factorial $[\# P]!_q$.
\end{definition}

Proposition 4.9 in \cite{q-ehrhart} gives the following relationship between the polynomial $\HH_{\barP}$ and the $q$-volume of $P$.

\begin{proposition}
  The $q$-volume of $P$ is equal to $q^{\binom{\# P + 1}{2}}$ times $\HH_{\barP}(1/q, 1)$.
\end{proposition}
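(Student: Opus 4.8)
The plan is to extract both sides from the $q$-Ehrhart series $\qserieL_{\barP}$ and compare them. Since $\qserieL_{\barP} = \HH_{\barP}(q,t) / \prod_{i=0}^{\#P}(1-q^i t)$, I would first recall the standard extraction of the leading coefficient of an Ehrhart-type polynomial from its generating series: if $f(x) \in \QQ(q)[x]$ has degree $d$ and $\sum_{n \geq 0} f([n]_q) t^n = N(q,t)/\prod_{i=0}^{d}(1-q^i t)$, then the leading coefficient of $f$ equals $N(q,1)$ divided by the $q$-factorial-type product $\prod_{i=1}^{d}(q^i - 1) \cdot (\text{sign/normalization})$ — more precisely, expanding $1/\prod_{i=0}^{d}(1-q^i t)$ and matching the top-degree-in-$n$ asymptotics of the coefficient of $t^n$. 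Here $d = \#P$ for $\qL_{\barP}$, so I would carefully write down that the coefficient of $t^n$ in $1/\prod_{i=0}^{d}(1-q^i t)$ is the complete homogeneous symmetric function $h_n(1,q,\ldots,q^d)$, whose leading term in $n$ (as a polynomial in $[n]_q$, or equivalently its behaviour governing the degree-$d$ part) contributes the factor $1/([d]!_q \cdot q^{\binom{d}{2}})$ type normalization.

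Next I would make this precise. Write $\qL_{\barP}(x) = v \cdot x^{d}/[d]!_q + (\text{lower order})$, where $v$ is the $q$-volume by \Cref{def-q-vol} and $d = \#P$. Plugging $x = [n]_q$ and using $[n]_q = (q^n-1)/(q-1)$, the dominant contribution to $\qL_{\barP}([n]_q)$ as $n \to \infty$ comes from $v q^{dn}/((q-1)^d [d]!_q)$. On the other hand, from the rational expression, the coefficient of $t^n$ behaves, for large $n$, like $\HH_{\barP}(q,1/q^{?}) \cdot q^{dn}$ divided by the appropriate product obtained from the partial-fraction term at the pole $t = q^{-d}$; a clean way to see this is to multiply $\qserieL_{\barP}$ by $(1-q^d t)$ and evaluate at $t = q^{-d}$, which isolates $\HH_{\barP}(q, q^{-d})/\prod_{i=0}^{d-1}(1-q^{i-d})$. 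Comparing the two expressions for the leading asymptotics yields
\begin{equation*}
  v = [d]!_q \, (q-1)^d \cdot \frac{\HH_{\barP}(q,q^{-d})}{\prod_{i=0}^{d-1}(1-q^{i-d})}.
\end{equation*}
Then I would simplify $\prod_{i=0}^{d-1}(1-q^{i-d}) = \prod_{j=1}^{d}(1 - q^{-j}) = (-1)^d q^{-\binom{d+1}{2}} \prod_{j=1}^{d}(q^j - 1) = (-1)^d q^{-\binom{d+1}{2}} (q-1)^d [d]!_q$, so that the $[d]!_q (q-1)^d$ factors cancel and $v = (-1)^d q^{\binom{d+1}{2}} \HH_{\barP}(q, q^{-d})$. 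Finally I would reconcile this with the claimed form $q^{\binom{d+1}{2}} \HH_{\barP}(1/q,1)$: since $\HH_{\barP}$ is a polynomial in $q$ and $t$ with a known symmetry (degree considerations and the standard "fake degree"/reciprocity for $\HH$ coming from the descent–major-index interpretation over linear extensions), $\HH_{\barP}(q, q^{-d})$ should equal $\pm\HH_{\barP}(1/q, 1)$ up to a monomial — but the cleanest route is simply to cite Proposition 4.9 of \cite{q-ehrhart} as the excerpt explicitly permits, and treat the above as the derivation confirming its shape.

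The main obstacle is getting the $q$-power bookkeeping exactly right: the substitution $x = [n]_q$ mixes $q^n$ and $(q-1)$ in a way that must be tracked to pin down the precise monomial $q^{\binom{\#P+1}{2}}$, and one must be careful about whether the relevant evaluation of $\HH_{\barP}$ is at $t=1$ with $q \to 1/q$ or at $t = q^{-d}$ with $q$ fixed — these agree only because of a homogeneity/symmetry property of $\HH_{\barP}$ that itself needs to be invoked (it follows from the combinatorial interpretation via linear extensions, descents and major index, together with $\#\,(\text{linear extensions})$ being symmetric under $P \leftrightarrow \barP$). Since the statement is attributed to \cite{q-ehrhart}, the honest "proof" here is essentially a pointer to that reference plus the one-line reconciliation above; I would present the asymptotic extraction as the conceptual explanation and defer the delicate identity $\HH_{\barP}(q,q^{-d}) = q^{c}\,\HH_{\barP}(1/q,1)$ to the cited source.
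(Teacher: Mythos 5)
The paper gives no proof of this proposition at all --- it is quoted from \cite[Prop.~4.9]{q-ehrhart} --- so your closing remark that the honest proof is a pointer to that reference is consistent with the source. Judged as a derivation, though, your sketch has one computational slip and one genuine gap, and the gap sits exactly where the content of the proposition lies. The slip: $1-q^{-j}=q^{-j}(q^j-1)$ carries no sign, so $\prod_{j=1}^{d}(1-q^{-j})=q^{-\binom{d+1}{2}}(q-1)^d[d]!_q$ and your $(-1)^d$ should not appear; the corrected output of the pole extraction is $v=q^{\binom{d+1}{2}}\,\HH_{\barP}(q,q^{-d})$. The gap: you extracted the leading coefficient of $\qL_{\barP}$ from $\qserieL_{\barP}$, so the $v$ you computed is the $q$-volume of $\barP$, not of $P$, and the identity you would then need is $\HH_{\barP}(q,q^{-d})=\HH_{P}(1/q,1)$ --- note the switch from $\barP$ to $P$. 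The self-identity $\HH_{\barP}(q,q^{-d})=\HH_{\barP}(1/q,1)$ that you propose to invoke is false in general: for the three-element poset $a<b$, $a<c$ one has $\HH_{\barP}(q,t)=1+q^2t$, so $\HH_{\barP}(q,q^{-3})=1+q^{-1}$ while $\HH_{\barP}(1/q,1)=1+q^{-2}$. The correct identity $\HH_{\barP}(q,t)=\HH_{P}(1/q,q^{d}t)$ is the duality relation induced by the bijection $(z_p)\mapsto(1-z_p)$ between the order polytopes of $P$ and $\barP$ (equivalently $\qL_{\barP}([n]_q)=q^{n\#P}\,\qL_{P}([n]_q)|_{q=1/q}$), and it is precisely the nontrivial half of the proposition; deferring it to an unspecified ``symmetry of $\HH$'' leaves the argument incomplete.

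For comparison, the method the paper uses for the parallel statement \Cref{volume_and_HH} runs the logic in the opposite order and avoids both issues: it first applies the duality relation to rewrite $\HH_{\barP}(1/q,1)$ as $\sum_j h_j q^{-jd}$ in terms of the numerator coefficients of the series attached to $P$ itself, and then reads off the leading coefficient term by term from the basis $B_k$ of \Cref{basis_Aq} (each factor of $t$ in the numerator replaces $x$ by $(x-1)/q$ and hence multiplies the top coefficient by $q^{-d}$). No asymptotics and no separate evaluation at $t=q^{-d}$ are needed. If you want a self-contained proof rather than a citation, adapting that argument --- duality first, then the $B_k$-expansion --- is the shortest route; your pole-extraction computation can survive as a consistency check once the duality relation between $\HH_P$ and $\HH_{\barP}$ is actually established.
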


\subsection{$q$-Ehrhart series for $q$-Zeta polynomials}

Inspired by \Cref{Z_of_J} which identifies the $q$-Zeta polynomials of
distributive lattices with $q$-order polynomials of their posets of
join-irreducibles, it is natural to extend the constructions of the
previous paragraph to the general case, for the $q$-Zeta polynomials
of arbitrary posets.

Let $(P,h)$ be a poset endowed with a height function $h$. In this
section, $H$ will denote the maximal value of $h$.

Let us consider the generating series of values of $\qZ_{P,h}$:
\begin{equation}
  \label{defi:serie_Z}
  \qserieZ_{P,h} = \sum_{n \geq 0} \qZ_{P,h}([n+1]_q) t^n,
\end{equation}
similar to the $q$-Ehrhart series \eqref{q-ehrhart-series}.

\begin{proposition}
  \label{defi:HH}
  This series can be expressed as a rational fraction:
  \begin{equation}
    \qserieZ_{P,h} = \frac{\HH_{P,h}(q,t)}{(1- t)(1-q t)\dots(1-q^{H} t)},
  \end{equation}
  where $\HH_P$ is a polynomial in $q, q^{-1}$ and $t$ with integer
  coefficients. The degree of $\HH_{P,h}$ with respect to $t$ is at most $H$.
\end{proposition}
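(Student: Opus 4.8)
The plan is to reduce the statement to known facts about $q$-Ehrhart series of lattice polytopes, applied termwise to the expansion of $\qZ_{P,h}$ over strict chains. Recall from \eqref{qZ_almost} that, for $n \geq k+1$,
\begin{equation*}
  \qZ_{P,h}([n]_q) = \sum_{k \geq 1} \sum_{c \in \Ch_k(P)} q^{\sum h(c)}\, \qE_{h(c)}([n-k-1]_q),
\end{equation*}
and that for the missing small values $2 \leq n \leq k$ the corresponding contribution vanishes by the reciprocity discussion preceding the definition of $\qZ_{P,h}$. First I would fix a strict chain $c \in \Ch_k(P)$ and study the generating series $\sum_{n \geq 0} q^{\sum h(c)}\,\qE_{h(c)}([n+1]_q)\, t^n$ obtained by plugging $n+1$ for $n$ above; since $\qE_{h(c)}$ is the $q$-Ehrhart polynomial of the standard basic simplex in $\NN^k$ with respect to the linear form given by $h(c) = (h(c_1),\dots,h(c_k))$, the classical fact recalled in \Cref{section-p-partition} (the $q$-Ehrhart series of a lattice polytope is a rational fraction whose denominator is $(1-t)(1-qt)\cdots(1-q^{D}t)$ with $D$ the maximum of the linear form) gives that this series equals a polynomial in $q$ and $t$ over $(1-t)(1-qt)\cdots(1-q^{m}t)$, where $m = \max h(c)$.

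Next I would assemble these chainwise series. Since $\max h(c) \leq H$ for every chain $c$, each denominator $(1-t)(1-qt)\cdots(1-q^{m}t)$ divides the common denominator $(1-t)(1-qt)\cdots(1-q^{H}t)$, so after clearing denominators each chain contributes a polynomial in $q$, $q^{-1}$ (the $q^{-1}$ arising from the shift $q^{\sum h(c)}$ combined with the low-degree terms and from the exact form of $\qE_{h(c)}$) and $t$; summing over the finitely many strict chains of $P$ yields the desired expression
\begin{equation*}
  \qserieZ_{P,h} = \frac{\HH_{P,h}(q,t)}{(1-t)(1-qt)\cdots(1-q^{H}t)}
\end{equation*}
with $\HH_{P,h} \in \ZZ[q,q^{-1},t]$. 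The one genuine subtlety is the index shift: the series \eqref{defi:serie_Z} sums $\qZ_{P,h}([n+1]_q)t^n$, whereas the natural series attached to the simplex for chain $c$ is in the variable $[n-k]_q$; I would absorb the resulting factor $t^{k}$ (and the shift in which small $n$ are hit) into the numerator, noting this only changes $\HH_{P,h}$ by a monomial factor in $t$ and does not affect the denominator. One must also check that the small-$n$ terms $n=1$ (value $\chi_P$ by \Cref{valeur_1}) and, more importantly, any $n$ for which the naive substitution into \eqref{qZ_almost} is invalid are correctly accounted for; by the reciprocity vanishing recalled before the definition, the polynomial identity $\qZ_{P,h}([n]_q) = \sum_{k,c} q^{\sum h(c)}\qE_{h(c)}([n-k-1]_q)$ in fact holds for all $n \geq 1$, so no correction terms are needed.

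For the degree bound in $t$, I would argue as follows. For a single chain $c$ with $m = \max h(c)$, the numerator of the $q$-Ehrhart series of the basic simplex has $t$-degree at most $m \leq H$ (the $h^*$-polynomial of a lattice polytope has degree at most the degree of the Ehrhart polynomial, here $m$, and in the $q$-deformed setting the same bound holds since setting $q=1$ recovers it and the $q$-deformation does not increase the $t$-degree — this is part of the cited classical theory); after multiplying by the extra factors $(1-q^{m+1}t)\cdots(1-q^{H}t)$ needed to reach the common denominator, the $t$-degree becomes at most $m + (H-m) = H$, and the shift by $t^{k}$ is compensated by the fact that the unshifted numerator then has $t$-degree at most $H-k$. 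Summing over chains preserves the bound, giving $\deg_t \HH_{P,h} \leq H$. The main obstacle I anticipate is bookkeeping the index shift cleanly enough to see both that the denominator is exactly $(1-t)(1-qt)\cdots(1-q^{H}t)$ and that the $t$-degree bound survives; this is entirely routine once the chainwise rational-fraction statement is in hand, but it is the step where an off-by-$k$ error could creep in.
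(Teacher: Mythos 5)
Your route (chainwise decomposition via the $q$-Ehrhart series of basic simplices) is genuinely different from the paper's proof, which instead observes that $\qZ_{P,h}(1+qx)$ lies in the ring $A_q$ of \Cref{app:q-ring}, expands it over $\ZZ[q,q^{-1}]$ in the basis $B_0,\dots,B_H$ via \Cref{basis_Aq}, and reads off the rational form from \Cref{serie_B}; that argument delivers the denominator, the integrality and the degree bound in one stroke. Your approach could be made to work, but as written it has a genuine gap at the index shift, precisely the place you flag as the danger spot.

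The problem is the $n=0$ term of each chainwise series. The polynomial identity $\qZ_{P,h}([n]_q)=\sum_{k,c}q^{\sum h(c)}\qE_{h(c)}([n-k-1]_q)$ does hold for all $n$, as you say; but the series attached to a chain $c\in\Ch_k(P)$ is $\sum_{n\geq 0}q^{\sum h(c)}\qE_{h(c)}([n-k]_q)t^n$, and while the terms $1\leq n\leq k-1$ vanish by reciprocity, the term $n=0$ does \emph{not}: it equals $q^{\sum h(c)}\qE_{h(c)}([-k]_q)=(-1)^{k-1}$, since the $k$-th dilate of the basic simplex has the single interior point $(1,\dots,1)$. Replacing the chainwise series by $t^k\prod_{i}(1-q^{h(c_i)}t)^{-1}$, as your "absorb the factor $t^k$, no correction terms are needed" step does, therefore produces a series with constant term $0$, whereas $\qserieZ_{P,h}$ has constant term $\qZ_{P,h}([1]_q)=\chi_P$, which is $1$ for any bounded poset. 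The correct chain contribution is
\begin{equation*}
  (-1)^{k-1}+\frac{q^{\sum h(c)}\,t^{k}}{\prod_{i=1}^{k}\bigl(1-q^{h(c_i)}t\bigr)}.
\end{equation*}
This also breaks your degree count: over the common denominator $\prod_{\ell=0}^{H}(1-q^{\ell}t)$ the unshifted numerator $\prod_{\ell\in\{0,\dots,H\}\setminus h(c)}(1-q^{\ell}t)$ has $t$-degree exactly $H+1-k$, not "at most $H-k$", so after multiplying by $t^{k}$ each of the two summands above contributes a numerator of $t$-degree $H+1$. The bound $\deg_t\HH_{P,h}\leq H$ is still true, but only because the two top coefficients cancel: the coefficient of $t^{H+1}$ is $(-1)^{H+1-k}q^{\binom{H+1}{2}}$ from the shifted simplex part and $(-1)^{k-1}(-1)^{H+1}q^{\binom{H+1}{2}}=-(-1)^{H+1-k}q^{\binom{H+1}{2}}$ from the reciprocity constant. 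With that correction term restored and that cancellation carried out (it happens chain by chain), your argument closes, and in fact yields the slightly stronger conclusion $\HH_{P,h}\in\ZZ[q,t]$.
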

\begin{proof}
  Let us write $\qZ$ for $\qZ_{P,h}$.
  Consider the polynomial $\qZ(1 + qx)$. By \Cref{qZ_sum_over_chains}
  and \Cref{valeur_1}, its value at every $[n]_q$ for $n \geq 0$ is a
  polynomial in $q$ with integer coefficients. By \Cref{basis_Aq}, it
  can therefore be expressed as a sum
  \begin{equation}
    \qZ(1 + qx) = \sum_{0 \leq j \leq H} c_j B_j(x),
  \end{equation}
  where $c_j \in \ZZ[q, q^{-1}]$ and $B_j$ are polynomials defined in Appendix \ref{app:q-ring} by \eqref{defi:B}. 
  Then one concludes using the generating series from \Cref{serie_B}.
\end{proof}

By analogy with \Cref{def-q-vol}, let us introduce the volume associated with the $q$-Zeta polynomial.
\begin{definition}
  \label{def-q-z-vol}
  The $q$-Zeta volume of a poset $P$ with respect to the height function $h$ is
  the leading coefficient of the $q$-Zeta polynomial $\qZ_{P,h}$ times
  the $q$-factorial $[H]!_q$.
\end{definition}

\medskip

In this context, one has the following general property, for arbitrary
posets. Let us choose $H-h$ as the height function on the dual poset
$\barP$.
\begin{proposition}
  \label{volume_and_HH}
  The $q$-Zeta volume of $P$ w.r.t. $h$ is $q^{\binom{H}{2}}$ times $\HH_{\barP, H-h}(1/q,1)$.
\end{proposition}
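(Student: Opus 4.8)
The plan is to mimic the proof of the analogous statement for $q$-Ehrhart polynomials, Proposition 4.9 of \cite{q-ehrhart}, by extracting the leading coefficient of a polynomial from the numerator of its generating series. First I would recall the general mechanism: if a polynomial $F(x) \in \QQ(q)[x]$ of degree $\leq H$ has generating series $\sum_{n \geq 0} F([n+1]_q) t^n = N(t)/\prod_{j=0}^{H}(1-q^j t)$, then the leading coefficient of $F$ can be read off from the top-degree behaviour, and more precisely from $N(1)$ after suitable normalization. Applying this to $F = \qZ_{P,h}$ and using \Cref{defi:HH}, the $q$-Zeta volume of $P$ w.r.t. $h$ should equal $\HH_{P,h}(q,1)$ up to an explicit power of $q$ (coming from the $q$-factorial $[H]!_q$ appearing in \Cref{def-q-z-vol} versus the $q^j$'s in the denominator), by the same partial-fraction argument as in the $q$-Ehrhart case.

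The second ingredient is the duality relation of \Cref{duality}, namely $\qZ_{\barP,H-h}([n]_q) = q^{(n-1)H}\,\qZ_{P,h}([n]_q)\big|_{q=1/q}$ for all $n \in \ZZ$. I would substitute $n \mapsto n+1$ and form the generating series of both sides. The left side is $\qserieZ_{\barP,H-h}(t)$, whose numerator is $\HH_{\barP,H-h}(q,t)$ over $\prod_{j=0}^H(1-q^j t)$. The right side, after the substitution $q \mapsto 1/q$ inside $\qZ_{P,h}$, becomes $\sum_{n\geq 0} q^{nH}\bigl(\qZ_{P,h}([n+1]_q)\big|_{q=1/q}\bigr) t^n$; writing $\qserieZ_{P,h}$ with numerator $\HH_{P,h}(q,t)$, one tracks how the operations "substitute $q \mapsto 1/q$" and "multiply the coefficient of $t^n$ by $q^{nH}$" (equivalently $t \mapsto q^H t$) act on the rational fraction. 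The substitution $q \mapsto 1/q$ turns the denominator $\prod_{j=0}^H (1-q^j t)$ into $\prod_{j=0}^H(1 - q^{-j} t)$, and then $t \mapsto q^H t$ restores it, up to a monomial factor $\prod_{j=0}^H q^{H-j} = q^{\binom{H+1}{2}}$, to $\prod_{j=0}^H (1-q^j t)$. Comparing numerators then gives $\HH_{\barP,H-h}(q,t)$ in terms of $\HH_{P,h}(1/q, q^H t)$ times an explicit power of $q$.

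Combining the two, I would evaluate the resulting identity at $t=1$: the relation between $\HH_{\barP,H-h}(q,1)$ and $\HH_{P,h}(1/q,1)$ obtained in the previous step, together with the first-paragraph formula expressing the $q$-Zeta volume of $P$ through $\HH_{P,h}(q,1)$ (hence, after $q\mapsto 1/q$, through $\HH_{P,h}(1/q,1)$), should collapse to the asserted $q^{\binom{H}{2}}\,\HH_{\barP,H-h}(1/q,1)$. The bookkeeping of the powers of $q$ — reconciling the $\binom{H+1}{2}$ from the denominator shift, the $q^{\binom{H}{2}}$ in the final statement, and the exponent coming from $[H]!_q = \prod_{j=1}^H (q^j-1)/(q-1)$ — is where I expect the only real friction; everything else is a formal manipulation of rational generating functions together with \Cref{duality} and \Cref{defi:HH}.

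The main obstacle will therefore be getting the exponents of $q$ exactly right, in particular being careful that \Cref{def-q-z-vol} defines the $q$-Zeta volume with $[H]!_q$ (a polynomial of degree $\binom{H}{2}$ in $q$) rather than with the "full" product $\prod_{j=0}^H q^j$; checking the base cases $H=0$ and $H=1$ against \Cref{ex1} (where $\qZ_{\circ,H}(x) = ((1+(q-1)x)/q)^H$, so the leading coefficient is $(q-1)^H/q^H$ and $[H]!_q$ is explicit) would serve as a useful sanity check that the constants match before writing the general argument.
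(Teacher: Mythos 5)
Your overall architecture is the same as the paper's: translate \Cref{duality} into a relation between the numerators $\HH_{\barP,H-h}$ and $\HH_{P,h}$ of the generating series, then extract the leading coefficient of $\qZ_{P,h}$ from its numerator. But the first ingredient, as you state it, is wrong, and not merely by a power of $q$. In the $q$-setting the leading coefficient of a polynomial $F$ of degree $H$ with $\sum_{n\geq 0}F([n+1]_q)t^n = N(t)/\prod_{\ell=0}^{H}(1-q^{\ell}t)$ is \emph{not} governed by $N(1)$: multiplying the numerator by $t$ shifts the sequence of values by one step, which on the polynomial side is the substitution $x \mapsto (x-1)/q$, and this multiplies the coefficient of $x^{H}$ by $q^{-H}$. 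Hence the term $h_j t^j$ of the numerator contributes with an extra weight $q^{-jH}$ to the leading coefficient, and the relevant evaluation is $N(q^{-H})$, not $N(1)$. Since the weight depends on $j$, it cannot be absorbed into one overall power of $q$. Concretely, for the poset of \Cref{ex3} one has $H=2$, $\HH_{P,\rk}(q,t)=1+2qt$ and $q$-Zeta volume $q+2 = q\,\HH_{P,\rk}(q,q^{-2})$, while no power of $q$ times $\HH_{P,\rk}(q,1)=1+2q$ equals $q+2$. The identity you actually need is that the volume equals $q^{\binom{H}{2}}\HH_{P,h}(q,q^{-H})$; it is precisely the shift $t\mapsto q^{H}t$ occurring in the duality relation $\HH_{\barP,H-h}(q,t)=\HH_{P,h}(1/q,q^{H}t)$ that converts the evaluation of $\HH_{\barP,H-h}(1/q,\cdot)$ at $t=1$ into the evaluation of $\HH_{P,h}(q,\cdot)$ at $t=q^{-H}$, which is why the statement comes out with $t=1$ on the dual side. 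As written, your plan would try to match $\HH_{\barP,H-h}(1/q,1)$ against $\HH_{P,h}(1/q,1)$ and would not close.

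A smaller slip: there is no monomial factor $q^{\binom{H+1}{2}}$ in your denominator bookkeeping. One has $\prod_{j=0}^{H}\bigl(1-q^{-j}\cdot q^{H}t\bigr)=\prod_{j=0}^{H}\bigl(1-q^{H-j}t\bigr)$, which after reindexing $\ell=H-j$ is exactly $\prod_{\ell=0}^{H}(1-q^{\ell}t)$; the relation between numerators is the clean $\HH_{\barP,H-h}(q,t)=\HH_{P,h}(1/q,q^{H}t)$ with no extra power of $q$. The genuine power of $q$ in the final statement, $q^{\binom{H}{2}} = q^{\binom{H+1}{2}-H}$, arises instead from the leading coefficient $q^{\binom{H+1}{2}}/[H]!_q$ of the $j=0$ term $B_H$ together with the division by $q^{H}$ needed to pass from the leading coefficient of $\qZ_{P,h}(1+qx)$ to that of $\qZ_{P,h}(x)$.
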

\begin{proof}
  The proof is similar to the proof of \cite[Prop. 4.9]{q-ehrhart}. Let us write
  \begin{equation}
    \label{expression_series}
    \qserieZ_{P,h} = \frac{\sum_{j=0}^{H} h_j t^j}{\prod_{\ell=0}^{H} 1-q^\ell t},
  \end{equation}
  for some coefficients $h_j$ in $\ZZ[q,q^{-1}]$.

  By \Cref{duality} translated into generating series using \eqref{defi:serie_Z}, one has the equality
  \begin{equation*}
    \HH_{\barP, H-h}(q, t) = \HH_{P,h}(1/q,q^{H}t).
  \end{equation*}
  After replacing $q$ by $1/q$ in the evaluation at $t=1$, one gets
  \begin{equation*}
    \HH_{\barP, H-h}(1/q, 1) = \sum_{j=0}^{H} h_j q^{-j H}.
  \end{equation*}

  On the other hand, one can compute directly the coefficient
  of $x^H$ in $\qZ_{P,h}(1+qx)$ from \eqref{expression_series}. For the term
  $j=0$, one can use \Cref{serie_B} and \eqref{defi:B} that gives the
  corresponding coefficient of $x^H$ explicitly as
  \begin{equation*}
    \frac{1}{[H]!_q}\prod_{j=1}^{H} q^j =  \frac{q^{\binom{H+1}{2}}}{[H]!_q}.
  \end{equation*}

  For the terms of index
  $j \geq 1$, one notes that the product by $t$ in the series amounts
  to replace $x$ by $(x-1)/q$ in the polynomial coefficient, which
  multiplies the leading coefficient of $x^H$ by $q^{-H}$.  The total
  leading coefficient of $\qZ_{P,h}(1+x)$ is therefore given by
  \begin{equation*}
    \left( \sum_{j=0}^{H} h_j q^{-j H} \right) \frac{q^{\binom{H+1}{2}}}{[H]!_q}.
  \end{equation*}
  For the leading coefficient of $\qZ_{P,h}(x)$, one has moreover to
  divide by $q^H$.
  
  One concludes by comparing the obtained expressions.
\end{proof}

For example, for the self-dual poset of \Cref{ex3} with $H=2$, the series is
\begin{equation*}
  \qserieZ_{P,\rk} = \frac{1+2 q t}{(1-t)(1-qt)(1-q^2 t)}
\end{equation*}
and the $q$-volume is $q+2$.

\section{Positivity properties}

\label{sect5}

By \Cref{Z_of_J} and the discussion in \S \ref{section-p-partition}, when $P$ is a
distributive lattice, then $\HH_{P,\rk}$ is a polynomial in $q$ and
$t$ with non-negative integer coefficients.

This positivity property of $\HH_{P,\rk}$ is not true for all posets. Small
counterexamples are the posets of \Cref{ex5} and \Cref{ex6}.


In the case of \Cref{ex6} with $H=2$, one gets the numerator
\begin{equation*}
  -q^3 t^{2} + \left(2 + q + q^{2}\right) t - 1.
\end{equation*}


In this section, we will give a sufficient criterion for positivity of
$\HH_{P,\rk}$, in terms of the existence of an $R$-labelling.

\subsection{$R$-labellings}

\label{r-label}

Let $L$ be a set of labels, endowed with an arbitrary relation denoted by
$\rel$. For $p < q$ in $P$, a maximal chain
$p = e_0 < e_1 < \cdots < e_{n+1} = q$ is \textit{increasing}\footnote{This terminology comes from the case where $\rel$ is a partial order relation, which is not assumed here. One may say concatenable of friendly for a better intuition.} if
$\lambda(e_{i-1},e_{i}) \rel \lambda(e_{i},e_{i+1})$ for all $1 \leq i \leq n$.

\smallskip

An $R$-labelling of a poset $P$ by $(L, \rel)$ is a function
$\lambda$ from the set of edges of the Hasse diagram of $P$ to $L$
such that:
\begin{itemize}
\item for every pair of comparable elements $p \leq q$ in $P$, there is exactly one increasing maximal chain from $p$ to $q$.
\end{itemize}

\medskip

For example, the weak order on the symmetric group $S_3$ has no $R$-labelling, as both maximal chains from the minimum to the maximum are necessarily increasing.

\begin{remark}
  Every $EL$-labelling, as defined in
  \cite{bjorner-shellable}, is also an $R$-labelling, as it satisfies a
  stronger condition. Supersolvable lattices and upper-semimodular lattices
  always have an $R$-labelling. This is proved in \cite[Examples 3.13.4
  and 3.13.5]{stanleyEC1}.
\end{remark}


\subsection{Positivity criterion}

\begin{proposition}
  \label{HH_is_positive}
  If $P$ is a bounded and graded poset that admits an $R$-labelling,
  then $\HH_{P,\rk}$ is a polynomial with positive integer coefficients.
\end{proposition}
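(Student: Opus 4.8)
The plan is to compute $\HH_{P,\rk}$ explicitly as a generating function over the maximal chains of $P$, graded by the descents and major index of the $R$-labelling; positivity then becomes manifest. As a first step I would turn \eqref{qZ_sum_over_chains} into a product formula for the series. Using \eqref{qZ_sum_over_chains} for $n \geq 2$ together with \Cref{valeur_1} (which gives $\qZ_{P,\rk}([1]_q) = \chi_P = 1$, the only place where boundedness is used at this stage), one has
\[
  \qserieZ_{P,\rk} = \sum_{m \geq 0} t^m \sum_{e_1 \leq \cdots \leq e_m} q^{\sum_j \rk(e_j)},
\]
the $m=0$ term being the empty multichain. Sorting each multichain by its underlying strict chain $c = (c_1 < \cdots < c_k)$ and the multiplicities $m_i \geq 1$ of the $c_i$, and summing the resulting geometric series in each $m_i$, this collapses (as formal power series, since $P$ is finite) to
\[
  \qserieZ_{P,\rk} = \sum_{S \subseteq \{0,1,\dots,H\}} \alpha_P(S) \prod_{s \in S} \frac{q^s t}{1 - q^s t},
\]
where I write $\alpha_P(S)$ for the number of strict chains of $P$ with rank set exactly $S$ (the map $c \mapsto \rk(c)$ being well defined because $\rk$ is strictly increasing along chains, and $\alpha_P(\emptyset) = 1$).

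Next I would clear denominators. Multiplying by $\prod_{\ell=0}^{H}(1 - q^\ell t)$, expanding each $\prod_{\ell \notin S}(1 - q^\ell t)$, and regrouping by inclusion–exclusion over the union of the two index sets gives
\[
  \HH_{P,\rk}(q,t) = \sum_{U \subseteq \{0,\dots,H\}} \beta_P(U)\, q^{\sum_{u \in U} u}\, t^{|U|},
  \qquad \beta_P(U) := \sum_{S \subseteq U} (-1)^{|U \setminus S|} \alpha_P(S).
\]
Since $P$ is bounded and graded, $\bott$ and $\topp$ are the unique elements of ranks $0$ and $H$; adding or deleting them shows $\alpha_P(S) = \alpha_P(S \cap \{1,\dots,H-1\})$ for every $S$, and then factoring the defining sum shows that $\beta_P(U)$ vanishes unless $U \subseteq \{1,\dots,H-1\}$, in which case it is exactly the ordinary flag $h$-vector entry of the graded poset $P$. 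Hence
\[
  \HH_{P,\rk}(q,t) = \sum_{U \subseteq \{1,\dots,H-1\}} \beta_P(U)\, q^{\sum_{u \in U} u}\, t^{|U|}.
\]
To conclude, by the classical interpretation of the flag $h$-vector of a bounded graded poset admitting an $R$-labelling (recalled in \Cref{app:flags}), $\beta_P(U)$ is the number of maximal chains of $P$ whose set of descents with respect to $\lambda$ equals $U$; in particular $\beta_P(U) \geq 0$ for all $U$. This yields the claimed positivity, and moreover identifies $\HH_{P,\rk}(q,t)$ with the sum of $q^{\mathrm{maj}}t^{\mathrm{des}}$ over all maximal chains of $P$, the natural analogue of the combinatorial interpretation of $\HH_P$ recalled in \Cref{section-p-partition}.

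The only genuinely non-formal ingredient is this last step, namely the theorem that an $R$-labelling forces $\beta_P(S) \geq 0$ through the descent statistic on maximal chains; everything else is bookkeeping with geometric series and Möbius inversion on a Boolean lattice. In the write-up the point requiring care is the handling of the boundary ranks $0$ and $H$ in the two computations above — the normalization $\chi_P = 1$ and the reduction $\alpha_P(S) = \alpha_P(S \cap \{1,\dots,H-1\})$ — which is precisely where the hypotheses \emph{bounded} and \emph{graded} are used, and which explains why the examples of non-bounded posets in \Cref{ex5} and \Cref{ex6} fail the conclusion.
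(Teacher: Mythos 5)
Your argument is correct and follows essentially the same route as the paper: you express $\HH_{P,\rk}$ as $\sum_{S}\beta_P(S)\,q^{\sum S}t^{\#S}$ over subsets of $\{1,\dots,H-1\}$ and then invoke the Björner--Stanley theorem (\Cref{thm:if_rlabel_positive}) that an $R$-labelling forces $\beta_P(S)\ge 0$. The only difference is that you re-derive the identity of \Cref{thm:H_and_beta} from scratch (grouping multichains by their strict support, summing geometric series, and Möbius-inverting, with a careful treatment of the ranks $0$ and $H$), whereas the paper delegates this to its appendix via the formal-variable identity of \cite[Ex.~3.67(b)]{stanleyEC1}; the content is the same.
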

\begin{proof}
  The proofs follows from \Cref{thm:if_rlabel_positive} that gives
  positivity of the flag $h$-vector under the given hypothesis and the
  fact that the coefficient of $\HH_{P,\rk}$ are non-negative linear
  combinations of the flag $h$-vector elements, as proved in
  \Cref{thm:H_and_beta}.
\end{proof}

\begin{lemma}
  If $P$ is a bounded and graded poset that admits an $R$-labelling,
  then the $q$-Zeta volume of $\qZ_{P,\rk}$ is a non-negative $q$-analogue of the
  number of maximal chains in $P$.
\end{lemma}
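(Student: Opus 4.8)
The plan is to combine the two immediately preceding results. First I would invoke \Cref{HH_is_positive}: under the hypotheses that $P$ is bounded, graded, and admits an $R$-labelling, the polynomial $\HH_{P,\rk}(q,t)$ has non-negative integer coefficients. Next I would apply \Cref{volume_and_HH}, which expresses the $q$-Zeta volume of $(P,\rk)$ as $q^{\binom{H}{2}}$ times $\HH_{\barP,H-\rk}(1/q,1)$, where $H$ is the maximal value of $\rk$. So I need positivity of $\HH_{\barP,H-\rk}$ rather than of $\HH_{P,\rk}$ itself, and I would observe that the dual poset $\barP$ of a bounded graded poset is again bounded and graded, with $\rk_{\barP} = H - \rk_P$ being precisely the preferred rank function on $\barP$. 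Moreover, an $R$-labelling of $P$ by $(L,\rel)$ induces an $R$-labelling of $\barP$: one labels the edge $(q,p)$ of the Hasse diagram of $\barP$ (for $p < q$ in $P$) by $\lambda(p,q)$, and one takes on $\barP$ the reversed relation $\rel^{\mathrm{op}}$; increasing maximal chains from $p$ to $q$ in $P$ then correspond bijectively to increasing maximal chains from $q$ to $p$ in $\barP$, so the defining uniqueness condition is preserved. Hence $\HH_{\barP,H-\rk}$ has non-negative coefficients by \Cref{HH_is_positive} applied to $\barP$.

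From non-negativity of $\HH_{\barP,H-\rk}(q,t)$ it follows that $\HH_{\barP,H-\rk}(1/q,1)$, being the evaluation at $t=1$ followed by $q \mapsto 1/q$ of a polynomial with non-negative coefficients, is a Laurent polynomial in $q$ with non-negative coefficients; multiplying by the monomial $q^{\binom{H}{2}}$ keeps this property, so the $q$-Zeta volume is a non-negative element of $\ZZ[q,q^{-1}]$ (in fact of $\ZZ_{\geq 0}[q]$ once one checks the exponents, which one can, since $\deg_t \HH \leq H$). Finally I would check that setting $q=1$ recovers the number of maximal chains: the $q$-Zeta volume is the leading coefficient of $\qZ_{P,\rk}$ times $[H]!_q$, and at $q=1$ this is the leading coefficient of $Z_P$ times $H!$; but the leading term of the classical Zeta polynomial is $(\#\Ch_{\mathrm{top}}/H!)\,x^H$ where $\#\Ch_{\mathrm{top}}$ is the number of maximal chains (each maximal chain of a bounded graded poset has length $H+1$, contributing $\binom{x}{H}$-type top term), so the product is exactly the number of maximal chains. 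This uses only the $q=1$ specialisation established earlier together with \eqref{classical_Z2}.

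The main obstacle is the duality bookkeeping in the second and third sentences: one must be careful that $H-\rk_P$ really is the canonical rank function $\rk_{\barP}$ (this needs $P$ connected, or one argues componentwise — though for a bounded poset connectedness is automatic), and that the induced labelling on $\barP$ genuinely satisfies the $R$-labelling axiom with the opposite relation. Everything else is a direct chaining of \Cref{HH_is_positive}, \Cref{volume_and_HH}, and the $q=1$ specialisation, together with the elementary remark that evaluating a non-negative-coefficient polynomial at $t=1$ and substituting $q\mapsto 1/q$ preserves non-negativity.
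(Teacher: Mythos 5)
Your proposal is correct and follows the same route as the paper: chain \Cref{HH_is_positive} with \Cref{volume_and_HH} and handle the passage to the dual poset. The paper's own proof is a two-line sketch that merely remarks ``the set of maximal chains is preserved by duality''; you have usefully supplied the detail it leaves implicit, namely that an $R$-labelling of $P$ induces one on $\barP$ via the reversed relation $\rel^{\mathrm{op}}$ (a step made painless precisely because $\rel$ is not required to be a partial order), together with the $q=1$ check against \eqref{classical_Z2}.
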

\begin{proof}
  This follows from \Cref{HH_is_positive} and \Cref{volume_and_HH}. The
  relationship between the $q$-Zeta volume and $\HH_{P,\rk}$ involves the
  dual poset $P$, but the set of maximal chains is preserved by
  duality.
\end{proof}

\medskip

\Cref{HH_is_positive} above applies to several classical posets
attached to finite Coxeter groups. The first ones are non-crossing
partitions lattices issued from the absolute order \cite{brady-watt,
  brady-watt-Kpi1,bessis,mccammond} and shard-intersection orders
\cite{reading-fpsac,reading}. These two families are known to be
$EL$-shellable by results of \cite{athaBW,petersen}.

The intersection lattice of an essential central hyperplane
arrangement is a geometric lattice and is therefore EL-shellable
\cite[Th. 3.1]{bjorner-shellable}. This applies in particular to posets of
generalized set partitions defined as intersection lattices of
reflection hyperplane arrangements.



Hence, all these posets have an $R$-labelling and non-negative $\HH_{P,\rk}$.

\smallskip

There is a less well-known family of posets attached to finite Coxeter
groups, the parabolic-support posets \cite{BHZ, posets_BHZ}. Every
interval in this family is shellable and upper-semimodular, hence has
an $R$-labelling \cite{BHZ}. But these posets are not bounded, hence
the criterion does not apply to the full posets. Positivity of
$\HH_{P,\rk}$ seems nevertheless to hold for the parabolic-support
posets. This remains to be proved and explained.

\smallskip

Another interesting example is given by the root posets of type $B$,
which are not bounded posets either. One can endow these posets with
an $R$-labelling according to the direction (North-East or North-West)
of the cover relations. It seems that the coefficients of powers $t^k$
in $\HH_{P,\rk}$ are non-negative $q$-analogues of $\binom{n}{k}^2$,
the Narayana numbers of type $B$.

\medskip

One can wonder what would be a necessary and sufficient condition on
posets for positivity of the coefficients of $\HH_{P,\rk}$.






\section{Value at $q=0$ and characteristic polynomial}

\label{sect6}

Assume in this section that $P$ is bounded and graded with a unique
minimal element $\bott$ and a unique maximal element $\topp$. Let $H$
be the maximal value of the rank function $\rk$ on $P$.

The characteristic polynomial of $P$ is defined as
\begin{equation}
  X_P(y) = \sum_{p \in P} \mu(\bott, p)\, y^{H-\rk(p)}.
\end{equation}

We will use the notions of flag $f$-vector and flag $h$-vector
recalled in \Cref{app:flags} and the notations introduced there.

The characteristic polynomial can be expressed using the flag
$h$-vector as follows.
\begin{lemma}
  \label{char_and_beta}
  For every graded and bounded poset $P$,
  \begin{equation}
    y^{H} X_P(1/y) = 1 + \sum_{j=1}^{H} \left( \beta_P([1,\ldots,j-1]) + \beta_P([1,\ldots,j]) \right) (-y)^j.
  \end{equation}
\end{lemma}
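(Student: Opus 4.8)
The starting point is the standard expansion of the characteristic polynomial in terms of the flag $f$-vector. Writing $\alpha_P$ for the flag $f$-vector and $\beta_P$ for the flag $h$-vector, recall (see \Cref{app:flags}) that for a graded bounded poset $P$ of rank $H$, the Möbius function values are encoded by
\begin{equation*}
  \mu_P(\bott,p) = \sum_{S \subseteq [1,\rk(p)-1]} (-1)^{|S|} \alpha_{[\bott,p]}(S),
\end{equation*}
and by Möbius inversion, summing over all $p$ with $\rk(p)=j$, one has $\sum_{\rk(p)=j}\mu_P(\bott,p) = \sum_{S\subseteq[1,j-1]} (-1)^{j-1-|S|}\alpha_P(S\cup\{j\})$ together with the $j=0$ term $\mu_P(\bott,\bott)=1$. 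Plugging this into the definition of $X_P(y)$ and substituting $y\mapsto 1/y$, I would first obtain an expression for $y^H X_P(1/y)$ as an alternating sum over subsets $S$ of the flag $f$-vector entries $\alpha_P(S)$ and $\alpha_P(S\cup\{H\})$ or the like — the details are a routine bookkeeping of which rank-$j$ strata contribute.

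The second step is to pass from the flag $f$-vector to the flag $h$-vector. Using the defining relation $\alpha_P(S) = \sum_{T\subseteq S}\beta_P(T)$ (equivalently $\beta_P(S) = \sum_{T\subseteq S}(-1)^{|S\setminus T|}\alpha_P(T)$), I would substitute and collect terms. The key combinatorial identity to verify is that after this substitution all contributions cancel except those coming from the two "staircase" subsets $[1,\ldots,j-1]$ and $[1,\ldots,j]$ for each $j$, with the stated coefficient $(-y)^j$. The cancellation is governed by the elementary fact that $\sum_{S\colon T\subseteq S\subseteq U}(-1)^{|S|}$ vanishes unless $T=U$; one arranges the double sum over $(j,S)$ and over $T\subseteq S$ so that for a fixed $T$ the inner alternating sum over the admissible $S$ telescopes, leaving a nonzero residue only when $T$ is forced to be an initial segment.

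The third step is simply to read off the coefficient of $(-y)^j$ and check it equals $\beta_P([1,\ldots,j-1])+\beta_P([1,\ldots,j])$, where by convention $\beta_P(\emptyset)=1$ (which accounts for the leading $1$ in the formula, coming from $j=0$ and from the $j=1$ term's $\beta_P(\emptyset)$ part). I would double-check the boundary cases $j=1$ and $j=H$ by hand, since the ranges of admissible subsets degenerate there, and confirm consistency with a small example such as the rank-$2$ poset of \Cref{ex3}.

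\textbf{Main obstacle.} The only real work is the sign-tracking in the second step: making the inner alternating sum over subsets collapse cleanly to exactly the two staircase terms. This is the kind of computation where an off-by-one in the rank indexing (whether the top rank $H$ is included in the chain supports, how $\mu(\bott,\topp)$ is handled) propagates everywhere, so I would set up the conventions for $\alpha_P$, $\beta_P$ on subsets of $[1,H-1]$ versus $[1,H]$ very carefully before starting, and verify the final identity against the known expansion of $X_P$ for a Boolean lattice, where $X_P(y)=(y-1)^H$ and all $\beta_P$ on staircases are easy to compute.
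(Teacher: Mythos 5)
Your plan is correct and would yield the lemma, but it takes a genuinely different route from the paper. The paper's proof is a two-line telescoping: it writes the coefficient of $y^j$ as $\sum_{\rk(p)\le j}\mu(\bott,p)-\sum_{\rk(p)\le j-1}\mu(\bott,p)$, identifies each partial sum as minus the M\"obius number $\mu_S$ of the rank-selected subposet for $S=[1,\ldots,j]$ resp.\ $[1,\ldots,j-1]$, and then invokes the classical identity $\mu_S=(-1)^{\#S-1}\beta_P(S)$ from \cite[3.12]{stanleyEC1} as a black box. You instead expand $\mu(\bott,p)$ by Philip Hall's theorem in terms of the flag $f$-vector, substitute $\alpha_P(T)=\sum_{U\subseteq T}\beta_P(U)$, and let the alternating sum $\sum_{U'\subseteq S\subseteq[1,j-1]}(-1)^{\#S}$ kill everything except $U=[1,\ldots,j-1]$ and $U=[1,\ldots,j]$; this is in effect a self-contained re-derivation of the rank-selection identity in the special case of initial segments, and the collapse mechanism you name is exactly the right one (I checked: the coefficient of $\beta_P(U)$ is $(-1)^j$ precisely for those two staircase sets and $0$ otherwise). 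What the paper's route buys is brevity at the cost of an external citation; yours buys self-containedness at the cost of the sign bookkeeping you yourself flag as the main hazard. Two such slips are already present in the plan: Philip Hall gives $\mu(\bott,p)=-\sum_{S}(-1)^{\#S}\alpha_{[\bott,p]}(S)$ for $p>\bott$ (you dropped the leading minus), and the intermediate exponent should be $(-1)^{\#S+1}$ rather than $(-1)^{j-1-\#S}$ (these agree only for $j$ even). Also note that at $j=H$ the set $[1,\ldots,H]$ lies outside the index set of the flag $h$-vector, so the statement implicitly uses $\beta_P([1,\ldots,H])=0$ (equivalently $\sum_{p}\mu(\bott,p)=0$); your boundary check at $j=H$ will surface this, and your computation does give $m_H=(-1)^H\beta_P([1,\ldots,H-1])$ as required.
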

\begin{proof}
  The polynomial on the left is
  \begin{equation*}
    1 + \sum_{j=1}^{H} \left(\mathop{\sum_{p \in P}}_{rk(p)=j} \mu(\bott, p)\right) y^j.
  \end{equation*}
  Then, for $j \geq 1$, the coefficient of $y^j$ is the difference
  \begin{equation*}
    \mathop{\sum_{p \in P}}_{rk(p)\leq j}  \mu(\bott, p) - \mathop{\sum_{p \in P}}_{rk(p)\leq j-1} \mu(\bott, p),
  \end{equation*}
  in which each sum is minus the Möbius number $\mu_S$ of a
  rank-selected sub-poset, for the sets $S=[1,\ldots,j]$ and
  $S=[1,\ldots,j-1]$. The last step is then to use the relationship
  $\mu_S = (-1)^{\# S - 1} \beta_P(S)$ (\cite[3.12]{stanleyEC1}).
\end{proof}


\begin{theorem}
  The $q$-Zeta polynomial $\qZ_{P,\rk}$ has no pole at $q=0$ and its value at $q=0$ is related to the characteristic polynomial by
  \begin{equation}
    \qZ_{P,\rk}|_{q=0}(1-y) = y^H X_P(1/y).
  \end{equation}
\end{theorem}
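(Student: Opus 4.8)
The plan is to compute $\qZ_{P,\rk}|_{q=0}$ via the generating series $\qserieZ_{P,\rk}$ from \Cref{defi:HH} and compare with the formula for $y^H X_P(1/y)$ provided by \Cref{char_and_beta}. First I would use \Cref{thm:H_and_beta} (the appendix result expressing $\HH_{P,\rk}$ in terms of the flag $h$-vector $\beta_P$) to pin down the numerator polynomial $\HH_{P,\rk}(q,t)$ at $q=0$. Since each coefficient of $\HH_{P,\rk}$ is a non-negative linear combination of flag $h$-vector entries weighted by powers of $q$, specializing $q=0$ should collapse the sum to just the lowest-degree contributions, and one should recover that $\HH_{P,\rk}(0,t) = 1 + \sum_{j=1}^{H}(\beta_P([1,\ldots,j-1]) + \beta_P([1,\ldots,j]))\, t^j$ — note the striking coincidence with the right-hand side of \Cref{char_and_beta} with $-y$ replaced by $t$. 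The key intermediate claim is therefore $\HH_{P,\rk}(0,t) = t^H X_P(1/t)|_{t \mapsto -t}$, i.e. the numerator at $q=0$ is essentially the characteristic polynomial in disguise.

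Next I would track what happens to the denominator $(1-t)(1-qt)\cdots(1-q^H t)$ at $q=0$: it degenerates to $(1-t)$, the other factors all becoming $1$. So $\qserieZ_{P,\rk}|_{q=0} = \HH_{P,\rk}(0,t)/(1-t)$. On the other hand, from the definition \eqref{defi:serie_Z}, $\qserieZ_{P,\rk}|_{q=0} = \sum_{n \geq 0} \qZ_{P,\rk}([n+1]_q)|_{q=0}\, t^n$, and since $[m]_q|_{q=0} = 0$ for $m = 0$ but... one must be careful here: $[n+1]_q$ at $q=0$ equals $1$ for every $n \geq 0$, so naively every coefficient would be $\qZ_{P,\rk}(1)|_{q=0}$. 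This is the delicate point — one cannot just substitute $q=0$ into $[n+1]_q$ and into the polynomial independently. Instead I would argue that the rational function $\qserieZ_{P,\rk}$ in $q,t$ has a well-defined value at $q=0$ as a power series in $t$ (no pole, which also yields the "no pole at $q=0$" assertion for $\qZ_{P,\rk}$ itself once we read off coefficients), and then extract $\qZ_{P,\rk}|_{q=0}$ from it. Concretely: writing $\qZ_{P,\rk}|_{q=0}(x) = \sum_j \kappa_j^{(0)} x^j$, the series $\sum_n \qZ_{P,\rk}|_{q=0}([n+1]_q|_{q\to 0})t^n$ is \emph{not} the right object; rather $\qserieZ_{P,\rk}|_{q=0}$ is obtained by first forming the series for general $q$ and then taking the limit, which amounts to expressing $\qZ_{P,\rk}(1+qx)$ in the basis $B_j(x)$ of Appendix \ref{app:q-ring}, specializing the coefficients $c_j$ at $q=0$, and using \Cref{serie_B}. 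Then I recover $\qZ_{P,\rk}|_{q=0}$ by inverting the change of variable $x \mapsto (x-[k+1]_q)/q^{k+1}$ appropriately — or more cleanly, by noting that $\qZ_{P,\rk}(1+qx)|_{q=0} = \qZ_{P,\rk}|_{q=0}(1)$ is a constant in this limit only if we are sloppy, so the correct route is to keep $q$ symbolic in the substitution and let $q \to 0$ at the very end.

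The cleanest execution, which I would actually adopt: from \eqref{expression_series} write $\qserieZ_{P,\rk} = (\sum_{j=0}^H h_j t^j)/\prod_{\ell=0}^H (1-q^\ell t)$ with $h_j \in \ZZ[q,q^{-1}]$, and recall from the proof of \Cref{defi:HH} that $\qZ(1+qx) = \sum_{0 \le j \le H} c_j B_j(x)$ where the $c_j$ are, up to the standard transformation in Appendix \ref{app:q-ring}, the $h_j$. At $q=0$ the denominator becomes $1-t$ so $h_j|_{q=0} = \HH_{P,\rk}(0,t)$ coefficients, which by the first paragraph equal $\beta_P([1,\ldots,j-1]) + \beta_P([1,\ldots,j])$ (with the convention $\beta_P(\emptyset)=1$ handling $j=0,1$). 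Then using the explicit value of $B_j$ at $q=0$ (the polynomials $B_j$ degenerate, at $q=0$, to $\binom{x}{j}$ or a similar simple binomial — this is where I invoke \eqref{defi:B}) one gets $\qZ_{P,\rk}|_{q=0}(1+qx)|_{q=0}$; but since we want $\qZ_{P,\rk}|_{q=0}$ as a polynomial in $x$, and $1+qx \to 1$, the honest statement is that $\qZ_{P,\rk}(1+qx)$, viewed as a polynomial in the \emph{single} variable $u = 1+qx$ hence in $x$ after dividing out, has $q=0$ limit $\sum_j h_j|_{q=0} B_j(x)|_{q=0}$, and substituting back $u$ and then relabelling $u = 1-y$ produces exactly $1 + \sum_{j=1}^H (\beta_P([1,\ldots,j-1]) + \beta_P([1,\ldots,j]))(-y)^j$, which is $y^H X_P(1/y)$ by \Cref{char_and_beta}.

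The main obstacle is the careful bookkeeping of the two competing specializations — the argument $[n]_q \to 1$ versus the coefficients of the polynomial going to their $q=0$ values — and making precise in what sense "$q=0$" is taken (as a limit of the rational function $\qserieZ_{P,\rk}$, or equivalently via the integral basis $B_j$ of Appendix \ref{app:q-ring} where the no-pole statement is manifest). Once that is set up correctly, the identification of $\HH_{P,\rk}(0,t)$ with the flag $h$-vector data and the match with \Cref{char_and_beta} is essentially a direct comparison. I would also want to double-check the sign/shift conventions on a small example, e.g. \Cref{ex3} where $\qZ_{P,\rk}|_{q=0} = x(2x-1)$, $H=2$, so $\qZ_{P,\rk}|_{q=0}(1-y) = (1-y)(1-2y) = 1 - 3y + 2y^2$, and indeed $y^2 X_P(1/y)$ for that poset (with $\mu$-values $1$ at $\bott$, $-1$ on the three middle atoms, and $2$ at $\topp$) is $1 - 3y + 2y^2$, confirming the statement.
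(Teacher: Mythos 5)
Your overall strategy (express $\qZ_{P,\rk}$ through the flag $h$-vector via \Cref{thm:H_and_beta}, then compare with \Cref{char_and_beta}) is the paper's, and your sanity check on \Cref{ex3} is correct. But the key intermediate claim is false: since $\HH_{P,\rk}(q,t) = \sum_S \beta_P(S)\, t^{\#S} q^{\Sigma S}$ and every nonempty $S \subseteq \{1,\dots,H-1\}$ has $\Sigma S \geq \binom{\#S+1}{2} > 0$, setting $q=0$ in the numerator kills \emph{every} term except $S=\emptyset$, so $\HH_{P,\rk}(0,t) = 1$, not $1 + \sum_{j}(\beta_P([1,\dots,j-1])+\beta_P([1,\dots,j]))\,t^j$. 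Accordingly $\qserieZ_{P,\rk}|_{q=0} = 1/(1-t)$, which only records the (true but useless) fact that $\qZ_{P,\rk}([n+1]_q)|_{q=0}=1$ for all $n$ and carries no information about the polynomial $\qZ_{P,\rk}|_{q=0}(x)$. Your own two assertions are in fact mutually inconsistent: if the numerator at $q=0$ were what you claim, the series coefficients would be the partial sums of the $\beta$'s rather than the constant sequence $1$.

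You correctly flag the delicate point — the two competing specializations — but you never actually resolve it, and the routes you gesture at do not work: $B_j(x)|_{q=0} = 1$ for every $j$ (not a binomial coefficient), and the substitution $x \mapsto 1+qx$ degenerates at $q=0$. The missing idea is to convert the generating-series identity into a \emph{polynomial} identity before letting $q\to 0$: the appendix correspondence gives
\begin{equation*}
  \qZ_{P,\rk}(x) = \sum_{S} \beta_P(S)\, q^{\Sigma S}\, \qbinom{H-\#S-1;x}{H},
\end{equation*}
and the crucial input is \Cref{limit_q0}: the basis polynomial $\qbinom{H-i;x}{H}$ has a pole of order exactly $\binom{i}{2}$ at $q=0$. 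For $i = \#S+1$ this pole is compensated by $q^{\Sigma S}$ precisely when $\Sigma S = \binom{\#S+1}{2}$, i.e.\ when $S$ is an initial segment $[1,\dots,j]$ — this is the mechanism by which the higher flag $h$-vector entries survive the limit despite carrying strictly positive powers of $q$ in $\HH_{P,\rk}$. One then gets $\qZ_{P,\rk}|_{q=0}(x) = \sum_{j=0}^{H-1}\beta_P([1,\dots,j])\,x(x-1)^j$, and the substitution $x=1-y$ together with \Cref{char_and_beta} finishes the proof. Without this pole/compensation analysis your argument cannot produce any term beyond the constant $1$.
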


\begin{remark}
  It can well happen, when the poset $P$ has no unique minimum, that
  $\qZ_{P,h}$ has poles at $q=0$, for instance for the poset of
  \Cref{ex4} and the dual poset in \Cref{ex5}.
\end{remark}

\begin{proof}
  Let us denote $\qZ_{P,\rk}$ by $\qZ$.
  Let us start with \Cref{thm:H_and_beta}. One deduces that
  \begin{equation*}
    t \HH_{P,\rk}(q,t) = \sum_{S \subseteq \{1,\ldots,H-1\}} \beta_P(S) t^{\#S+1} q^{\Sigma S}.
  \end{equation*}
  As $P$ is bounded, $\qZ(0) = 0$ by \Cref{valeur_0}. Therefore the
  previous expression is the numerator of the generating series of
  values of $\qZ$ for $n \geq 0$. By the correspondence stated
  in appendix \ref{app:q-ring}, this means that
  \begin{equation*}
    \qZ_{P,\rk}(x) = \sum_{S \subseteq \{1,\ldots,H-1\}} \beta_P(S) q^{\Sigma S} \qbinom{H-\#S - 1 ; x}{H}.
  \end{equation*}
  One can then use \Cref{limit_q0} to let $q$ be $0$ and obtain
  \begin{equation*}
    \qZ|_{q=0}(x) = \sum_{j=0}^{H-1} \beta_P([1,\ldots,j]) x(x-1)^j.
  \end{equation*}
  Indeed, all sets $S$ that are not formed of consecutive integers
  starting at $1$ appear with a strictly positive power of $q$, hence
  vanish when $q=0$. One deduces
  \begin{equation*}
    \qZ|_{q=0}(1-y) = \sum_{j=0}^{H-1} \beta_P([1,\ldots,j]) (1-y)(-y)^j.
  \end{equation*}
  One can then conclude by an easy comparison with \Cref{char_and_beta}.
\end{proof}












\section{Reciprocity for Eulerian posets}

\label{sect7}

Recall from \cite[\S 3.14]{stanleyEC1} that a finite graded poset $P$
with $\bott$ and $\topp$ is Eulerian if its Möbius function $\mu$ is
given for all $x \leq y$ in $P$ by $\mu(x,y) = (-1)^{\rk(x,y)}$ where
$\rk$ is the rank function of $P$.

The following proposition is a $q$-analogue of a classical reciprocity
property for Zeta polynomials of Eulerian posets, see \cite[\S
2]{stanley_eulerian} and \cite[Prop. 3.14.1]{stanleyEC1}.

We will use in the proof the notations of \Cref{app:q-incidence} and
work in $\Inc_q(P,\rk)$.

\begin{proposition}
  Let $P$ be an Eulerian poset endowed with its rank function $\rk$
  and let $n$ be the rank of $\topp$. Then
  \begin{equation*}
    \qZ(x) = (-q)^n \qZ|_{q=1/q} (-qx).
  \end{equation*}
\end{proposition}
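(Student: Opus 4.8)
The strategy is to prove the reciprocity identity at the level of values at $q$-integers $[n]_q$ (equivalently, to establish the generating-series version), and then to deduce the polynomial identity by uniqueness. Since $\qZ(x)$ has degree $H = n$ (the rank of $\topp$) and both sides are polynomials in $x$ over $\QQ(q)$, it suffices to check the identity at infinitely many specializations, and the natural ones are $x = [m]_q$ for $m \geq 2$. So the first step is to compute $\qZ([m]_q) = \sum_{e_1 \leq \cdots \leq e_{m-1}} q^{\sum_j \rk(e_j)}$ via \eqref{qZ_sum_over_chains}, and to show it equals $(-q)^n$ times the $q \mapsto 1/q$ specialization of $\qZ$ evaluated at $[-m']_q$ for the appropriate $m'$. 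The shift $x \mapsto -qx$ in the statement corresponds, under $x = [m]_q$ and the substitution $q \mapsto 1/q$, to a reflection of the argument; one checks that $-q[m]_q$ becomes (after $q \mapsto 1/q$) essentially $[-(m-?)]_{q}$ — this bookkeeping with $[n]_q$ arithmetic is the first thing to get right.

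\textbf{Working in the $q$-incidence algebra.} Rather than fight the $q$-integer arithmetic directly, I would follow the classical proof of \cite[Prop. 3.14.1]{stanleyEC1} but inside the $q$-deformed incidence algebra $\Inc_q(P,\rk)$ of Appendix \ref{app:q-incidence}. The classical statement $Z_P(m) = \sum$ (chains of length $m-1$) says $Z_P$, viewed as a sequence in $m$, is the ``$\zeta^{m-1}$'' entry from $\bott$ to $\topp$; reciprocity for Eulerian posets then follows from $\zeta^{-1} = \mu$ together with the Eulerian condition $\mu(x,y) = (-1)^{\rk(x,y)}$. In the $q$-setting, the value $\qZ_{P,\rk}([m]_q)$ should likewise be realized as a matrix entry $(\zeta_q^{m-1})_{\bott,\topp}$ for a suitable $q$-deformed zeta element $\zeta_q \in \Inc_q(P,\rk)$ whose $(x,y)$ entry records the weight $q^{\sum \rk}$ appropriately. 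Then one needs two facts: (i) the $q$-analogue of the Eulerian condition, namely that the inverse $\mu_q$ of $\zeta_q$ in $\Inc_q(P,\rk)$ has entries $\mu_q(x,y) = (-1)^{\rk(x,y)} q^{(\text{something in } \rk(x),\rk(y))}$ — this is where the $(-q)^n$ and the $q\mapsto 1/q$ come from; and (ii) that negative powers $\zeta_q^{-m'}$ of $\zeta_q$, expanded as matrix entries, reproduce $\qZ([-m']_q)$, which is exactly the content of Ehrhart reciprocity as already used in \Cref{valeur_1}, \Cref{valeur_0}, \Cref{valeur_moins1}.

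\textbf{Assembling the identity.} With (i) and (ii) in hand, the identity $\zeta_q^{m-1} = (\text{sign}) \cdot (\mu_q)^{-(m-1)}$ combined with $\mu_q = (\text{diagonal twist}) \cdot (\zeta_q|_{q \to 1/q})^{\pm 1}$ should, after reading off the $(\bott,\topp)$ entry and translating back through the dictionary ``matrix entry of $\zeta_q^{m-1}$'' $\leftrightarrow$ ``$\qZ([m]_q)$'', yield $\qZ([m]_q) = (-q)^n \, \qZ|_{q=1/q}([\text{reflected}]_q)$, and then one matches the reflected argument with $-q[m]_q$ under $q \mapsto 1/q$ using the formula $[-m]_q = -q^{-m}[m]_q$. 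Finally, since this holds for all integers $m \geq 2$ and both sides of $\qZ(x) = (-q)^n \qZ|_{q=1/q}(-qx)$ are polynomials of degree $n$ in $x$ over $\QQ(q)$, agreement at infinitely many points forces the polynomial identity.

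\textbf{Main obstacle.} The crux is step (i): pinning down the exact $q$-Eulerian form of $\mu_q$ in $\Inc_q(P,\rk)$, i.e. proving that the $q$-deformed zeta element of an Eulerian poset has an inverse with entries $\pm q^{c(\rk(x),\rk(y))}$ for the precise exponent $c$ that produces the stated prefactor $(-q)^n$ and the precise shift $-qx$. This requires carefully setting up $\Inc_q(P,\rk)$ so that multiplication matches the convolution implicit in \eqref{qZ_sum_over_chains} (the heights/ranks must be tracked additively along chains with the right normalization), and then checking that the classical cancellation $\sum_{x \leq z \leq y}(-1)^{\rk(x,z)} = 0$ survives the $q$-weighting — it should, because the weight $q^{\text{(something linear in ranks)}}$ factors through and the alternating sum is over a fixed interval. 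Once the convolution in $\Inc_q(P,\rk)$ is correctly aligned with the chain-counting generating function, the rest is the same formal manipulation as Stanley's classical argument, and the $q \mapsto 1/q$ appears naturally from the fact that reversing the order of composition (reciprocity) flips the sign of the rank-weighting exponent.
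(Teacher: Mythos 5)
Your strategy coincides with the paper's: reduce the polynomial identity to an identity between values at $q$-integers, realize those values as corner entries of powers of the zeta element in $\Inc_q(P,\rk)$, and let the Eulerian condition do the sign bookkeeping, exactly as in Stanley's classical argument. (The paper evaluates at $[-m]_q$ rather than $[m]_q$, but since $-q[m]_q = [-m]_{1/q}$ these amount to the same check.) The one genuine gap is the step you yourself flag as the crux: you never determine the inverse of the $q$-zeta element, and you frame this as requiring a new ``$q$-Eulerian form of $\mu_q$'' with entries $\pm q^{c(\rk x,\rk y)}$, together with a verification that the Möbius cancellation survives the $q$-weighting. No such new object or cancellation argument is needed. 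In $\Inc_q(P,\rk)$ the product is $A\times_q B = A D_{\rk} B$ with unit $D_{\rk}^{-1}$, so the inverse of $Z^{\times_q m}=Z(D_{\rk}Z)^{m-1}$ is simply $D_{\rk}^{-1}(M D_{\rk}^{-1})^{m}$, where $M=Z^{-1}$ is the \emph{ordinary} Möbius matrix; the $q\mapsto 1/q$ arises from the $D_{\rk}^{-1}$ factors, and the Eulerian hypothesis enters only through the telescoping product $\mu(\bott,e_1)\mu(e_1,e_2)\cdots\mu(e_{m-1},\topp)=(-1)^{n}$ along every multichain, which produces the global prefactor. You also need to justify that the corner entry of this inverse really equals $\qZ([-m]_q)$; the paper's Appendix \ref{app:q-incidence} does this by showing that the two-sided sequence $(Z^{\times_q n})_{n\in\ZZ}$ is annihilated by $\Delta_q^{H+1}$, which is cleaner than re-deriving it from Ehrhart reciprocity as you suggest. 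With these two points supplied, your outline closes up into essentially the paper's proof.
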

\begin{proof}
  It is enough to prove the equality when evaluated at any negative
  $q$-integer $[-m]_q$ for $m \geq 2$. This becomes
  \begin{equation}
    \label{step_eulerian}
    \qZ([-m]_q) = (-q)^n \left( \qZ([m]_q)  \right)|_{q=1/q}.
  \end{equation}
  Recall from \Cref{app:q-incidence} the standard Zeta matrix $Z$ of the poset
  and the diagonal matrix $D_{\rk}$ whose coefficients are $q^{\rk(e)}$
  for $e \in P$. Let $M$ be the standard Möbius matrix defined by $M = Z^{-1}$.

  By the description given in \Cref{app:q-incidence}, the left hand
  side of \eqref{step_eulerian} is the corner coefficient in the
  inverse in $\Inc_q(P,\rk)$ of the matrix
  \begin{equation*}
    Z (D_{\rk} Z)^{m-1},
  \end{equation*}
  which is
  \begin{equation*}
    D_{\rk}^{-1} (M D_{\rk}^{-1})^m.
  \end{equation*}
  The corner coefficient is therefore the sum over all paths
  $\bott = e_0 \leq e_1 \leq e_2 \leq \cdots \leq e_{m} = \topp$ of 
  \begin{equation*}
    q^{0} \mu(\bott,e_1) q^{-\rk(e_1)} \mu(e_1,e_2) q^{-\rk(e_2)} \cdots q^{-\rk(e_{m-1})} \mu(e_{m-1}, \topp) q^{-n}.
  \end{equation*}
  Using that the poset is Eulerian and cancelling signs by pairs
  except the rightmost one, this is the same as $(-q)^{-n}$ times the
  sum over the same set of paths of
  \begin{equation*}
     q^{-\rk(e_1)} q^{-\rk(e_2)} \cdots q^{-\rk(e_{m-1})}.
  \end{equation*}
  This last sum is exactly the value $\qZ([m]_q$ in which $q$ was
  replaced by $1/q$.
\end{proof}

This proposition easily translates into properties of every individual
coefficient. For example, for the the face lattice of the icosahedron, where $n=4$,
the $q$-Zeta polynomial is the quotient of
\begin{equation*}
  (q^4 + 17 q^3 - 6 q^2 + 17 q + 1) x^4 - 16 ( q^3 - 2 q^2 + 2 q - 1) x^3 - 24 ( q^2 - q + 1) x^2 - 8 (q - 1) x
\end{equation*}
by $(q^2 + 1)   (q^2 + q + 1)$.

Similarly, for the associahedron of dimension $3$ with $14$ vertices, the $q$-Zeta polynomial is the quotient of
\begin{equation*}
  (q^4 + 6 q^3 + 7 q^2 + 6 q + 1) x^4 - 5 (q^3 + q^2 - q - 1) x^3 - 15 q x^2 + 5 (q - 1) x
\end{equation*}
by $(q^2 + 1)   (q^2 + q + 1)$.

\appendix

\section{Flag $f$-vectors and $h$-vectors}

\label{app:flags}

Let us recall the standard definitions of flag $f$-vectors and flag
$h$-vectors and then state theorems about their relationship with
maximal chains. A standard reference on the subject is \cite[\S
3.12]{stanleyEC1}.

\smallskip

Let $P$ be a graded and bounded poset with unique minimum $\bott$ and
unique maximum $\topp$. Let $\rk$ be the rank function on $P$, with
minimal value $0$ on $\bott$ and maximal value $H$ on $\topp$.

For a subset $S \subseteq \{1, 2, \dots, H - 1\}$ of cardinality $k$, let
$\alpha_P(S)$ be the number of chains
$\bott < p_1 < p_2 < \cdots < p_k < \topp$ in $P$ such that
$S = \{\rk(p_1),\ldots,\rk(p_k)\}$.

The \textbf{flag $f$-vector} of $P$ is the collection of numbers $\alpha_P(S)$,
indexed by subsets $S$ of $\{1, 2, \dots, H - 1\}$.

The \textbf{flag $h$-vector} of $P$ is the collection of numbers $\beta_P(S)$, also indexed by subsets of $\{1, 2, \dots, H - 1\}$, defined by Möbius inversion as follows:
\begin{equation}
  \beta_P(S) = \sum_{T \subseteq S} (-1)^{\# S- \#T} \alpha_P(T)
\quad\text{and conversely}\quad
  \alpha_P(S) = \sum_{T \subseteq S} \beta_P(T).
\end{equation}

Note that the numbers $\beta_P(S)$ are not obviously non-negative, as
their definition by Möbius inversion involves signs.

\smallskip

Assume now that $P$ admits an $R$-labelling $\lambda$ for the relation $\rel$, as defined in \S \ref{r-label}. To each maximal chain
$M : \bott = p_0 < p_1 < p_2 < \cdots < p_{H} = \topp$ in $P$, one can
associate its descent set\footnote{Again, the terminology comes from the case where $\rel$ is a partial order.}
\begin{equation}
  D(M) = \{ i \in \{1, \ldots, H-1\} \mid \text{ not }\lambda(p_{i-1}, p_i) \rel \lambda(p_1,p_{i+1}) \}.
\end{equation}

\begin{theorem}[Björner and Stanley]
  \label{thm:if_rlabel_positive}
  Let $P$ be a bounded and graded poset with an $R$-labelling. Let
  $S \subseteq \{1, 2, \ldots, H - 1\}$ be any subset the set of
  heights. The number $\alpha_P(S)$ counts maximal chains in $P$ with
  descent set contained in $S$. The number $\beta_P(S)$ counts
  maximal chains in $P$ with descent set $S$ and is therefore
  non-negative.
\end{theorem}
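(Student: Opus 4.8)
The plan is to prove both assertions at once by constructing, for each subset $S = \{s_1 < \cdots < s_k\} \subseteq \{1, \ldots, H-1\}$, an explicit bijection between the chains counted by $\alpha_P(S)$ and the maximal chains of $P$ whose descent set is contained in $S$. The claim about $\beta_P(S)$ will then follow from this count by a formal Möbius inversion.

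First I would set up the two maps. Starting from a maximal chain $M : \bott = p_0 < p_1 < \cdots < p_H = \topp$ with $D(M) \subseteq S$, the selection map keeps only the elements sitting at the ranks prescribed by $S$, producing the chain $\bott < p_{s_1} < \cdots < p_{s_k} < \topp$, whose set of intermediate ranks is exactly $S$ and which is therefore counted by $\alpha_P(S)$. Conversely, starting from a chain $\bott < q_1 < \cdots < q_k < \topp$ counted by $\alpha_P(S)$ (so $\rk(q_j) = s_j$), the refinement map replaces each interval $[q_j, q_{j+1}]$ --- with the conventions $q_0 = \bott$ and $q_{k+1} = \topp$ --- by the unique increasing maximal chain that the $R$-labelling guarantees inside it, and concatenates these pieces into a maximal chain of $P$.

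The heart of the argument is to verify that these maps are mutually inverse, which amounts to matching the condition $D(M) \subseteq S$ with the condition that $M$ be increasing on each gap between consecutive selected elements. The key positional fact is that the integers strictly between two consecutive members $s_j$ and $s_{j+1}$ of $\{0\} \cup S \cup \{H\}$ are precisely the positions lying outside $S$; hence $D(M) \subseteq S$ holds if and only if $M$ carries no descent at any such position, i.e. if and only if $M$ is increasing on each interval $[p_{s_j}, p_{s_{j+1}}]$, with no constraint imposed at the boundary positions belonging to $S$. Because the $R$-labelling supplies exactly one increasing maximal chain in every interval, the refinement of a selected chain is forced and is the only maximal chain with $D(M) \subseteq S$ restricting to it, which is exactly mutual inverseness. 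I expect this bookkeeping --- cleanly separating the \emph{free} boundary positions in $S$ from the \emph{constrained} interior positions --- to be the main obstacle.

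Finally, having established $\alpha_P(S) = \#\{M : D(M) \subseteq S\}$, I would group maximal chains according to their exact descent set to write $\alpha_P(S) = \sum_{T \subseteq S} \gamma(T)$, where $\gamma(T)$ denotes the number of maximal chains with $D(M) = T$. Comparing this with the defining relation $\alpha_P(S) = \sum_{T \subseteq S} \beta_P(T)$ and using the uniqueness of Möbius inversion on the Boolean lattice of subsets of $\{1, \ldots, H-1\}$, I conclude that $\beta_P(S) = \gamma(S)$ for every $S$. As $\gamma(S)$ is the cardinality of a set of maximal chains, it is a non-negative integer, giving the asserted non-negativity.
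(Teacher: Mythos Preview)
Your argument is correct and is precisely the classical proof of Stanley~\cite[Th.~3.13.2]{stanleyEC1} that the paper cites in lieu of giving its own proof; the bijection between rank-selected chains and maximal chains with descent set contained in $S$, followed by M\"obius inversion to identify $\beta_P(S)$, matches that reference step for step. The paper adds only the remark that the argument goes through verbatim when $\rel$ is an arbitrary relation rather than a partial order, which your write-up also respects since you never use transitivity or antisymmetry of $\rel$.
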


For the proof, see \cite[\S 3.13]{stanleyEC1} or
\cite[Th. 2.7]{bjorner-shellable}. One can check that the proof of
\cite[Th. 3.13.2]{stanleyEC1} works \textit{verbatim} without the
hypothesis that $\rel$ is a partial order relation.

Some interesting information about this statement can be found in
\cite[\S 8.1]{gessel-P}.

\medskip

There is a simple relationship between the flag $h$-vector and the
numerator of the $q$-Ehrhart series as defined in \ref{defi:HH}.

\begin{theorem}
  \label{thm:H_and_beta}
  The polynomial $\HH_{P,\rk}$ is the sum
  \begin{equation}
    \sum_{S \subseteq \{1,\ldots, H-1\}} \beta_P(S) t^{\# S} q^{\sum(S)}
  \end{equation}
  and has therefore non-negative coefficients.
\end{theorem}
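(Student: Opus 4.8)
The plan is to compute the generating series $\qserieZ_{P,\rk}$ of \Cref{defi:HH} in closed form directly, and then read off $\HH_{P,\rk}$ as its numerator over $\prod_{r=0}^{H}(1-q^r t)$. Throughout, $P$ is the bounded graded poset fixed in this appendix, so $\bott$ (rank $0$) and $\topp$ (rank $H$) are the unique elements of their ranks. First I would use \eqref{qZ_sum_over_chains}, together with \Cref{valeur_1} to handle $n=0$, to write
\[
  \qZ_{P,\rk}([n+1]_q) \;=\; \sum_{e_1 \leq \cdots \leq e_n} q^{\sum_j \rk(e_j)}
\]
for all $n \geq 0$. Grouping multichains by their underlying strict chain $c = (c_1 < \cdots < c_k)$ and their multiplicity vector, then summing against $t^n$ and swapping the order of summation, the multiplicity part factors into geometric series, yielding
\[
  \qserieZ_{P,\rk} \;=\; \sum_{k \geq 0}\ \sum_{c \in \Ch_k(P)}\ \prod_{i=1}^{k} \frac{q^{\rk(c_i)}\, t}{1 - q^{\rk(c_i)}\, t}.
\]

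Next I would sort the strict chains of $P$ by their set of ranks $R \subseteq \{0,1,\dots,H\}$. Since $\bott$ is the only rank-$0$ element and $\topp$ the only rank-$H$ one, a strict chain with rank set $R$ is uniquely determined by a strict chain of $P\setminus\{\bott,\topp\}$ with rank set $R\setminus\{0,H\}$, together with $\bott$ and/or $\topp$ as dictated by whether $0,H \in R$; hence the number of such chains equals $\alpha_P(R\setminus\{0,H\})$. Splitting off the two factors coming from a possible rank-$0$ and a possible rank-$H$ element, and using $1 + t/(1-t) = 1/(1-t)$ and $1 + q^H t/(1-q^H t) = 1/(1-q^H t)$, the series collapses to
\[
  \qserieZ_{P,\rk} \;=\; \frac{1}{(1-t)(1-q^H t)} \sum_{S \subseteq \{1,\dots,H-1\}} \alpha_P(S) \prod_{r \in S} \frac{q^r t}{1 - q^r t}.
\]
Clearing denominators to the common $\prod_{r=0}^{H}(1-q^r t)$ then exhibits the numerator, and in particular re-proves that $\HH_{P,\rk}$ is a polynomial of $t$-degree at most $H-1$:
\[
  \HH_{P,\rk}(q,t) \;=\; \sum_{S \subseteq \{1,\dots,H-1\}} \alpha_P(S) \Bigl(\prod_{r \in S} q^r t\Bigr)\, \prod_{r \in \{1,\dots,H-1\}\setminus S} (1 - q^r t).
\]

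Finally I would expand each product $\prod_{r \notin S}(1 - q^r t) = \sum_{U} (-1)^{\#U} q^{\sum(U)} t^{\#U}$ over subsets $U$ of $\{1,\dots,H-1\}\setminus S$, and regroup the resulting double sum according to $T = S \sqcup U$: the coefficient of $q^{\sum(T)} t^{\#T}$ becomes $\sum_{S \subseteq T} (-1)^{\#T-\#S}\alpha_P(S)$, which is $\beta_P(T)$ by the defining Möbius inversion of the flag $h$-vector. This gives the claimed formula. Non-negativity of the coefficients is then immediate from that of the numbers $\beta_P(S)$, which holds in particular when $P$ admits an $R$-labelling by \Cref{thm:if_rlabel_positive}. (Setting $q=1$ recovers the classical expression for the numerator of the Zeta series of a bounded graded poset, a reassuring consistency check.) I expect the only delicate point to be the bookkeeping in the middle step: one must check that it is exactly the ranks $0$ and $H$ that account for the two extra denominator factors $1/(1-t)$ and $1/(1-q^H t)$, so that the surviving factors are indexed precisely by $\{1,\dots,H-1\}$ and match the index set of the flag vectors.
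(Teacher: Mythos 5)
Your proof is correct and takes essentially the same route as the paper: both compute the generating series $\qserieZ_{P,\rk}$ by grouping multichains over their underlying rank-selected strict chains and then read off the numerator over $\prod_{\ell=0}^{H}(1-q^\ell t)$. The only difference is that the paper obtains the $\beta_P(S)$-expansion by citing \cite[Ex.~3.67(b)]{stanleyEC1} in formal variables $u_\ell$ (then specializing $u_\ell=q^\ell t$), whereas you re-derive that identity from scratch by first writing the numerator in terms of the flag $f$-vector $\alpha_P$ and then applying the defining M\"obius inversion --- a self-contained but equivalent computation.
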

\begin{proof}
  Introduce formal variables $u_0,u_1,\ldots,u_{H}$. For a chain $K$
  in $P \setminus \{\bott , \topp \}$, let $u_K$ be the product of
  $u_{\rk(p)}$ over elements $p$ in the chain. For a subset
  $S \subseteq \{1, 2, \dots, H - 1\}$, let $u_S$ be the product of $u_i$
  over elements of $S$.

  According to \cite[Ex. 3.67(b)]{stanleyEC1}, one has the formula
  \begin{equation*}
    \sum_{K} u_K = \frac{\sum_{S} \beta_P(S) u_S}{\prod_{\ell=1}^{H-1} 1-u_{\ell}},
  \end{equation*}
  where the sum over $K$ is running over all chains in $P \setminus \{\bott, \topp\}$.

  By extending the sum over $K$ to all chains in $P$, one gets
  \begin{equation*}
    \sum_{K} u_K = \frac{\sum_{S} \beta_P(S) u_S}{\prod_{\ell=0}^{H} 1-u_{\ell}},
  \end{equation*}
  with just two additional factors in the denominator accounting for the multiplicities of $\bott$ and $\topp$ in the chain.

  By specializing every $u_{\ell}$ to $q^{\ell} t$, one gets the equality
  \begin{equation*}
    \sum_{n \geq 0} \qZ_{P,\rk}([n+1]_q) t^{n} = \frac{\sum_{S} \beta_P(S) t^{\# S}q^{\sum(S)}}{\prod_{\ell=0}^{H} 1-q^\ell t}.
  \end{equation*}
  Comparing with the definition of $\HH_{P,\rk}$ as a numerator in \Cref{defi:HH},
  one obtains the expected formula.
\end{proof}


\section{Rings of polynomials with integer Laurent values}

\label{app:q-ring}

Let $A_q$ be the sub-ring of $\QQ(q)[x]$ made of polynomials $P$ such
that $P([n])_q \in \ZZ[q,q^{-1}]$ for all
$n \geq 0$. This is an analogue of the ring of integer-valued
polynomials.

For $k \geq 0$, let $B_k$ be the polynomial defined by
\begin{equation}
  \label{defi:B}
  B_k(x) = \frac{\prod_{j=1}^{k} [j]_q+q^j x}{[k]!_q}.
\end{equation}
Because the values $B_k([n]_q)$ for $n \geq 0$ are standard $q$-binomial
coefficients, every $B_k$ belongs to $A_q$. We will use the convention that $B_{-1}=0$.

Let $\Delta_q$ be the operator acting on polynomials in $x$ defined by
\begin{equation}
  \label{def_delta}
  \Delta_q(P)(x) = \frac{P(x)-P((x-1)/q)}{1+(q-1)x}.
\end{equation}
The right hand side is well defined as a polynomial because the
numerator has a root at $x=1/(1-q)$. Assuming moreover that $P \in A_q$, this
formula implies that the values $\Delta_q(P)([n]_q)$ for $n \geq 1$ are Laurent
polynomials in $q$ with integer coefficients. 

\begin{lemma}
  The operator $\Delta_q$ maps $B_{k}$ to $B_{k-1}$ for all $k \geq 0$.
\end{lemma}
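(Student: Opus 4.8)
The plan is a direct computation resting on a single telescoping identity; the statement to prove is $\Delta_q(B_k) = B_{k-1}$ for all $k \geq 0$. First I would dispose of the base case $k=0$: here $B_0 = 1$, so $\Delta_q(B_0)(x) = (1-1)/(1+(q-1)x) = 0 = B_{-1}$ by the stated convention, and nothing further is required. For $k \geq 1$ the key observation will be the substitution identity
\begin{equation*}
  [j]_q + q^j \cdot \frac{x-1}{q} = [j-1]_q + q^{j-1} x
  \qquad \text{for every } j \geq 1,
\end{equation*}
which is immediate from the elementary fact $[j]_q - q^{j-1} = [j-1]_q$. Applying this factor by factor in the product defining $B_k$ turns $B_k\left(\frac{x-1}{q}\right)$ into $\frac{1}{[k]!_q}\prod_{i=0}^{k-1}([i]_q + q^i x)$; since the $i=0$ factor is just $x$ and $\prod_{i=1}^{k-1}([i]_q + q^i x) = [k-1]!_q\, B_{k-1}(x)$, this equals $\frac{x}{[k]_q}\, B_{k-1}(x)$.

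Next I would peel off the top factor of $B_k(x)$ itself, writing $B_k(x) = \frac{[k]_q + q^k x}{[k]_q}\, B_{k-1}(x)$. Subtracting the two expressions and collecting the coefficient of $B_{k-1}(x)$ gives
\begin{equation*}
  B_k(x) - B_k\left(\frac{x-1}{q}\right)
    = \frac{[k]_q + (q^k - 1) x}{[k]_q}\, B_{k-1}(x)
    = (1 + (q-1)x)\, B_{k-1}(x),
\end{equation*}
where the last step uses $q^k - 1 = (q-1)[k]_q$. Dividing through by $1 + (q-1)x$, which is exactly the denominator in the definition of $\Delta_q$, yields $\Delta_q(B_k) = B_{k-1}$.

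There is no genuine obstacle here; the only care needed is with the index shift $j \mapsto j-1$ in the product and with checking that the $q$-factorials match up, i.e.\ $[k]!_q = [k]_q\,[k-1]!_q$. The identity $[j]_q - q^{j-1} = [j-1]_q$ is the one small computation worth recording explicitly, and it is what makes the argument work.
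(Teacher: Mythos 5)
Your proof is correct, and every step checks out: the factor-by-factor identity $[j]_q + q^{j}\tfrac{x-1}{q} = [j-1]_q + q^{j-1}x$ follows from $[j]_q - q^{j-1} = [j-1]_q$, the index shift correctly produces $B_k\bigl(\tfrac{x-1}{q}\bigr) = \tfrac{x}{[k]_q}B_{k-1}(x)$, and the final cancellation via $q^k - 1 = (q-1)[k]_q$ is exactly right. However, your route is genuinely different from the paper's. The paper observes that a polynomial identity need only be verified on the infinite set of $q$-integers $[n]_q$, $n \geq 1$, and then reduces the claim to the classical $q$-Pascal-type recurrence $\bigl(\qbinom{n+k}{k} - \qbinom{n-1+k}{k}\bigr)/q^n = \qbinom{n+k-1}{k-1}$, since $B_k([n]_q)$ is a standard $q$-binomial coefficient. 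Your argument instead works directly at the level of the polynomials in $\QQ(q)[x]$, peeling off factors of the defining product. What the paper's approach buys is brevity by delegation to a known identity (and it fits the paper's recurring technique of checking identities on $q$-integers, used again in \Cref{unshift}); what yours buys is self-containedness — you never need to invoke the combinatorial identity for $q$-binomials, and you in fact reprove it as a byproduct. Either proof is acceptable; yours could even be shortened by noting that the subtraction step is essentially \Cref{unshift} in disguise.
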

\begin{proof}
  It is enough to prove that it holds for the value at every $q$-integer $[n]_q$ for $n \geq 1$. This in turn follows from the classical formula
  \begin{equation*}
    \left(\qbinom{n+k}{k} - \qbinom{n-1+k}{k}\right)/q^n = \qbinom{n+k-1}{k-1}
  \end{equation*}
  for $q$-binomial coefficients.
\end{proof}

\begin{proposition}
  \label{basis_Aq}
  The polynomials $B_k$ form a basis over $\ZZ[q, q^{-1}]$ of the ring $A_q$.
\end{proposition}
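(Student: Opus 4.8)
The plan is to use the $q$-finite-difference operator $\Delta_q$ from \eqref{def_delta} and to peel off coefficients one degree at a time, starting from the top. First I would record the easy half of the statement: each $B_k$ has degree exactly $k$ (its coefficient of $x^k$ is $\prod_{j=1}^{k}q^j/[k]!_q\neq 0$), so $B_0,\dots,B_d$ form a $\QQ(q)$-basis of the space of polynomials of degree $\le d$; in particular the $B_k$ are $\ZZ[q,q^{-1}]$-linearly independent, and every $P\in A_q$ has a unique expansion $P=\sum_{k=0}^{d}c_kB_k$ with $d=\deg P$ and $c_k\in\QQ(q)$. All that remains is to show these coefficients lie in $\ZZ[q,q^{-1}]$, equivalently that $A_q$ is contained in the $\ZZ[q,q^{-1}]$-span of the $B_k$; I would prove this by induction on $d=\deg P$, the case $d=0$ being immediate since then $P=P([0]_q)\,B_0$ with $P([0]_q)\in\ZZ[q,q^{-1}]$ by definition of $A_q$.

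For the inductive step the key point is that $\Delta_q$ is $\QQ(q)$-linear, lowers the degree by exactly one on any polynomial of positive degree (from $\Delta_q(x^m)=\frac{[m]_q}{q^m}x^{m-1}+\cdots$ the leading term survives, as $[m]_q/q^m\neq 0$), and by the preceding lemma sends $B_k$ to $B_{k-1}$. Consequently $\Delta_q^{d}(P)$ is a constant, and applying $\Delta_q^{d}$ to $P=\sum c_kB_k$ isolates the top coefficient: $\Delta_q^{d}(P)=c_d$. So I would set $c_d:=\Delta_q^{d}(P)$, check that $c_d\in\ZZ[q,q^{-1}]$ (see below), observe that $P-c_dB_d$ then still lies in $A_q$ — because $A_q$ is a ring containing both the constant $c_d$ and $B_d$ — and has degree $\le d-1$ since $\Delta_q^{d}$ annihilates it (a polynomial of degree exactly $d$ has nonzero $\Delta_q^{d}$), and finally invoke the induction hypothesis on $P-c_dB_d$ to get $P=\sum_{k=0}^{d}c_kB_k$ with all $c_k\in\ZZ[q,q^{-1}]$.

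The one genuinely substantive step is the integrality of $c_d=\Delta_q^{d}(P)$, and here I would use the identities $1+(q-1)[n]_q=q^{n}$ and $([n]_q-1)/q=[n-1]_q$ (valid for $n\ge 1$), which turn \eqref{def_delta} into $\Delta_q(Q)([n]_q)=\bigl(Q([n]_q)-Q([n-1]_q)\bigr)/q^{n}$. Thus if $Q$ takes values in $\ZZ[q,q^{-1}]$ at all $[m]_q$ with $m\ge j$, then $\Delta_q(Q)$ takes values in $\ZZ[q,q^{-1}]$ at all $[m]_q$ with $m\ge j+1$; iterating $d$ times from $P\in A_q$ shows $\Delta_q^{d}(P)$ takes values in $\ZZ[q,q^{-1}]$ at every $[m]_q$ with $m\ge d$, and since $\Delta_q^{d}(P)$ is a constant, evaluating it at $[d]_q$ gives $c_d\in\ZZ[q,q^{-1}]$. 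The mild subtlety — essentially the only obstacle — is that each application of $\Delta_q$ \emph{loses} the value at $[0]_q$, so one cannot simply argue that $\Delta_q$ preserves $A_q$ and feed $\Delta_q^{k}(P)$ back into its definition; peeling from the top degree circumvents this precisely because $\Delta_q^{d}(P)$ is constant, so a single surviving value (at $[d]_q$) already pins it down.
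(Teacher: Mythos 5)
Your proof is correct and follows essentially the same route as the paper's: linear independence from the degrees, then induction on $\deg P$, peeling off the top coefficient by applying $\Delta_q^{d}$ and using that the resulting constant takes a Laurent-polynomial value at $[d]_q$ even though each application of $\Delta_q$ sacrifices the value at $[0]_q$. Your version is slightly more explicit about why $\Delta_q^{d}(P)$ equals the top $B$-coefficient (the paper instead tracks the normalization $q^{\binom{d+1}{2}}/[d]!_q$ of leading coefficients directly), but the substance is identical.
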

\begin{proof}
  These polynomials are linearly independent, as $B_k$ has degree
  $k$. It remains to prove that they span $A_q$ over $\ZZ[q, q^{-1}]$.
  The proof is by induction on the degree. This is clear in degree $0$
  as $B_0 = 1$.

  So let $P$ be a polynomial of degree $d > 0$ in $A_q$.  Then
  $Q = \Delta_q(P)$ is a polynomial with values in $\ZZ[q, q^{-1}]$
  for $n \geq 1$.  Moreover, if the leading coefficient of $P$ is
  $c x^d q^{\binom{d+1}{2}}/ [d]!_q$, then the leading
  coefficient of $Q$ is $c x^{d-1} q^{\binom{d}{2}}/
  [d-1]!_q$. Iterating $d$ times the operator $\Delta_q$, one
  reaches a constant polynomial, with values in $\ZZ[q, q^{-1}]$ for
  $n \geq d$ and with leading term $c$. Therefore $c \in \ZZ[q, q^{-1}]$ and the polynomial
  $P - c B_d$ belongs to $A_q$, with degree at most $d-1$. By
  induction, this difference is a linear combination over
  $\ZZ[q, q^{-1}]$ of $B_{k}$ for $0 \leq k \leq d-1$. Hence $P$ is a
  linear combination over $\ZZ[q, q^{-1}]$ of $B_{k}$ for
  $0 \leq k \leq d$.
\end{proof}

\begin{corollary}
  Let $P$ be an element of $A_q$. Then $P([n]_q) \in\ZZ[q, q^{-1}] $ for all $n \in \ZZ$. The operator $\Delta_q$ acts on $A_q$.
\end{corollary}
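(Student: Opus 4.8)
The plan is to deduce both assertions from \Cref{basis_Aq} together with two elementary identities relating the operator $\Delta_q$ to the shift $n \mapsto n-1$ on $q$-integers.

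First I would prove that $\Delta_q$ acts on $A_q$, which also gives the last sentence. The operator $\Delta_q$ is $\QQ(q)$-linear, hence $\ZZ[q,q^{-1}]$-linear, directly from its definition \eqref{def_delta}. Given $P \in A_q$ of degree $d$, write $P = \sum_{k=0}^{d} c_k B_k$ with $c_k \in \ZZ[q,q^{-1}]$ by \Cref{basis_Aq}. By linearity and the preceding lemma ($\Delta_q B_k = B_{k-1}$, with the convention $B_{-1}=0$), one gets $\Delta_q(P) = \sum_{k=1}^{d} c_k B_{k-1}$, a $\ZZ[q,q^{-1}]$-linear combination of $B_0,\dots,B_{d-1}$, hence again an element of $A_q$ by \Cref{basis_Aq}. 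In particular $\Delta_q^{m}(P) \in A_q$ for all $m \geq 0$.

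Next I would record the evaluation identity: for every $P \in \QQ(q)[x]$ and every $n \in \ZZ$,
\begin{equation*}
  P([n-1]_q) = P([n]_q) - q^{n}\,\Delta_q(P)([n]_q).
\end{equation*}
This follows by rewriting \eqref{def_delta} as $P((x-1)/q) = P(x) - (1+(q-1)x)\,\Delta_q(P)(x)$ and substituting $x=[n]_q$, using the identities of rational functions $([n]_q-1)/q = [n-1]_q$ and $1+(q-1)[n]_q = q^{n}$, both valid for all $n \in \ZZ$.

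Finally, to show $P([n]_q) \in \ZZ[q,q^{-1}]$ for all $n \in \ZZ$, only the case $n<0$ needs attention, the case $n\ge 0$ being the definition of $A_q$. I would prove by induction on $m \geq 0$ the statement: \emph{for every $Q \in A_q$, one has $Q([-m]_q) \in \ZZ[q,q^{-1}]$}. The base case $m=0$ is $Q(0)\in\ZZ[q,q^{-1}]$, which holds since $[0]_q=0$. For the inductive step, apply the evaluation identity with $P=Q$ and $n=-m$:
\begin{equation*}
  Q([-m-1]_q) = Q([-m]_q) - q^{-m}\,\Delta_q(Q)([-m]_q);
\end{equation*}
the first term is in $\ZZ[q,q^{-1}]$ by the induction hypothesis for $Q$, and the second by the induction hypothesis applied to $\Delta_q(Q)\in A_q$, which is legitimate precisely because of the first step. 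Since $q^{-m}\in\ZZ[q,q^{-1}]$, the sum lies in $\ZZ[q,q^{-1}]$, closing the induction.

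The only point requiring care — and the reason the stability of $A_q$ under $\Delta_q$ must be established first — is that the induction has to be carried out uniformly over all of $A_q$ rather than for the single polynomial $P$, so that at each step it can be invoked for $\Delta_q(Q)$ as well as for $Q$. (One could instead bypass the induction by observing $B_k([n]_q) = \qbinom{n+k}{k}$ and quoting the standard negation identity for $q$-binomial coefficients, but the argument above stays within the framework already developed in this appendix.)
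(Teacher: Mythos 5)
Your proof is correct, but it inverts the logical order of the paper's argument and is worth comparing. The paper first establishes the statement $P([n]_q)\in\ZZ[q,q^{-1}]$ for all $n\in\ZZ$ by checking it directly on the basis elements $B_k$ (an ``easy computation'', which in effect amounts to the negation identity for $q$-binomial coefficients applied to $B_k([n]_q)=\qbinom{n+k}{k}$), and only then deduces the stability of $A_q$ under $\Delta_q$ from \eqref{def_delta}: indeed $\Delta_q(P)([n]_q)=\bigl(P([n]_q)-P([n-1]_q)\bigr)/q^n$, and the case $n=0$ of membership in $A_q$ requires knowing $P([-1]_q)\in\ZZ[q,q^{-1}]$, i.e.\ the first statement. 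You do the reverse: you get stability under $\Delta_q$ purely formally from \Cref{basis_Aq} and the lemma $\Delta_q B_k=B_{k-1}$, and then propagate integrality to negative $q$-integers by a downward induction on the evaluation point, using the rewritten identity $Q([-m-1]_q)=Q([-m]_q)-q^{-m}\Delta_q(Q)([-m]_q)$. Your quantification of the induction hypothesis over all of $A_q$ (so that it can be applied to $\Delta_q(Q)$ as well as to $Q$) is exactly the point that needs care, and you handle it correctly. What your route buys is that it avoids any explicit computation with $q$-binomial coefficients at negative arguments, staying entirely within facts already proved in the appendix; what the paper's route buys is brevity, since the verification on the $B_k$ is a one-line identity. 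Both arguments are complete and rest on the same two pillars, \Cref{basis_Aq} and the behaviour of $\Delta_q$ on the basis.
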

\begin{proof}
  The first statement holds because this is true for all basis
  elements $B_k$ by an easy computation. The second one follows from
  the first and \eqref{def_delta}.
\end{proof}

\begin{lemma}
  \label{unshift}
  For $k \geq 0$, there holds
  $B_k((x-1)/q) = q^{-k} \left(B_k(x) - B_{k-1}(x) \right)$.
\end{lemma}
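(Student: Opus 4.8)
The plan is to evaluate $B_k((x-1)/q)$ directly from the product formula \eqref{defi:B} and match the result with $q^{-k}(B_k(x)-B_{k-1}(x))$. The case $k=0$ is immediate from the conventions $B_0=1$ and $B_{-1}=0$, so I would assume $k\geq 1$.

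First I would substitute $(x-1)/q$ into each factor appearing in \eqref{defi:B} and simplify using $[j]_q - q^{j-1} = [j-1]_q$:
\begin{equation*}
  [j]_q + q^j\cdot\frac{x-1}{q} = [j]_q - q^{j-1} + q^{j-1}x = [j-1]_q + q^{j-1}x .
\end{equation*}
Taking the product over $j=1,\dots,k$ and reindexing with $i=j-1$ gives
\begin{equation*}
  B_k\!\left(\frac{x-1}{q}\right) = \frac{\prod_{i=0}^{k-1}\left([i]_q+q^i x\right)}{[k]!_q}.
\end{equation*}
The $i=0$ factor is $[0]_q+x=x$; pulling it out and using $[k]!_q=[k]_q\,[k-1]!_q$ yields
\begin{equation*}
  B_k\!\left(\frac{x-1}{q}\right) = \frac{x}{[k]_q}\cdot\frac{\prod_{i=1}^{k-1}\left([i]_q+q^i x\right)}{[k-1]!_q} = \frac{x}{[k]_q}\,B_{k-1}(x).
\end{equation*}

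It then remains to observe that the right-hand side of the claimed identity equals the same thing. From \eqref{defi:B} one reads off the one-step recurrence $B_k(x)=\frac{[k]_q+q^k x}{[k]_q}\,B_{k-1}(x)$, hence $B_k(x)-B_{k-1}(x)=\frac{q^k x}{[k]_q}\,B_{k-1}(x)$, and multiplying by $q^{-k}$ gives exactly $\frac{x}{[k]_q}\,B_{k-1}(x)$.

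Every step is an elementary manipulation of the defining product, so I do not expect a genuine obstacle; the only points needing a bit of care are handling $k=0$ (and the convention $B_{-1}=0$) separately and keeping track of the shift $i=j-1$ together with the vanishing term $[0]_q=0$. Alternatively, one could deduce the identity from the already established relation $\Delta_q(B_k)=B_{k-1}$, which rewrites as $B_k((x-1)/q)=B_k(x)-(1+(q-1)x)B_{k-1}(x)$, and then apply the same recurrence for $B_k$; this is essentially the same computation organised differently.
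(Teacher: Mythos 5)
Your computation is correct. Substituting $(x-1)/q$ into each factor of \eqref{defi:B} and using $[j]_q-q^{j-1}=[j-1]_q$ does give $\prod_{i=0}^{k-1}([i]_q+q^ix)/[k]!_q=\frac{x}{[k]_q}B_{k-1}(x)$, and the one-step recurrence $B_k(x)=\frac{[k]_q+q^kx}{[k]_q}B_{k-1}(x)$ shows the right-hand side equals the same expression; the $k=0$ case is handled by the conventions. The paper takes a slightly different route: it evaluates both sides at the $q$-integers $[n]_q$ for $n\geq 1$, where $([n]_q-1)/q=[n-1]_q$, so that the identity becomes the $q$-Pascal rule $\qbinom{n+k}{k}=q^k\qbinom{n-1+k}{k}+\qbinom{n-1+k}{k-1}$ for $q$-binomial coefficients, and then concludes by the fact that two polynomials agreeing at infinitely many points coincide. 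Your argument is more self-contained, since it establishes the polynomial identity directly from the product formula without invoking the evaluation-at-$q$-integers principle or an external binomial identity; the paper's version is shorter on the page because it outsources the computation to a standard fact. Both are complete proofs.
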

\begin{proof}
  The special case $k=0$ is immediate. It is enough to prove this
  equality when evaluated at $[n]_q$ for $n \geq 1$. This
  reduces to a standard property of $q$-binomial coefficients.
\end{proof}

The following statement can be found for instance in \cite[Lem.
4.7]{q-ehrhart}.
\begin{lemma}
  \label{serie_B}
  For $k \geq 0$,
  \begin{equation}
    \sum_{n \geq 0} B_k([n]_q) t^n = \frac{1}{\prod_{\ell=0}^{k}(1-q^\ell t)}.
  \end{equation}
  
\end{lemma}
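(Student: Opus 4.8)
The plan is to reduce the identity to a first-order recurrence in $k$ for the generating series $F_k(t) := \sum_{n\geq 0} B_k([n]_q)\,t^n$, using the shift identity already established in \Cref{unshift}. The starting observation is that $([n]_q-1)/q = [n-1]_q$ for every $n$, so substituting $x=[n]_q$ into \Cref{unshift} yields, for all $n\geq 1$ and $k\geq 0$,
\begin{equation*}
  B_k([n]_q) = q^k\, B_k([n-1]_q) + B_{k-1}([n]_q),
\end{equation*}
where the $k=0$ instance is the trivial $1 = 1 + 0$ (recall $B_{-1}=0$).

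Next I would turn this into a functional equation. Since $[0]_q = 0$, one has $B_k([0]_q) = [k]!_q/[k]!_q = 1$ for every $k\geq 0$. Isolating the $n=0$ term of $F_k$ and feeding the recurrence into the tail gives, for $k\geq 1$,
\begin{equation*}
  F_k(t) = 1 + q^k t\,F_k(t) + \bigl(F_{k-1}(t)-1\bigr) = q^k t\,F_k(t) + F_{k-1}(t),
\end{equation*}
hence $(1-q^k t)\,F_k(t) = F_{k-1}(t)$. The base case is immediate: $B_0 = 1$, so $F_0(t) = \sum_{n\geq 0} t^n = 1/(1-t)$. An induction on $k$ using the displayed relation then gives $F_k(t) = 1/\prod_{\ell=0}^{k}(1-q^\ell t)$, which is the claim.

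There is no genuine difficulty here; the only points requiring a little attention are the elementary identity $([n]_q-1)/q = [n-1]_q$ that converts \Cref{unshift} into a recurrence in $n$, and the bookkeeping of the $n=0$ term when passing to the functional equation (where the case $k=0$ must be handled separately, since the argument uses $B_{k-1}([0]_q)=1$). Alternatively, one may observe that $B_k([n]_q) = \qbinom{n+k}{k}$ and simply cite the classical expansion of $\prod_{\ell=0}^{k}(1-q^\ell t)^{-1}$ as the generating function of the $q$-binomial coefficients, as in \cite[Lem. 4.7]{q-ehrhart}.
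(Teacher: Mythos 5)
Your argument is correct, and it is genuinely more self-contained than what the paper offers: the paper gives no proof of this lemma at all, only a pointer to Lemma~4.7 of \cite{q-ehrhart}, which is exactly the ``classical expansion of $\prod_{\ell=0}^{k}(1-q^\ell t)^{-1}$ as the generating function of the $q$-binomial coefficients $\qbinom{n+k}{k}$'' that you mention as your alternative. Your main route instead derives the first-order recurrence $B_k([n]_q) = q^k B_k([n-1]_q) + B_{k-1}([n]_q)$ from \Cref{unshift} together with the identity $([n]_q-1)/q=[n-1]_q$, turns it into the functional equation $(1-q^k t)\,F_k(t)=F_{k-1}(t)$, and inducts from $F_0(t)=1/(1-t)$. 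All the details check out: $B_k([0]_q)=1$ is correct, the $n=0$ bookkeeping and the separate treatment of $k=0$ are handled, and there is no circularity, since \Cref{unshift} precedes the present lemma and is proved independently via the $q$-Pascal rule. What your approach buys is a proof internal to the paper's appendix, at essentially no extra cost since \Cref{unshift} is already available; either justification would be acceptable here.
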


The action of $\Delta_q$ can be translated into an action on sequences of Laurent polynomials as follows.

Abusing notation, let $\Delta_q$ be the linear operator acting on
sequences $(a_n)_{n \in \ZZ}$ of Laurent polynomials by the formula
\begin{equation}
  \label{delta_on_seq}
  (\Delta_q(a))_n = (a_n - a_{n-1}) / q^n.
\end{equation}
This is the same as \eqref{def_delta} when $a_n = P([n]_q)$.

\begin{lemma}
  \label{annihile}
  A sequence of Laurent polynomials $(a_n)_{n \geq 0}$ is the sequence
  of values of an element of $A_q$ at q-integers $[n]_q$ with
  $n \geq 0$ if and only if it is annihilated by some power of
  $\Delta_q$.
\end{lemma}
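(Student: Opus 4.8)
The plan is to prove both directions, using Proposition \ref{basis_Aq} as the main tool. First I would handle the forward direction: suppose $a_n = P([n]_q)$ for some $P \in A_q$. By Proposition \ref{basis_Aq}, write $P = \sum_{k=0}^d c_k B_k$ with $c_k \in \ZZ[q,q^{-1}]$ and $c_d \neq 0$. Since $\Delta_q$ maps $B_k$ to $B_{k-1}$ (and kills $B_0 = 1$, as $B_{-1}=0$), applying $\Delta_q$ a total of $d+1$ times annihilates $P$, hence annihilates the sequence $(a_n)$ by the identification \eqref{delta_on_seq} with \eqref{def_delta}. So $\Delta_q^{d+1}(a) = 0$.

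For the converse, suppose $(a_n)_{n \geq 0}$ is a sequence of Laurent polynomials with $\Delta_q^{N}(a) = 0$ for some $N$. I would argue by induction on the minimal such $N$. If $N = 1$, then $a_n = a_{n-1}/q^n$ for all $n \geq 1$; unwinding, $a_n = q^{-\binom{n+1}{2}} a_0$, but this is a Laurent polynomial for all $n \geq 0$ only if... wait — actually the cleaner base case is $N=0$, meaning $a$ is the zero sequence, realized by $P = 0$. For the inductive step with $\Delta_q^{N}(a) = 0$, set $b = \Delta_q(a)$, so $\Delta_q^{N-1}(b) = 0$; by induction $b_n = R([n]_q)$ for some $R \in A_q$, say $R = \sum_{k} c_k B_k$. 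Lift $R$ to $\tilde P = \sum_k c_k B_{k+1}$, which lies in $A_q$ and satisfies $\Delta_q(\tilde P) = R$. Then $a_n - \tilde P([n]_q)$ is a sequence killed by $\Delta_q$, i.e.\ $c_n := a_n - \tilde P([n]_q)$ satisfies $c_n = c_{n-1}/q^n$ for $n \geq 1$. The point is that such a sequence of Laurent polynomials must be identically zero: $c_n = q^{-(1+2+\cdots+n)} c_0$ is a Laurent polynomial for every $n$ only if $c_0 = 0$ (its $q$-adic valuation would have to go to $-\infty$), hence all $c_n = 0$. Therefore $a_n = \tilde P([n]_q)$ with $\tilde P \in A_q$.

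The main obstacle is the step showing that a sequence of Laurent polynomials annihilated by $\Delta_q$ alone must vanish — without the Laurent-polynomiality constraint the recursion $c_n = c_{n-1}/q^n$ has a one-parameter family of solutions, so one genuinely needs to invoke that each $c_n$ lies in $\ZZ[q,q^{-1}]$ (or even $\QQ(q)$ with bounded pole order) to force $c_0 = 0$; the $q$-adic valuation argument is the cleanest way to see this. Everything else is bookkeeping with the operator identities already established: $\Delta_q(B_k) = B_{k-1}$ from the lemma after \eqref{def_delta}, and the compatibility of \eqref{def_delta} with \eqref{delta_on_seq} noted right after \eqref{delta_on_seq}.
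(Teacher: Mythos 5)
Your overall strategy for the converse is the same induction as the paper's, just run in the opposite direction: the paper peels off the top term (it notes that $\Delta_q^{d}(a)$ is a constant $c$ and subtracts $c\,B_d([n]_q)$ to lower the order), while you peel off from the bottom by realizing $b=\Delta_q(a)$ and lifting along $\Delta_q(B_{k+1})=B_k$. That part is fine. The problem is the last step: you have misread the operator \eqref{delta_on_seq}. The condition $(\Delta_q(c))_n=(c_n-c_{n-1})/q^n=0$ forces $c_n=c_{n-1}$, not $c_n=c_{n-1}/q^n$. Hence the kernel of $\Delta_q$ on sequences consists of the \emph{constant} sequences, not only the zero sequence, and the claim you identify as the "main obstacle" --- that a sequence of Laurent polynomials annihilated by $\Delta_q$ must vanish --- is simply false (take $c_n=1$ for all $n$). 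Your $q$-adic valuation argument is therefore solving a recursion that never arises, and the conclusion $a_n=\tilde P([n]_q)$ is wrong in general: one can only conclude $a_n=\tilde P([n]_q)+c_0$ for some fixed Laurent polynomial $c_0$.

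Fortunately the repair is one line and makes the proof easier rather than harder: since $c_n=a_n-\tilde P([n]_q)$ is constant equal to some $c_0\in\ZZ[q,q^{-1}]$, the polynomial $P=\tilde P+c_0=\tilde P+c_0B_0$ lies in $A_q$ and satisfies $a_n=P([n]_q)$ for all $n\geq 0$. With that correction your argument is complete and equivalent in substance to the paper's.
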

\begin{proof}
  Assume first that $a_n= P([n]_q)$ for some element $P$ of $A_q$ and
  for all $n \geq 0$. Then because the operator $\Delta_q$ acts by
  sending the basis element $B_{k}$ to $B_{k-1}$, iterating $\Delta_q$
  sufficiently gives the constant sequence $0$.

  Conversely, one proceeds by induction on $d$ where the sequence $a$
  is annihilated by the power $d+1$ of $\Delta_q$. If the sequence $a$
  is identically zero, the statement is obvious.

  Otherwise, the sequence $\Delta_q^{d}(a)$ is a constant Laurent
  polynomial $c$. The sequence $a'_n = a_n - c B_d([n]_q)$ is then
  annihilated by $\Delta_q^{d}$, hence by induction $a'(n)= P([n]_q)$
  for some element $P$ of $A_q$. The statement follows.
\end{proof}

For integers $a$ and $b$, let us introduce the following polynomials
\begin{equation}
  \label{def:qbi}
  \qbinom{a ; x}{b} = \frac{\prod_{j=a-b+1}^{a} [j]_q + q^j x} {[b]_q!}.
\end{equation}
Their evaluation at $x=[n]_q$ is the standard $q$-binomial coefficient
$\qbinom{a+n}{b}$. Note that $B_k(x) = \qbinom{k;x}{k}$ for $k \geq 0$.

These polynomials are useful in the following correspondence.

\begin{proposition}
  Consider the generating series
  \begin{equation}
    \sum_{n \geq 0} Z_n t^n = \frac{\sum_{k=0}^{d} h_{k} t^{k} }{\prod_{\ell=0}^{d}(1-q^{\ell} t)},
  \end{equation}
  where $h_k$ are arbitrary coefficients. Then the $Z_n$ are the
  values at $[n]_q$ of the polynomial
  \begin{equation}
    \sum_{k=0}^{d} h_k \qbinom{d-k ; x}{d}.
  \end{equation}
\end{proposition}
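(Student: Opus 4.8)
The plan is to verify the identity by comparing generating series, using \Cref{serie_B} as the basic building block. First I would recall that by \eqref{def:qbi} one has the shift relation $\qbinom{d-k;x}{d} = B_d$ composed with the iterated unshift operator; more precisely, the polynomial $\qbinom{d-k;x}{d}$ is exactly $B_d$ with $x$ replaced $k$ times by $(x-1)/q$, which one checks directly from the product formulas \eqref{defi:B} and \eqref{def:qbi} (the product $\prod_{j=d-k+1}^d$ is just $\prod_{j=1}^d$ shifted down by $k$). Alternatively, and perhaps more cleanly, I would evaluate at $x=[n]_q$: the claim becomes that $\qbinom{d-k+n}{d}$ are the coefficients of $t^n$ in $t^k/\prod_{\ell=0}^d(1-q^\ell t)$, which is the assertion that shifting the $q$-binomial argument down by $k$ corresponds to multiplying the generating series by $t^k$.

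The key computational step is therefore: for each fixed $k$ with $0 \le k \le d$,
\begin{equation*}
  \sum_{n \geq 0} \qbinom{(d-k)+n}{d} t^n = \frac{t^k}{\prod_{\ell=0}^{d}(1-q^\ell t)}.
\end{equation*}
By \Cref{serie_B}, which states $\sum_{n\geq 0} B_d([n]_q) t^n = \sum_{n\geq 0}\qbinom{d+n}{d} t^n = 1/\prod_{\ell=0}^d(1-q^\ell t)$, it suffices to show that replacing the numerator $d+n$ by $d-k+n$ multiplies the series by $t^k$. For $k=0$ this is exactly \Cref{serie_B}. For the inductive step, \Cref{unshift} gives $B_d((x-1)/q) = q^{-d}(B_d(x) - B_{d-1}(x))$; evaluating at $[n]_q$ and using $([n]_q - 1)/q = [n-1]_q$, this reads $\qbinom{d-1+n}{d} = q^{-d}\bigl(\qbinom{d+n}{d} - \qbinom{d+n-1}{d}\bigr)$, i.e. in terms of sequences, applying $\Delta_q$ from \eqref{delta_on_seq} to $(\qbinom{d+n}{d})_n$ yields $(\qbinom{d-1+n}{d})_n$. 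Since on generating series $\Delta_q$ acts — up to the $1/q^n$ twist — like a shift by $t$, iterating $k$ times produces the factor $t^k$; the $q$-powers bookkeeping works out because the extra factor $(1-q^d t)$ never appears (the numerator polynomial $B_d$ already has degree $d$, so the pole structure is preserved). I would spell this out by checking directly that the sequence $(\qbinom{d-k+n}{d})_{n\geq 0}$, extended by $0$ for $n<k$ where the $q$-binomial vanishes, is annihilated by $\Delta_q^{d+1}$ (\Cref{annihile}) and has the stated generating series by comparing leading behaviour.

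Finally, once the single-$k$ identity is established, the proposition follows by linearity: summing $h_k$ times the $k$-th identity over $0 \le k \le d$ matches the given series $\sum_k h_k t^k / \prod_\ell (1-q^\ell t)$ on the left with the generating series of the values of $\sum_k h_k \qbinom{d-k;x}{d}$ on the right, and since two elements of $A_q$ agreeing at all $[n]_q$ for $n\geq 0$ are equal (\Cref{basis_Aq}), the polynomial identity holds. I expect the main obstacle to be the careful handling of the $q^n$-twist in \eqref{delta_on_seq}: one must be sure that the correspondence "shift the $q$-binomial upper argument down by one $\leftrightarrow$ multiply generating series by $t$" is exact and does not introduce spurious powers of $q$. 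This is precisely controlled by \Cref{unshift} and \Cref{serie_B}, so the argument reduces to invoking those two lemmas correctly rather than to any genuinely new computation.
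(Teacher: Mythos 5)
Your overall plan is the same as the paper's: establish, by induction on $k$ starting from \Cref{serie_B}, the identity $\sum_{n\ge 0}\qbinom{d-k+n}{d}t^n = t^k/\prod_{\ell=0}^d(1-q^\ell t)$, and then conclude by linearity. However, the inductive step as you wrote it does not work. From \Cref{unshift} with $k=d$, evaluating at $x=[n]_q$ gives
\begin{equation*}
  \qbinom{d-1+n}{d} = q^{-d}\left(\qbinom{d+n}{d} - \qbinom{d+n-1}{d-1}\right),
\end{equation*}
with lower index $d-1$ in the last term, since that term is $B_{d-1}([n]_q)$ and not $B_d([n-1]_q)$. The identity you displayed, with $\qbinom{d+n-1}{d}$ in that place, is false: for $d=2$, $n=1$ the left side is $1$ while your right side is $1+q^{-1}$. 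Correspondingly, applying $\Delta_q$ to the sequence $(\qbinom{d+n}{d})_n$ yields $(\qbinom{d-1+n}{d-1})_n$ — it sends $B_d$ to $B_{d-1}$, lowering the \emph{lower} index and the degree — not $(\qbinom{d-1+n}{d})_n$. On generating series, $\Delta_q$ strips the factor $(1-q^d t)$ from the denominator; it is not "multiplication by $t$ up to the $q^n$-twist", so it cannot drive the induction on $k$ at fixed lower index $d$.

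The good news is that the correct inductive step is simpler and needs no lemma at all: multiplying $\sum_{n\ge 0}\qbinom{d-k+n}{d}t^n$ by $t$ reindexes it as $\sum_{m\ge 1}\qbinom{d-(k+1)+m}{d}t^m$, and the missing $m=0$ term $\qbinom{d-k-1}{d}$ vanishes because $0\le d-k-1<d$ when $0\le k\le d-1$. With that replacement your argument goes through and coincides with the paper's (very terse) proof; the final appeal to linearity, and to the fact that a polynomial is determined by its values at all $[n]_q$ for $n\ge 0$, is fine as stated.
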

\begin{proof}
  Starting from \Cref{serie_B} corresponding to $k=0$, one can obtain
  by induction on $k$ the equality
  \begin{equation*}
    \sum_{n \geq 0} \qbinom{d-k+n}{d} t^n = \frac{ t^{k} }{\prod_{\ell=0}^{d}(1-q^{\ell} t)}.
  \end{equation*}
  The statement readily follows.
\end{proof}

Thee polynomials $\qbinom{a ; x}{b}$ also have a very simple limit at $q=0$, once normalized appropriately.

\begin{lemma}
  \label{limit_q0}
  For $0 \leq i \leq d$, the polynomial
  $\qbinom{d-i ; x}{d} q^{\binom{i}{2}}$ has a limit when
  $q=0$ and this limit is $1$ if $i=0$ and $x(x-1)^i$ otherwise.
\end{lemma}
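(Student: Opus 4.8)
The plan is to work directly from the defining product \eqref{def:qbi}, which gives
\begin{equation*}
  \qbinom{d-i ; x}{d} = \frac{\prod_{j=1-i}^{d-i} \bigl([j]_q + q^j x\bigr)}{[d]_q!}.
\end{equation*}
The $d$ linear factors in the numerator are indexed by $j$ running over the consecutive integers $1-i, 2-i, \ldots, d-i$. I would split this range into three blocks according to the sign of $j$: the negative indices $j \in \{1-i, \ldots, -1\}$, the single index $j=0$ (present exactly when $i \geq 1$, since then $0$ lies in the range, but absent when $i=0$), and the positive indices $j \in \{1, \ldots, d-i\}$. The positive block, together with the denominator $[d]_q! = \prod_{j=1}^{d}[j]_q$, is what survives to give $1$; the $j=0$ factor contributes the $x$; and the negative block, after the normalization by $q^{\binom{i}{2}}$, contributes the $(x-1)^i$.

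**The three computations.**
First, handle $i=0$ separately: then $\qbinom{d ; x}{d} = B_d(x) = \prod_{j=1}^{d}\bigl([j]_q+q^jx\bigr)/[d]_q!$, and since $[j]_q \to 1$ and $q^j \to 0$ as $q \to 0$ for each $j \geq 1$, the whole expression tends to $1/1 = 1$; the factor $q^{\binom{0}{2}} = q^0 = 1$ changes nothing. For $i \geq 1$, I would first observe that the $j=0$ factor is exactly $[0]_q + q^0 x = x$, which I pull out. Next, for the positive indices $j = 1, \ldots, d-i$, the factor $[j]_q + q^j x$ tends to $1$ as $q\to 0$; I would pair each with the factor $[j]_q$ in the denominator, noting $[j]_q \to 1$ as well, so these $d-i$ denominator factors are accounted for and the ratio tends to $1$. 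That leaves $i$ unpaired factors $[j]_q$ in the denominator, namely $j = d-i+1, \ldots, d$; these also tend to $1$. So all that remains is the negative block $\prod_{j=1-i}^{-1}\bigl([j]_q + q^j x\bigr)$, and I must show that multiplying it by $q^{\binom{i}{2}}$ gives a quantity tending to $(x-1)^i$.

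**The key factor.** For a negative index $j = -m$ with $m \geq 1$ one has $[-m]_q = (q^{-m}-1)/(q-1) = -q^{-m}[m]_q$ (a Laurent polynomial), so
\begin{equation*}
  [-m]_q + q^{-m}x = q^{-m}\bigl(x - [m]_q\bigr).
\end{equation*}
As $j$ ranges over $-1, -2, \ldots, -(i-1)$, i.e.\ $m = 1, \ldots, i-1$, the product of these factors is $q^{-(1+2+\cdots+(i-1))}\prod_{m=1}^{i-1}\bigl(x-[m]_q\bigr) = q^{-\binom{i}{2}}\prod_{m=1}^{i-1}\bigl(x-[m]_q\bigr)$. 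Hence multiplying by $q^{\binom{i}{2}}$ exactly cancels the pole, leaving $\prod_{m=1}^{i-1}\bigl(x-[m]_q\bigr)$, and since $[m]_q \to 1$ for every $m\geq 1$ as $q\to 0$, this tends to $(x-1)^{i-1}$. Combining with the pulled-out factor $x$ from the $j=0$ term, the limit is $x(x-1)^{i-1}$. Wait — I should double-check the exponent: the negative range $1-i \leq j \leq -1$ contains $i-1$ values, giving $(x-1)^{i-1}$, and together with the single $j=0$ factor $x$ this is $x(x-1)^{i-1}$, not $x(x-1)^i$ as stated. The discrepancy suggests the range in \eqref{def:qbi} should be read as $j$ from $a-b+1$ to $a$ with $a = d-i$, $b=d$, giving $j$ from $1-i$ to $d-i$ as I used; so I would re-examine whether the intended normalization is $q^{\binom{i}{2}}$ against $i$ negative-or-zero factors. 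In fact the cleanest bookkeeping is to treat the block $j \leq 0$ as a whole: it has $i$ factors (namely $j = 0, -1, \ldots, 1-i$), the factor at $j=0$ is $x$ and needs no $q$-power, while the factors at $j = -1, \ldots, 1-i$ carry the total pole $q^{-\binom{i}{2}}$ and tend, after clearing it, to $(x-1)^{i-1}$. So the honest conclusion of this computation is $x(x-1)^{i-1}$; if the lemma as printed reads $x(x-1)^i$ I would flag it, but more likely I have mis-indexed and the product in \eqref{def:qbi} for $\qbinom{d-i;x}{d}$ runs over $i$ negative indices plus the zero index, which happens precisely if one instead uses $\qbinom{d-i;x}{d}$ with the convention that makes $B_d = \qbinom{d;x}{d}$ shift correctly — the safe move in the write-up is to state the product range explicitly and let the index count speak for itself.

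**Main obstacle.** The only real subtlety is the index bookkeeping in the previous paragraph: getting exactly the right count of negative factors, verifying that $\binom{i}{2}$ (and not $\binom{i-1}{2}$ or $\binom{i+1}{2}$) is the pole order, and confirming the final power of $(x-1)$ matches the statement. Everything else is the elementary limits $[m]_q \to 1$ and $q^m \to 0$ as $q \to 0$. I would carry out the proof by first reducing to evaluating at $x = [n]_q$ is \emph{not} available here since we need the polynomial identity in $x$, so instead I work with the explicit product and take the coefficient-wise limit in $\QQ[x]$, which is legitimate because after multiplying by $q^{\binom{i}{2}}$ every coefficient is a genuine rational function of $q$ regular at $q=0$.
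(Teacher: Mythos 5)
Your computation is correct and is exactly the paper's argument: the paper's proof of this lemma is just the one-line remark that one works term-by-term in the explicit product \eqref{def:qbi} and distributes $q^{\binom{i}{2}}$ over the factors with negative index to compensate the poles, which is precisely your "key factor" step $[-m]_q+q^{-m}x=q^{-m}(x-[m]_q)$. Your index count is also right, and the discrepancy you flagged is a genuine off-by-one typo in the printed statement: for $i\geq 1$ the limit is $x(x-1)^{i-1}$, not $x(x-1)^{i}$. You can confirm this without redoing the bookkeeping in two independent ways. First, $\qbinom{d-i;x}{d}$ has degree $d$ in $x$, so for $i=d$ its coefficient-wise limit cannot be the degree-$(d+1)$ polynomial $x(x-1)^{d}$. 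Second, in the application in Section \ref{sect6} one takes $S=[1,\ldots,j]$, hence $i=\#S+1=j+1$ and $q^{\Sigma S}=q^{\binom{i}{2}}$, and the displayed conclusion $\sum_{j}\beta_P([1,\ldots,j])\,x(x-1)^{j}$ is exactly $x(x-1)^{i-1}$ per term. So your proof establishes the corrected statement; the only change to make in the write-up is to drop the hedging at the end and assert the exponent $i-1$ outright, noting that the coefficient-wise limit in $\QQ[x]$ is legitimate because after multiplication by $q^{\binom{i}{2}}$ every coefficient is regular at $q=0$, as you already observed.
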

\begin{proof}
  One uses the explicit product Formula \eqref{def:qbi} for these polynomials. The statement is clear if
  $i=0$. When $i>0$, this is a simple computation term-by-term in the
  product, by distributing the factor $q^{\binom{i}{2}}$ in the
  appropriate factors to compensate for poles in $q$.
\end{proof}







\section{$q$-analogue of the incidence algebra}
\label{app:q-incidence}

The aim of this section is to explain an interpretation of the values
of $q$-Zeta polynomials as corner coefficients of powers of
matrix-like objects, similar to the classical case taking place in the
incidence algebra.

The idea of a possible $q$-analogue of the incidence algebra has been
alluded to in the last paragraph of \cite{stanley_local}.

Let us consider a poset $P$ endowed with the height function $h$.

Let $\Inc_q(P,h)$ be the vector space of square matrices $A$ with
rows and columns indexed by $P$, with coefficients in the ring of
Laurent polynomials in $q$, and such that $A_{x,y} = 0$ if $x$ is not
smaller than or equal to $y$ in $P$. Let $D_h$ be the diagonal matrix with
coefficients $q^{h(x)}$ for $x \in P$.

The space $\Inc_q(P,h)$ is endowed with the following bilinear product
\begin{equation}
  (A \times_q B)_{x,z} = A D_h B.
\end{equation}

The product $\times_q$ is associative, with the diagonal matrix
$D_h^{-1}$ as unit.

This ring is a twisted version of the usual incidence
algebra, to which it reduces when $q=1$. It can be identified with a
sub-ring of the usual ring of square matrices by multiplying all
matrices on the left by $D_h$.

\medskip

Let $Z \in \Inc_q(P,h)$ be the zeta matrix of $P$, defined by
$Z_{x,y}=1$ for all relations $x \leq y$ in $P$ and $0$ otherwise. The
matrix $Z$ is invertible in $\Inc_q(P,h)$, because it is upper triangular with
invertible diagonal.

\medskip

Let now $P$ be a bounded poset. Let $H$ be the maximal value of $h$ on
$P$. For $n \in \ZZ$, let $Z^{\times_q n}$ be the $n$-th power of $Z$
in $\Inc_q(P,h)$.

We will use the ring $A_q$ and the operator $\Delta_q$ as defined in
\S \ref{app:q-ring}.

\begin{proposition}7
  The sequence of matrices $(Z^{\times_q n})_{n \in \ZZ}$ is
  annihilated by $\Delta_q^{H+1}$.  For all $n \in \ZZ$, the value of
  the $q$-Zeta polynomial $\qZ_{P,h}([n]_q)$ is the coefficient of
  index $(\bott,\topp)$ in the matrix $Z^{\times_q n}$.
\end{proposition}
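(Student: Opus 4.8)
The plan is to prove the two assertions essentially simultaneously, by identifying the corner coefficient of $Z^{\times_q n}$ with the chain-sum formula \eqref{qZ_sum_over_chains} and then invoking \Cref{annihile}. First I would unwind the definition of the twisted product: by associativity of $\times_q$ with unit $D_h^{-1}$, the $n$-th power $Z^{\times_q n}$ for $n \geq 1$ equals, as an ordinary matrix product, $D_h^{-1}(Z D_h)^{n-1}Z$ — more precisely, inserting the factors $D_h$ between consecutive copies of $Z$ shows that the $(\bott,\topp)$ entry is a sum over sequences $\bott = e_0 \leq e_1 \leq \cdots \leq e_{n-1} = \topp$ (or rather over sequences $e_0 \leq e_1 \leq \cdots$ with the first and last fixed) of the product of the intermediate diagonal weights. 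Carefully tracking which elements receive a factor $q^{h(\cdot)}$ from the inserted $D_h$'s, one sees the corner coefficient of $Z^{\times_q n}$ is $\sum q^{h(e_1)+\cdots+h(e_{n-2})}$ over chains $\bott = e_0 \leq e_1 \leq \cdots \leq e_{n-1}=\topp$; after relabelling, this is exactly the sum in \eqref{qZ_sum_over_chains} for the poset $P$ (the fixed endpoints $\bott$ and $\topp$ do not contribute to the $q$-power because of the placement of the $D_h$ factors, or one absorbs their contribution and checks it matches — this bookkeeping is the one genuinely fiddly point). Hence for $n \geq 2$ the corner coefficient equals $\qZ_{P,h}([n]_q)$.

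Next I would handle general $n \in \ZZ$. The sequence $(Z^{\times_q n})_{n}$ is a sequence of matrices with entries in $\ZZ[q,q^{-1}]$; applying $\Delta_q$ entrywise (in the sense of \eqref{delta_on_seq}) to this sequence is the same as applying $\Delta_q$ to each matrix entry separately. By the corner-coefficient computation, the $(\bott,\topp)$ entries for $n \geq 0$ agree with $\qZ_{P,h}([n]_q)$, and since $\qZ_{P,h} \in A_q$ has degree at most $H$ by the Definition, \Cref{annihile} (via \Cref{basis_Aq}) shows that the scalar sequence of corner coefficients is annihilated by $\Delta_q^{H+1}$. To get the same for the whole matrix sequence, I would run the analogous argument for every entry $(x,y)$ with $x \leq y$: the $(x,y)$ entry of $Z^{\times_q n}$ is, by the same unwinding, a chain sum over chains from $x$ to $y$ inside the interval $[x,y]$, hence up to a power of $q$ it is $\qZ_{[x,y],h}([n]_q)$ for the subinterval poset, which again lies in $A_q$ with degree at most $h(y)-h(x) \leq H$. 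So each entry, as a sequence in $n$, is killed by $\Delta_q^{H+1}$, and therefore so is the matrix sequence. This simultaneously shows that $Z^{\times_q n}$ makes sense for all $n \in \ZZ$ (its entries are the unique polynomial continuations) and that the corner coefficient equals $\qZ_{P,h}([n]_q)$ for all $n \in \ZZ$, since two elements of $A_q$ agreeing on all $[n]_q$, $n \geq 0$, are equal and hence agree on all $[n]_q$, $n \in \ZZ$.

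The main obstacle I anticipate is purely the bookkeeping in the first step: getting the exponent of $q$ in the corner coefficient of $D_h^{-1}(Z D_h)^{n-1}Z$ to match $\sum_j h(e_j)$ in \eqref{qZ_sum_over_chains} exactly, including correctly accounting for the leading $D_h^{-1}$ and the roles of the fixed endpoints $\bott,\topp$. One clean way to organize this is to note that the $(x,z)$ entry of a $k$-fold product $A^{(1)} D_h A^{(2)} D_h \cdots D_h A^{(k)}$ is $\sum q^{h(y_1)+\cdots+h(y_{k-1})} A^{(1)}_{x,y_1}\cdots A^{(k)}_{y_{k-1},z}$ over all chains $x \leq y_1 \leq \cdots \leq y_{k-1}\leq z$, and then apply this with all $A^{(i)} = Z$ (so all matrix entries are $1$) and compare indices with \eqref{qZ_sum_over_chains}; the apparent mismatch of one factor of $D_h^{-1}$ on the far left is exactly what is needed to cancel an unwanted $q^{h(\bott)}$, and one checks this against \Cref{valeur_1} or \Cref{ex1} as a sanity test. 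Everything else is either a direct appeal to \Cref{annihile} or the elementary uniqueness statement for polynomials in $A_q$.
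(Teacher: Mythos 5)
Your treatment of the range $n \geq 2$ matches the paper's: identify the $(\bott,\topp)$ entry of the product $Z D_h Z D_h \cdots Z$ with the chain sum \eqref{qZ_sum_over_chains}, do the same for every entry $(x,y)$ by restricting to the interval, and conclude via \Cref{annihile} that each entry sequence is killed by $\Delta_q^{H+1}$. (Your formula $D_h^{-1}(ZD_h)^{n-1}Z$ for the twisted power carries a spurious leftmost $D_h^{-1}$ --- the correct identification $A \mapsto D_h A$ gives $Z^{\times_q n} = Z(D_hZ)^{n-1}$, whose corner entry is exactly $\sum q^{h(e_1)+\cdots+h(e_{n-1})}$ with no correction factor --- but this is the bookkeeping you flagged and would presumably fix.)

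The genuine gap is in the passage to all $n \in \ZZ$. The matrices $Z^{\times_q n}$ for $n \leq 1$ are already defined, as powers of the inverse of $Z$ in $\Inc_q(P,h)$; the proposition asserts that these \emph{algebraically defined} negative powers have entries given by the polynomial continuation, and this is precisely what makes the proposition imply \Cref{valeur_moins1} and feed into the Eulerian reciprocity of Section \ref{sect7}. Your proposal instead declares that the annihilation for $n \geq 2$ ``simultaneously shows that $Z^{\times_q n}$ makes sense for all $n \in \ZZ$ (its entries are the unique polynomial continuations)'' --- but that conflates a definition with the statement to be proved, and knowing that a sequence restricted to $n \geq 2$ is polynomial says nothing a priori about its values at $n \leq 1$ when those values are produced by matrix inversion. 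The paper closes this gap with one short but essential computation: for any $N \in \ZZ$,
\begin{equation*}
  \Delta_q^{H+1}\bigl(Z^{\times_q n}\bigr)_{n \geq N} = Z^{\times_q (N-2)} \times_q \Delta_q^{H+1}\bigl(Z^{\times_q n}\bigr)_{n \geq 2} = 0,
\end{equation*}
which works because the sequence indexed by $n \geq N$ is the fixed invertible matrix $Z^{\times_q(N-2)}$ times the sequence indexed by $n \geq 2$, and the operator $\Delta_q$ of \eqref{delta_on_seq} acts only on the index $n$ and hence commutes with that fixed multiplication. You need this (or an equivalent use of the invertibility of $Z$) to conclude; without it your argument establishes the proposition only for $n \geq 2$.
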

\begin{proof}
  For $n \geq 2$, the definition of the matrix $Z^{\times_q n}$ as a product
  $ZD_h ZD_h\cdots Z$ implies directly that its corner coefficient is
  the weighted sum over all chains
  $\bott \leq e_1 \leq \cdots \leq e_{n-1} \leq \topp$, where the
  weight is $q^{\sum_j h(e_j)}$. This is exactly
  \eqref{qZ_sum_over_chains}.  This implies that the sequence
  $(Z^{\times_q n}_{\bott,\topp})_{n \geq 2}$ is annihilated by $\Delta_q^{H+1}$.

  The same proof applies, for every relation $x \leq y$, to the
  sequence of coefficients $(Z^{\times_q n}_{x,y})_{n \geq 2}$, which
  is therefore also annihilated by $\Delta_q^{H+1}$. The sequence of
  matrices $(Z^{\times_q n})_{n \geq 2}$ is annihilated by
  $\Delta_q^{H+1}$ acting by \eqref{delta_on_seq}.

  Now consider the sequence $\Delta^{H+1} (Z^{\times_q n})_{n \geq N}$
  for some $N \in \ZZ$. This is
  \begin{equation}
    \Delta^{H+1} (Z^{\times_q n})_{n\geq N} = Z^{\times_q (N - 2)} \times_q \Delta^{H+1} (Z^{\times_q n} )_{n \geq 2} = 0. 
  \end{equation}

  It follows that the whole sequence $(Z^{\times_q n})_{n \in \ZZ}$ is
  annihilated by $\Delta_q^{H+1}$. Hence every sequence of
  coefficients of fixed index $(x,y)$ is a polynomial of degree at
  most $H$ evaluated at $[n]_q$, and in particular the corner
  coefficient coincides with the $q$-Zeta polynomial of $P$.
\end{proof}

Note that this statement also implies \Cref{valeur_moins1}.

\bibliographystyle{plain}
\bibliography{q-zeta-polynomial.bib}

\end{document}